\newcommand{\address}[1]{\gdef\@address{#1}}
\newcommand{\email}[1]{\gdef\@email{\url{#1}}}
\newcommand{\@endstuff}{\par\vspace{\baselineskip}\noindent\small
\begin{tabular}{@{}l}\scshape\@address\\\textit{E-mail address:} \@email\end{tabular}}
\newcommand{\rvline}{\hspace*{-\arraycolsep}\vline\hspace*{-\arraycolsep}}
\newtheorem{theorem}{Theorem}[section]
\newtheorem{lemma}[theorem]{Lemma}
\newtheorem{proposition}[theorem]{Proposition}
\theoremstyle{definition}
\newtheorem{example}[theorem]{Example}
\theoremstyle{definition}
\newtheorem{definition}[theorem]{Definition}
\theoremstyle{definition}
\newtheorem{remark}[theorem]{Remark}
\DeclareMathOperator{\diver}{div}
\DeclareMathOperator{\ric}{Ric}
\DeclareMathOperator{\tr}{tr}
\DeclareMathOperator{\diag}{diag}
\DeclareMathOperator{\generated}{span}
\DeclareMathOperator{\spec}{Sp}
\DeclareMathOperator{\re}{Re}
\DeclareMathOperator{\img}{Im}
\title{Asymptotics of Solutions to Silent Wave Equations}
\author{Andrés Franco Grisales}
\date{}
\address{Department of Mathematics, KTH, 100 44 Stockholm, Sweden}
\email{anfg@kth.se}
\begin{document}

\maketitle

\begin{abstract}
    We study the asymptotics of solutions to a particular class of systems of linear wave equations, namely, of silent equations. Here, the asymptotics refer to the behavior of the solutions near a cosmological singularity, or near infinity in the expanding direction. Leading order asymptotics for solutions of silent equations were already obtained by Ringström in \cite{hans}. Here we improve upon Ringström's result, by obtaining asymptotic estimates of all orders for the solutions, and showing that solutions are uniquely determined by the asymptotic data contained in the estimates. 
    
    As an application, we then study solutions to the source free Maxwell's equations in Kasner spacetimes near the initial singularity. Our results allow us to obtain an asymptotic expansion for the potential of the electromagnetic field, and to show that the energy density of generic solutions blows up along generic timelike geodesics when approaching the singularity. The asymptotics we study correspond to the heuristics of the BKL conjecture, where the coefficients of the spatial derivative terms of the equations are expected to be small, and thus these terms are neglected in order to obtain the asymptotics.
\end{abstract}

\section{Introduction}

In the present work, we are concerned with the study of solutions to systems of linear wave equations in cosmological spacetimes. Specifically, the asymptotic behavior of such solutions. By cosmological we mean spacetimes with compact Cauchy surfaces. By asymptotics, we mean the behavior of the solutions as they approach a big bang/big crunch type singularity, or as they approach infinity in the expanding direction. 

In \cite{hans}, Ringström studies a specific class of wave equations with the objective of deriving optimal energy estimates and leading order asymptotics for the solutions. Here we shall focus on what Ringström calls \emph{silent equations}. Intuitively, these can be thought of as equations such that for the associated geometry, observers lose the ability to communicate at late times. In terms of the equations themselves, these are such that the coefficients of the spatial derivative terms decay exponentially, after rewriting the equations in terms of an appropriate time coordinate. This suggests that solutions can be approximated by dropping the spatial derivative terms from the equations. This results in a system of ODEs for each point in space, which is a greatly simplified situation. This kind of behavior is sometimes referred to as \emph{asymptotically velocity term dominated} (AVTD) in the literature.

In \cite{hans}, Ringström proved leading order asymptotic estimates for solutions of silent equations, which confirms that this heuristic picture is correct in this setting. We show that his results concerning the asymptotics can be substantially improved, at least upon allowing for further loss of regularity in the estimates. In what follows, we derive asymptotic estimates of all orders for solutions to silent equations; moreover, we show that solutions are uniquely determined by the asymptotic data in these estimates. As an application of our results, we then analyze the asymptotic behavior of solutions to the source free Maxwell's equations on Kasner spacetimes near the initial singularity.  

In the context of cosmology, such results are of great interest because of the BKL conjecture, due to Belinskiĭ, Khalatnikov and Lifschitz. This conjecture attempts to describe the generic behavior of solutions to Einstein's equations which present a spacelike cosmological singularity; see \cite{KL,bkl1,bkl2,BK-electromagnetic,BKeffect}, and \cite{heinzle-uggla-rohr} for a recent improvement. The conjecture states that near the initial singularity, observers are expected to lose the ability to communicate; that is, there is silence asymptotically. Furthermore, it proposes that the spatial derivative terms in Einstein's equations become negligible, thus it should be possible to approximate solutions by dropping these terms from the equations. Even though we only consider linear equations, our results present a step forward in this direction. Regarding the importance of the Kasner spacetimes, the conjecture also states that the time evolution of the spacetime, according to Einstein's equations, is chaotic, and described by a sequence of generalized Kasner spacetimes; see \cite{bkl1}. This motivates the use of Kasner spacetimes for the application of our results.

Another important problem in general relativity is the Strong Cosmic Censorship conjecture, which states that for generic initial data \footnote{The necessity of using generic initial data in the statement of the conjecture, arises because of the existence of maximal globally hyperbolic developments which admit inequivalent extensions; see \cite{cauchyproblem}.} for Einstein's equations, the maximal globally hyperbolic development is inextendible; see \cite{chrusciel}, and \cite{penrose} for the original version. By the Hawking singularity theorem (see Theorems 14.55A and 14.55B in \cite{oneill}), cosmological solutions to Einstein's equations typically present causal geodesic incompleteness. One method that has been used prove strong cosmic censorship for particular classes of cosmological solutions, is to prove that curvature invariants, such as the Kretschmann scalar $R^{\alpha \beta \gamma\delta} R_{\alpha \beta \gamma \delta}$, blow up along the incomplete causal geodesics. See for instance \cite{hans-scc, chrusciel-isenberg-moncrief, radermacher}. Note that such a statement ensures $C^2$ inextendibility of the spacetime. Motivated by this approach, in the application of our results to Maxwell's equations in Kasner spacetimes, we show that the energy density of generic solutions blows up along generic past directed timelike geodesics as they approach the initial singularity.   

There are several related results in the literature. As already mentioned, there are the results of \cite{hans}, which are our main reference. In \cite{hans2021wave}, Ringström studies asymptotics of systems of linear wave equations in silent big bang backgrounds which are not necessarily spatially homogeneous. In \cite{hans2019}, he studies solutions to the Klein-Gordon equation on Bianchi backgrounds with silent singularities. In \cite{alho}, Ahlo, Fournodavlos and Franzen study the asymptotics of the scalar wave equation near the initial singularity of vacuum FLRW spacetimes and Kasner spacetimes with $\mathbb{T}^3$ spatial topology. The scalar wave equation in Kasner spacetimes is also studied by Petersen in \cite{oliver}.

Equations where it is possible to neglect spatial derivative terms to obtain asymptotics come up, for instance, in the work of Fournodavlos, Rodnianski and Speck in \cite{fournodavlos-rodnianski-speck}, where they prove stable big bang formation around generalized Kasner solutions; in particular, they show that (nonlinear) Einstein's equations for the perturbed solutions present said behavior. In the context of Einstein's equations, this picture is also observed in the work of Andersson and Rendall in \cite{andersson-rendall}, where they study quiescent cosmological singularities in the analytic setting; and more recently in \cite{quiescent}, where Oude Groeniger, Petersen and Ringström study quiescent big bang singularities without the analyticity assumption. There is also the result by Fournodavlos and Luk, where they prove existence of solutions to the Einstein vacuum equations given initial data on the singularity, which also present said behavior; see \cite{fournodavlos-luk}. For further results where solutions are uniquely determined through asymptotic data see \cite{allen-rendall, andersson-rendall}.

\subsection{Silent equations} \label{geometric conditions}

In what follows, we work within the setting introduced in \cite{hans}. For convenience, we use the same notation and terminology as the one used therein. We consider systems of linear wave equations of the form 
\begin{equation} \label{pde}
    -g^{\mu \nu}(t) \partial_{\mu} \partial_{\nu} u - \sum_{r = 1}^R a_r^{-2}(t) \Delta_{g_r} u + \alpha(t) \partial_t u + X^j(t) \partial_j u + \zeta(t) u = f.
\end{equation}
Here $g^{\mu \nu}$, $a_r \in C^{\infty}(I, \mathbb{R})$ for $\mu, \nu = 0, \ldots, d$ and $r = 1, \ldots, R$ with $0 \leq d, R \in \mathbb{Z}$, where $I = (t_-,t_+)$ is an open interval. Also, $-g^{00}$ and $a_r$ take values in $(0, \infty)$ and for each $t \in I$, the $g^{jl}$ for $j,l = 1, \ldots, d$ are the components of a positive definite matrix. $(M_r, g_r)$ are closed Riemannian manifolds and $\Delta_{g_r}$ is the Laplace-Beltrami operator on $(M_r, g_r)$. Furthermore,  $u, f \in C^{\infty}(M, \mathbb{C}^m)$, where $1 \leq m \in \mathbb{Z}$; and $M := I \times \bar{M}$, where
\begin{equation} \label{manifold}
    \bar{M} := \mathbb{T}^d \times M_1 \times \cdots \times M_R.
\end{equation}
The $\partial_j$ are the standard coordinate vector fields in $\mathbb{T}^d$; $\partial_0 = \partial_t$ is the $t$ coordinate vector field in $M$; and $\alpha, X^j, \zeta \in C^{\infty}(I, \mathbb{M}_m(\mathbb{C}))$. Note that, if $Y$ denotes a smooth $m \times m$ matrix of vector fields on $M$ and $\xi$ a smooth $m \times m$ matrix valued function on $M$, then \eqref{pde} is of the form
\begin{equation} \label{tensor equation}
    \Box_g u + Y u + \xi u = f
\end{equation}
on $(M,g)$, where the Lorentzian metric $g$ is given by
\[
g = g_{\mu \nu} dx^{\mu} \otimes dx^{\nu} + \sum_r a_r^2 g_r.
\]
Here $x^0 = t$ and $(g_{\mu \nu}) = (g^{\mu \nu})^{-1}$. Moreover, $(M,g)$ is globally hyperbolic and the hypersurfaces
\[
\bar{M}_t := \{t\} \times \bar{M}
\]
for $t \in I$ are Cauchy hypersurfaces. See \cite[Chapter 25]{hans} for a justification of all these statements about $g$. The induced metric on $\bar{M}_t$ is denoted by $\bar{g}_t$ and the second fundamental form $\bar{k}_t$ is given by
\[
\bar{k}_t(X,Y) = g(\nabla_X U, Y),
\]
where $U$ denotes the future pointing unit normal of $\bar{M}_t$ and $\nabla$ is the Levi-Civita connection of $g$. We will often omit the $t$ subscript and just write $\bar{g}$ and $\bar{k}$ for the induced metric and second fundamental form respectively, when obvious from the context. For equations of the form \eqref{tensor equation}, smooth initial data for $u$ and its normal derivative on a Cauchy hypersurface give rise to smooth solutions in the globally hyperbolic spacetime $(M,g)$; for details, see \cite[Theorem~12.19]{cauchyproblem}.

The assumption that the coefficients of \eqref{pde} only depend on $t$ ensures that the equation is separable. Since we are only interested in the cosmological setting, assuming spatial homogeneity is standard. Therefore, this assumption on the coefficients of the equation is natural.

An equation of the form \eqref{pde} can be modified in two ways which result in an equation of the same type. One way is changing the time coordinate and the other is multiplying the equation by a positive smooth function of time. Multiplying by a function of time corresponds to a conformal rescaling of the associated metric $g$. We can use these two facts to ``normalize" the geometry associated with \eqref{pde}.

A metric like $g$ can always be written in the form 
\[
g = -N^2 dt \otimes dt + g_{ij} (\chi^i dt + dx^i) \otimes (\chi^j dt + dx^j) + \sum_r a_r^2 g_r,
\]
where $N$ is called the \emph{lapse function} and $\chi := \chi^i \partial_i$ is called the \emph{shift vector field}. In the cosmological setting, it is natural to expect the spacetime to be either expanding or contracting. Specifically, let $V(t)$ denote the volume of $\bar{M}_t$ with respect to $\bar{g}_t$. Then we expect that there is some $t_0 \in I$, such that $\partial_t V(t)$ is either positive or negative for $t \geq t_0$, so that $V(t)$ is strictly increasing or decreasing for $t \geq t_0$. If that is the case, we can introduce a new time coordinate
\[
\tau(t) := \pm \ln \frac{V(t)}{V(t_0)},
\]
where we choose the plus sign if $V(t)$ is increasing and the minus sign otherwise. Note that, if we assume $V(t) \to \infty$ or $V(t) \to 0$ as $t \to t_+$, the interval $[t_0,t_+)$ in $t$ time corresponds to $[0,\infty)$ in $\tau$ time. Furthermore,
\[
U = \frac{1}{N}(\partial_t - \chi), \quad U(\ln V) = \tr_{\bar{g}} \bar{k};
\]
cf. \cite[(25.19), p. 471]{hans}. Hence, the mean curvature $\tr_{\bar{g}} \bar{k}$ is strictly positive or negative for $t \geq t_0$ and we can consider the conformally rescaled metric $\hat{g} := (\tr_{\bar{g}} \bar{k})^2 g$ on the interval $[0,\infty)$ in $\tau$ time. $\hat{g}$ can then be written as
\[
\hat{g} = -d\tau \otimes d\tau + \hat{g}_{ij}(\hat{\chi}^i d\tau + dx^i) \otimes (\hat{\chi}^j d\tau + dx^j) + \sum_r \hat{a}_r^2 g_r;
\]
see Lemma~1.14 in \cite{hans}. Motivated by these remarks, we introduce the following definition; cf. Definition~1.18 in \cite{hans}.

\begin{definition} \label{canonical}
A \emph{canonical separable cosmological model manifold} is a Lorentzian manifold $(M,g)$ such that $M = I \times \bar{M}$, where $0 \in I = (t_{-}, \infty)$, $\bar{M}$ is given by \eqref{manifold} and
\[
g = - dt \otimes dt + g_{ij} (\chi^i dt + dx^i) \otimes (\chi^j dt + dx^j) + \sum_r a_r^2 g_r.
\]
\end{definition}

The fact that $N = 1$ implies that $g^{00} = -1$, see \cite[Lemma 25.2]{hans} for details. From now on, we shall always assume that $(M,g)$ is a canonical separable cosmological model manifold; that is, we think of the metric $g$ as resulting from a change of time coordinate and a conformal rescaling as we just described.

\begin{example}
    The Kasner spacetimes are of the form $((0, \infty) \times \mathbb{T}^n,g)$, where 
    \[
    g = -dt \otimes dt + \sum_i t^{2p_i} dx^i \otimes dx^i,
    \]
    and the $p_i$ are constants. We assume that they are vacuum, which is equivalent to 
    \[
    \sum_i p_i = \sum_i p_i^2 = 1.
    \]
    These are usually called the \emph{Kasner relations}. The Kasner spacetimes are canonical separable cosmological model manifolds. 

    Another example is de Sitter spacetime with cosmological constant $\Lambda > 0$, which is the spacetime $(\mathbb{R} \times S^3,g)$ where
    \[
    g = -dt \otimes dt + \cosh^2(Ht) g_{S^3_H},
    \]
    $H = (\Lambda/3)^{1/2}$, and $g_{S^3_H}$ is the standard metric on the 3-sphere $S^3$ with radius such that $\ric(g_{S^3_H}) = 2H^2g_{S^3_H}$.

    We remark that, even though these examples are already in the form of a canonical separable cosmological model manifold, we expect that in order for our results to be applicable it will be necessary to make a change of time coordinate and a conformal rescaling as described above. This transformation of the metric will of course result again in a canonical separable cosmological model manifold.
\end{example}

We are interested in equations for which the coefficients of the spatial derivative terms decay exponentially, after making a change of time coordinate as described above. Even though this could be directly imposed on the relevant coefficients, it is preferable to formulate geometric conditions, and then to deduce the exponential decay from the geometric conditions. For that purpose, we introduce the following definition.

\begin{definition} \label{c1 silent}
    Consider \eqref{pde} and assume that $(M,g)$ is a canonical separable cosmological model manifold. Suppose there is a $0 < \eta_{\text{sh}, 0} \leq 1$ such that 
    \begin{equation} \label{uniformly timelike}
        g(\partial_t, \partial_t) \leq -\eta_{\text{sh}, 0}^2
    \end{equation}
    for $t \geq 0$, and a $0 < \mu \in \mathbb{R}$ and a continuous non-negative function $\mathfrak{e} \in L^1([0,\infty))$ such that
    \begin{equation} 
        \bar{k} \geq (\mu - \mathfrak{e}) \bar{g}, \quad |\chi|_{\bar{g}} |\dot{\chi}|_{\bar{g}} \leq \mathfrak{e}
    \end{equation}
    for $t \geq 0$, where $\dot{\chi} = \mathcal{L}_U \chi$ is the Lie derivative of $\chi$ in the direction of $U$ \footnote{Since $N = 1$, then $\dot{\chi}$ is also tangent to $\bar{M}_t$, so that we can apply $\bar{g}_t$ to it.}. Then \eqref{pde} is said to be \emph{$C^1$-silent}.
\end{definition}

The motivation for the use of the word silent is that, the lower bound on $\bar{k}$ implies that observers lose the ability to communicate at late times. For a justification, see \cite[Lemma~25.23, Remark~25.24]{hans}. If \eqref{pde} is $C^1$-silent, then the coefficients of the spatial derivative terms, except the $X^j$, decay exponentially (see Lemma~\ref{silent lemma} below). Now we move on to the conditions on the remaining terms. Let us introduce some notation. Let
\[
\bar{h} := g_{ij} dx^i \otimes dx^j.
\]
Then the $\bar{h}(t)$ comprise a family of metrics on $\mathbb{T}^d$. Also let
\[
|\mathcal{X}|_{\bar{h}} := \left( \sum_{\varsigma \in \mathfrak{S}} \sum_{i,j} \bar{h}_{ij} \varsigma_i \|X^i\| \varsigma_j \|X^j\| \right)^{1/2},
\]
where $\mathfrak{S}$ is the set of $\varsigma \in \mathbb{R}^d$ with components $1$ or $-1$ and $\mathcal{X} := X^j \partial_j$.

\begin{definition} \label{c0 future bounded}
    If there is a constant $0 < C_0 \in \mathbb{R}$ such that
    \[
    |\mathcal{X}|_{\bar{h}} \leq C_0
    \]
    for $t \geq 0$, then $\mathcal{X}$ is said to be \emph{$C^0$-future bounded}.
\end{definition}

This condition is required to prevent solutions to \eqref{pde} from growing super exponentially; for details, see \cite[Chapter~2]{hans}. In particular, if \eqref{pde} is also $C^1$-silent, then the $X^j$ decay exponentially. Finally, regarding $\alpha$ and $\zeta$, we require that they converge to some constant matrices.

\begin{definition} \label{weak convergence}
    If there exist $\alpha_{\infty}, \zeta_{\infty} \in \mathbb{M}_m(\mathbb{C})$ and constants $\eta_{\text{mn}} > 0$ and $C_{\text{mn}}$ such that
    \[
    \|\alpha(t) - \alpha_{\infty}\| + \|\zeta(t) - \zeta_{\infty}\| \leq C_{\text{mn}} e^{-\eta_{\text{mn}} t}
    \]
    for $t \geq 0$, then Equation \eqref{pde} is said to be \emph{weakly convergent} \footnote{The reason for using the word weakly in this definition, is that in \cite{hans} similar conditions are also introduced, which in addition present bounds on the first derivatives of the objects of interest.}.
\end{definition}

Let \eqref{pde} be $C^1$-silent and weakly convergent, and assume $\mathcal{X}$ is $C^0$-future bounded. Then we expect to be able to approximate solutions to \eqref{pde} with solutions to
\[
u_{tt} + \alpha_{\infty} u_t + \zeta_{\infty} u = f,
\]
or equivalently
\begin{equation} \label{model equation 2}
\partial_t
\begin{pmatrix}
    u\\
    u_t
\end{pmatrix} = A_{\infty}
\begin{pmatrix}
    u\\
    u_t
\end{pmatrix} +
\begin{pmatrix}
    0\\
    f
\end{pmatrix}
\end{equation}
where
\[
A_{\infty} := 
\begin{pmatrix}
0 & I_m\\
-\zeta_{\infty} & -\alpha_{\infty}
\end{pmatrix}.
\]
The decay rate of the coefficients of \eqref{pde} that are dropped to get to \eqref{model equation 2} is given by 
\[
\beta_{\text{rem}} := \min\{ 
\mu, \eta_{\text{mn}} \};
\]
see Lemma~\ref{silent lemma} below and the comments that follow. If we assume for a moment that $f = 0$, then smooth solutions to \eqref{model equation 2} are of the form $e^{A_{\infty}t}V$ where $V \in C^{\infty}(\bar{M}, \mathbb{C}^{2m})$. In order to describe the asymptotics, we need a way to relate $\beta_{\text{rem}}$ with the growth or decay rate of $e^{A_{\infty}t}V$. This is because we need a way to determine when the latter is asymptotically larger that the error terms. The way to accomplish this is by referring to the generalized eigenspaces of $A_{\infty}$. This motivates the following definition.  

\begin{definition} \label{eigenspaces}
    If $\lambda$ is an eigenvalue of $A \in \mathbb{M}_k(\mathbb{C})$, the generalized eigenspace of $A$ associated to $\lambda$ is 
    \[
    E_{\lambda} = \ker(A - \lambda I)^{k_{\lambda}}
    \]
    where $k_{\lambda}$ is the algebraic multiplicity of $\lambda$. Let $\spec A$ be the set of eigenvalues of $A$ and define
    \[
    \kappa_{\text{max}}(A) := \sup\{ \re \lambda \, | \, \lambda \in \spec A \}, \quad \kappa_{\text{min}}(A) := \inf\{ \re \lambda \, | \, \lambda \in \spec A \}.
    \]
    Given $0 < \beta \in \mathbb{R}$, let $\mathcal{N}$ be the smallest non-negative integer such that
    \[
    \kappa_{\text{max}}(A) - \kappa_{\text{min}}(A) < \mathcal{N}\beta.
    \]
    For every positive integer $n \leq \mathcal{N}$, define $E^n_{A,\beta}$ as the direct sum of the generalized eigenspaces $E_{\lambda}$ such that 
    \[
    \text{Re} \lambda \in (\kappa_1 - n\beta, \kappa_1 - (n-1)\beta], 
    \]
    where $\kappa_1 := \kappa_{\text{max}}(A)$. Note that $\mathbb{C}^k = E_{A,\beta}^1 \oplus \cdots \oplus E_{A,\beta}^{\mathcal{N}}$.
\end{definition}

Finally, regarding the function $f$, we assume that for every $s \in \mathbb{R}$, 
    \begin{equation} \label{f condition 2}
        \|f\|_{A, s} := \int_0^{\infty} e^{-\kappa_1 \tau} \|f(\tau)\|_{(s)} d\tau < \infty.
    \end{equation}
Here $\|\cdot\|_{(s)}$ denotes the Sobolev norms for functions on $\bar{M}$; see \eqref{sobolev norm} below for details. This assumption is made to prevent $f$ from dominating the asymptotics.

\subsection{Results} \label{results}

Before stating the results, define $\mathcal{N}$ and $\kappa_1$ as in Definition~\ref{eigenspaces} with respect to $A = A_{\infty}$ and $\beta = \beta_{\text{rem}}$. For $1 \leq n \leq \mathcal{N}$, let $E^n = E_{A_{\infty,\beta_{\text{rem}}}}^n$. Also define
\[
D := 
\begin{pmatrix}
0 & \, 0\\
g^{jl}\partial_j \partial_l + \sum_r a_r^{-2} \Delta_{g_r} - X^j\partial_j + \zeta_{\infty} - \zeta & \, 2g^{0l} \partial_l + \alpha_{\infty} - \alpha
\end{pmatrix}.
\]
Note that the differential operator $D$ corresponds to the terms in \eqref{pde} that are dropped to arrive at \eqref{model equation 2}. For $\xi \in \mathbb{C}^k$, we use the notation $\langle \xi \rangle := (1 + |\xi|^2)^{1/2}$. Now we can state our main results regarding the asymptotics of solutions to silent equations.

\begin{theorem} \label{pde asymptotic theorem}
    Let \eqref{pde} be $C^1$-silent and weakly convergent, let $\mathcal{X}$ be $C^0$-future bounded and assume that \eqref{f condition 2} holds. Then for every integer $n \geq 1$, there are non-negative constants $C$, $N$, $s_{\emph{hom}, n}$ and $s_{\emph{ih}, n}$, depending only on the coefficients of the equation, such that the following holds. If $u$ is a smooth solution to \eqref{pde}, there is a unique $V_{\infty,n} \in C^{\infty}(\bar{M},E^1 \oplus \cdots \oplus E^n)$ (if $n > \mathcal{N}$, then $V_{\infty,n} \in C^{\infty}(\bar{M},\mathbb{C}^{2m})$) such that
    \begin{equation} \label{pde estimate 2}
    \begin{split}
        \bigg\| 
        &\begin{pmatrix}
            u(t)\\
            u_t(t)
        \end{pmatrix} - F_{\infty, n}(t) \bigg\|_{(s)}\\
        &\leq C \langle t \rangle^N e^{(\kappa_1 - n\beta_{\emph{rem}})t} (\|u_t(0)\|_{(s + s_{\emph{hom}, n})} + \|u(0)\|_{(s + s_{\emph{hom}, n} + 1)} + \|f\|_{A, s + s_{\emph{ih}, n}})
    \end{split}
    \end{equation}
    for $t \geq 0$, where $F_{\infty,n}$ is given by the recursive formula
    \[
    F_{\infty,n}(t) = e^{A_{\infty}t} V_{\infty,n} + \int_0^t e^{A_{\infty} (t-\tau)} D F_{\infty,n-1}(\tau) d\tau + \int_0^t e^{A_{\infty}(t-\tau)}
        \begin{pmatrix}
            0\\
            f(\tau)
        \end{pmatrix}d\tau  
    \]
    and $F_{\infty,0} = 0$. Furthermore, 
    \begin{equation} \label{inverse continuity}
        \|V_{\infty,n}\|_{(s)} \leq C(\|u_t(0)\|_{(s + s_{\emph{hom}, n})} + \|u(0)\|_{(s + s_{\emph{hom}, n} + 1)} + \|f\|_{A, s + s_{\emph{ih}, n}}).
    \end{equation}
\end{theorem}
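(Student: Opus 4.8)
The plan is to separate variables, turning \eqref{pde} into a family of ODE systems indexed by the joint eigenfunctions of the Laplacians on $\bar M$, to prove the ODE version of the statement by induction on $n$, and to reassemble the result in the Sobolev norms $\|\cdot\|_{(s)}$. For the reduction, expand $u$ and $f$ in the $L^2(\bar M)$-orthonormal basis built from the Fourier characters on $\mathbb T^d$ and eigenfunctions of $-\Delta_{g_r}$ on the $M_r$, and let $\iota$ denote the corresponding multi-index, with $\langle\iota\rangle^2$ the associated eigenvalue parameter (the weight appearing in $\|\cdot\|_{(s)}$). Writing $X_\iota := (u_\iota,\partial_t u_\iota)$, equation \eqref{pde} becomes, for each $\iota$,
\[
\partial_t X_\iota = \bigl(A_\infty + B_\iota(t)\bigr) X_\iota + \begin{pmatrix} 0 \\ f_\iota \end{pmatrix},
\]
where $B_\iota(t)$ acts on the $\iota$-th Fourier coefficient exactly as $D$ acts in physical space. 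From $C^1$-silence, weak convergence and $C^0$-future boundedness --- concretely from Lemma~\ref{silent lemma} --- one gets $\|B_\iota(t)\| \le C\langle\iota\rangle^2 e^{-\beta_{\mathrm{rem}} t}$ for $t\ge 0$, with $C$ depending only on the coefficients; the combination of exponential decay in $t$ with a \emph{fixed} polynomial weight in $\iota$ is what drives the whole argument. The base of the induction is the crude estimate $\|X_\iota(t)\| \le C\langle\iota\rangle^{q_0}\langle t\rangle^{N_0} e^{\kappa_1 t}\,\mathcal D_\iota$, where $\mathcal D_\iota := \|X_\iota(0)\| + \int_0^\infty e^{-\kappa_1\tau}|f_\iota(\tau)|\,d\tau$; this is the mode-wise form of the energy estimates and leading-order asymptotics of \cite{hans}, which I would reprove only if necessary, via a weighted energy of the form $\|\partial_t u_\iota\|^2 + (\text{positive potential})\|u_\iota\|^2$ whose logarithmic derivative is bounded uniformly in $\iota$, followed by a bootstrap to sharpen the exponent to $\kappa_1$.

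For the induction step, suppose $V_{\infty,1},\dots,V_{\infty,n-1}$, hence $F_{\infty,n-1}$, have been built, and set $R_{n-1} := X_\iota - F_{\infty,n-1,\iota}$, with the inductive bound $\|R_{n-1}(t)\| \le C\langle\iota\rangle^{q_{n-1}}\langle t\rangle^{N_{n-1}} e^{(\kappa_1-(n-1)\beta_{\mathrm{rem}})t}\mathcal D_\iota$. Subtracting the Duhamel formula for $X_\iota$ from the recursion defining $F_{\infty,n}$ yields
\[
R_n(t) = e^{A_\infty t}\bigl(X_\iota(0) - V_{\infty,n,\iota}\bigr) + \int_0^t e^{A_\infty(t-\tau)} B_\iota(\tau) R_{n-1}(\tau)\, d\tau .
\]
Decompose $\mathbb C^{2m} = E_{\mathrm{hi}} \oplus E_{\mathrm{lo}}$, where $E_{\mathrm{hi}} = E^1\oplus\cdots\oplus E^n$ collects the generalized eigenspaces of $A_\infty$ with $\re\lambda > \kappa_1 - n\beta_{\mathrm{rem}}$ and $E_{\mathrm{lo}}$ the rest, with projections $\Pi_{\mathrm{hi}},\Pi_{\mathrm{lo}}$ commuting with $e^{A_\infty t}$, and recall the standard bounds $\|e^{A_\infty s}\Pi\| \le C\langle s\rangle^p e^{\kappa s}$ ($p$ fixed by the Jordan blocks of $A_\infty$, $\kappa$ the appropriate extreme real part on the range of $\Pi$). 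On $E_{\mathrm{lo}}$ the exponents in the integral cancel, so that piece has size $C\langle\iota\rangle^{q_{n-1}+2}\langle t\rangle^{N_{n-1}+p+1} e^{(\kappa_1-n\beta_{\mathrm{rem}})t}\mathcal D_\iota$. On $E_{\mathrm{hi}}$ write $\int_0^t = \int_0^\infty - \int_t^\infty$: the integrand $\Pi_{\mathrm{hi}} e^{-A_\infty\tau} B_\iota(\tau) R_{n-1}(\tau)$ is exponentially integrable on $[0,\infty)$ (since $\re\lambda - \kappa_1 + n\beta_{\mathrm{rem}} > 0$ on $E_{\mathrm{hi}}$), so $W_{n,\iota} := \int_0^\infty \Pi_{\mathrm{hi}} e^{-A_\infty\tau} B_\iota(\tau) R_{n-1}(\tau)\,d\tau \in E_{\mathrm{hi}}$ exists with $\|W_{n,\iota}\| \le C\langle\iota\rangle^{q_{n-1}+2}\mathcal D_\iota$, and the tail $\int_t^\infty \Pi_{\mathrm{hi}} e^{A_\infty(t-\tau)} B_\iota(\tau) R_{n-1}(\tau)\,d\tau$, estimated via $\|\Pi_{\mathrm{hi}} e^{A_\infty s}\| \le C\langle s\rangle^p e^{\kappa^{\ast\ast}s}$ for $s\le 0$ with $\kappa^{\ast\ast} := \min\{\re\lambda : \lambda\in\spec A_\infty,\ \re\lambda > \kappa_1-n\beta_{\mathrm{rem}}\}$, is again $O(\langle\iota\rangle^{q_{n-1}+2}\langle t\rangle^{N_{n-1}} e^{(\kappa_1-n\beta_{\mathrm{rem}})t}\mathcal D_\iota)$. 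Taking $V_{\infty,n,\iota} := \Pi_{\mathrm{hi}} X_\iota(0) + W_{n,\iota}$ then cancels the $e^{A_\infty t}$-terms exactly, leaving $R_n = e^{A_\infty t}\Pi_{\mathrm{lo}} X_\iota(0) + (\text{tail}) + \Pi_{\mathrm{lo}}\!\int_0^t(\cdots)$, all of the claimed size; this closes the induction with $q_n = q_{n-1}+2$ and $N_n = N_{n-1}+p+1$. Uniqueness follows mode by mode: two admissible data differ by some $v\in E_{\mathrm{hi}}$ with $\|e^{A_\infty t}v\| = O(\langle t\rangle^N e^{(\kappa_1-n\beta_{\mathrm{rem}})t})$, forcing $v=0$ because every nonzero vector of $E_{\mathrm{hi}}$ has $e^{A_\infty t}$-growth rate strictly above $\kappa_1 - n\beta_{\mathrm{rem}}$.

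For the reassembly, all constants have the form $C\langle\iota\rangle^{q_n}$ with $C$, $q_n$, $N_n$ depending only on the coefficients and $n$, so squaring the mode-wise estimate, weighting by $\langle\iota\rangle^{2s}$ and summing over $\iota$ gives \eqref{pde estimate 2}, with $s_{\mathrm{hom},n} = q_n$, $s_{\mathrm{ih},n}$ a similar expression, and the extra ``$+1$'' on $\|u(0)\|$ recording that the natural energy weights $u$ by one derivative more than $u_t$; \eqref{inverse continuity} is the corresponding sum for $V_{\infty,n}$, the inhomogeneous contribution being controlled by $\|f\|_{A,s}$ through Minkowski's integral inequality. Since $u$ is smooth and \eqref{f condition 2} holds, $\mathcal D_\iota$ decays faster than any power of $\langle\iota\rangle$, so the fixed power $\langle\iota\rangle^{q_n}$ is harmless and $V_{\infty,n}\in C^\infty(\bar M, E^1\oplus\cdots\oplus E^n)$ (respectively $C^\infty(\bar M,\mathbb C^{2m})$ when $n>\mathcal N$); smoothness of $F_{\infty,n}$ is inherited inductively, the coefficients of $D$ being smooth in $t$ and the defining integrals running over compact $\tau$-intervals. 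I expect the main obstacle to be the bookkeeping in the induction step --- keeping the $\langle\iota\rangle$-powers polynomial so that every Sobolev sum converges, while simultaneously sharpening the exponent to $\kappa_1 - n\beta_{\mathrm{rem}}$ at each stage and pinning down the asymptotic datum inside $E^1\oplus\cdots\oplus E^n$ through the convergent integral $W_{n,\iota}$; should the leading-order result of \cite{hans} not be available in precisely the form required, the real difficulty moves to establishing that $\iota$-uniform energy estimate.
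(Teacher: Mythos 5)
Your proposal is correct and follows essentially the same route as the paper: Fourier decomposition into mode ODEs, the bound $\|A_{\mathrm{rem}}(t)\|\leq C\langle\nu(\iota)\rangle^{2}e^{-\beta_{\mathrm{rem}}t}$, an induction that improves the asymptotic order by one step via Duhamel's formula with the new datum defined through a convergent integral on the part of the spectrum with $\re\lambda>\kappa_1-n\beta_{\mathrm{rem}}$ plus a tail estimate, mode-by-mode uniqueness from growth rates, and reassembly in the $\|\cdot\|_{(s)}$ norms with smoothness of $V_{\infty,n}$ coming from rapid decay of the mode data. The differences are cosmetic: you use spectral projections of $A_{\infty}$ where the paper's Lemma~\ref{asymptotic lemma} uses the Jordan-form change of variables $w=e^{-\kappa_1 t}R(\bar t)T^{-1}v$, and you start the induction from the crude $\iota$-uniform energy estimate of \cite{hans} (which indeed cannot be obtained by naive Gr\"onwall from the $\langle\nu(\iota)\rangle^{2}$-weighted bound alone, but is exactly the input the paper also imports, via [Lemma 10.17, \cite{hans}], for its $n=1$ base case).
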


\begin{remark} \label{2nd component derivative of first}
    Note that
    \[
    e^{A_{\infty}t} V_{\infty,n} + \int_0^t e^{A_{\infty}(t-\tau)}
        \begin{pmatrix}
            0\\
            f(\tau)
        \end{pmatrix}d\tau 
    \]
    is a solution to \eqref{model equation 2}. Moreover, note that $F_{\infty,n}$ is a solution to
    \[
    \partial_t
    \begin{pmatrix}
        u\\
        v
    \end{pmatrix} =
    A_{\infty}
    \begin{pmatrix}
        u\\
        v
    \end{pmatrix} + DF_{\infty, n-1} + 
    \begin{pmatrix}
        0\\
        f
    \end{pmatrix},
    \]
    where $u$ and $v$ take values in $\mathbb{C}^m$. In particular, because of the form of $A_{\infty}$ and $D$, the first $m$ equations of the system above reduce to $\partial_t u = v$. Thus the last $m$ components of $F_{\infty,n}$ are the $\partial_t$ derivatives of the first $m$ components.
\end{remark}

\begin{remark}
    More information on the dependence of the constants $s_{\text{hom},n}$, $s_{\text{ih},n}$ and $N$ appearing in the estimates can be found in Remark~\ref{dependence of the constants} below.
\end{remark}

This result is a direct consequence of Theorem~\ref{pde asymptotic lemma} and Lemma~\ref{silent lemma} below. The results of \cite{hans} regarding the asymptotics, correspond to the case $n = 1$ in Theorem~\ref{pde asymptotic theorem}. The improvement provided by Theorem~\ref{pde asymptotic theorem} can be viewed as allowing one to construct a full asymptotic expansion for the solution. The advantage of this improvement becomes most apparent when considering actual systems of equations, as opposed to individual equations. In that case, the results of \cite{hans} might provide explicit asymptotic information about only one of the components of the solution, while only yielding estimates for the rest. By using Theorem~\ref{pde asymptotic theorem} with $n$ large enough, it should be possible to obtain detailed asymptotics for all the components of the solution. This aspect will become clear in our study of Maxwell's equations in Section~\ref{maxwells equations}.

We are also interested in knowing whether it is possible to specify the asymptotic data contained in the above estimates. This data is contained in the functions $V_{\infty,n}$ for $1 \leq n \leq \mathcal{N}$. For that purpose, we introduce the following definition.

\begin{definition}
    Let \eqref{pde} be $C^1$-silent and weakly convergent, let $\mathcal{X}$ be $C^0$-future bounded and assume that \eqref{f condition 2} holds. Let $u$ be a solution to \eqref{pde}. Define $\mathcal{V}_{\infty} \in C^{\infty}(\bar{M}, \mathbb{C}^{2m})$ by
    \[
    \mathcal{V}_{\infty} := V_{\infty,1} + \pi_2(V_{\infty,2}) + \cdots + \pi_{\mathcal{N}}(V_{\infty,\,\mathcal{N}})
    \]
    where $\pi_n$ is the projection onto $E^n$. Then $\mathcal{V}_{\infty}$ is called \emph{asymptotic data} for $u$.
\end{definition}

The following result ensures that solutions to \eqref{pde} are uniquely determined by the asymptotic data. It turns out to be enough to prove the statement in the case $f = 0$. To obtain it, it is necessary to also impose an upper bound on $\bar{k}$. This is because we need to control the growth of the energy when going backwards in time. 

\begin{theorem} \label{homeomorphism theorem}
    Let \eqref{pde} be $C^1$-silent and weakly convergent, and assume $\mathcal{X}$ to be $C^0$-future bounded. Suppose that $f = 0$ and that there is a constant $0 < \mu_+ \in \mathbb{R}$ and a non-negative continuous function $\mathfrak{e}_+ \in L^1([0,\infty))$ such that 
    \[
    \bar{k} \leq (\mu_+ + \mathfrak{e}_+) \bar{g}
    \]
    for $t \geq 0$. Then there is a linear isomorphism
    \[
    \Phi_{\infty}:C^{\infty}(\bar{M},\mathbb{C}^{2m}) \to C^{\infty}(\bar{M},\mathbb{C}^{2m}) 
    \]
    such that, for $\mathcal{V}_{\infty} \in C^{\infty}(\bar{M},\mathbb{C}^{2m})$, 
    \[
    \|\Phi_{\infty}(\mathcal{V}_{\infty})\|_{(s)} \leq C\|\mathcal{V}_{\infty}\|_{(s + \xi_{\infty})},
    \]
    where $C$ and $\xi_{\infty}$ only depend on the coefficients of the equation. Furthermore, if $u$ is the solution of \eqref{pde} such that
    \[
    \begin{pmatrix}
        u(0)\\
        u_t(0)
    \end{pmatrix} = \Phi_{\infty}(\mathcal{V}_{\infty}),
    \]
    then $u$ has asymptotic data given by $\mathcal{V}_{\infty}$.    
\end{theorem}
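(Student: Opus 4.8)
The plan is to construct $\Phi_\infty$ as the composition of the map sending asymptotic data $\mathcal{V}_\infty$ to the full limit object $V_{\infty}$ (the ``higher order'' data appearing in Theorem~\ref{pde asymptotic theorem}) with the map sending that object to Cauchy data at $t=0$. The key structural observation is that the recursive formula for $F_{\infty,n}$ in Theorem~\ref{pde asymptotic theorem} is \emph{triangular} with respect to the decomposition $\mathbb{C}^{2m} = E^1 \oplus \cdots \oplus E^{\mathcal{N}}$: the component $\pi_n(V_{\infty,n})$ of the asymptotic data feeds into $V_{\infty,n}, V_{\infty,n+1},\dots$, and conversely $V_{\infty,n}$ is determined by $\pi_1(V_{\infty,1}),\dots,\pi_n(V_{\infty,n})$ together with the equation. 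So I would first show that the correspondence $\mathcal{V}_\infty \leftrightarrow (V_{\infty,1},\dots,V_{\infty,\mathcal{N}})$ is a linear isomorphism with Sobolev bounds losing only finitely many derivatives, using the uniqueness clause and the estimate \eqref{inverse continuity} from Theorem~\ref{pde asymptotic theorem}; this is where the triangular structure does the work.

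Next I would address the map from the asymptotic data to Cauchy data. Given $\mathcal{V}_\infty$, and hence $V_{\infty,n}$ for each $n$, I want to produce a genuine solution $u$ of \eqref{pde} whose asymptotic data is $\mathcal{V}_\infty$; then $\Phi_\infty(\mathcal{V}_\infty) := (u(0), u_t(0))$. The natural candidate is to build $u$ by solving \eqref{pde} backwards from $t = \infty$, i.e.\ to show that for $f = 0$ there is a solution asymptotic to $F_{\infty,\mathcal{N}}$ in the sense of \eqref{pde estimate 2}. Concretely, one sets up the first-order (in $t$) Duhamel-type integral equation for the difference $w := \binom{u}{u_t} - F_{\infty,\mathcal{N}}$, which satisfies $\partial_t w = A_\infty w + (\text{lower order terms acting on } w) + (\text{remainder from }D F_{\infty,\mathcal{N}} \text{ applied to the tail})$, with zero data prescribed at infinity on the ``growing'' modes. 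The $C^1$-silence hypothesis gives exponential decay of the remainder (via Lemma~\ref{silent lemma}), and the fact that $n = \mathcal{N}$ is large enough that $\kappa_1 - \mathcal{N}\beta_{\text{rem}} < \kappa_{\min}(A_\infty)$ means the forcing decays strictly faster than every mode of $e^{A_\infty t}$, so the integral from $t$ to $\infty$ converges and a contraction/fixed-point argument on a suitable exponentially weighted Sobolev space produces a unique such $w$. Evaluating at $t=0$ gives the Cauchy data and the claimed bound $\|\Phi_\infty(\mathcal{V}_\infty)\|_{(s)} \leq C\|\mathcal{V}_\infty\|_{(s+\xi_\infty)}$.

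It then remains to check that $\Phi_\infty$ is a bijection. Injectivity: if $\Phi_\infty(\mathcal{V}_\infty) = \Phi_\infty(\mathcal{W}_\infty)$ then the two solutions coincide (uniqueness for the Cauchy problem), so they have the same asymptotic data, hence $\mathcal{V}_\infty = \mathcal{W}_\infty$ by the first step. Surjectivity: given arbitrary smooth Cauchy data, solve \eqref{pde} forward to get a solution $u$, read off its asymptotic data $\mathcal{V}_\infty$ via Theorem~\ref{pde asymptotic theorem}, and observe that $u$ and the solution produced by $\Phi_\infty(\mathcal{V}_\infty)$ have the same asymptotic data; by the uniqueness half of Theorem~\ref{pde asymptotic theorem} (which requires knowing the solution is determined by \emph{all} the $V_{\infty,n}$, not just the first) they agree, so the original Cauchy data is in the image. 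This last uniqueness step — that a solution of \eqref{pde} with vanishing asymptotic data must vanish — is exactly where the upper bound $\bar{k} \leq (\mu_+ + \mathfrak{e}_+)\bar{g}$ is used: it controls the growth of the energy going backwards in time, so that a solution which decays faster than any eigenmode of $A_\infty$ at $t = \infty$ cannot have been nonzero at $t = 0$. Linearity of $\Phi_\infty$ is immediate from linearity of the equation and uniqueness throughout.

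\textbf{Main obstacle.} I expect the crux to be the backward construction in the second paragraph: setting up the right exponentially weighted function spaces so that the Duhamel integral from $\infty$ converges on \emph{every} generalized eigenspace of $A_\infty$ simultaneously (the polynomial factors $\langle t\rangle^N$ from nontrivial Jordan blocks must be absorbed), and propagating this through the Sobolev scale while tracking the finite derivative loss $\xi_\infty$. The exponential decay from $C^1$-silence and $C^0$-future boundedness is what makes it possible, but the bookkeeping across the spectral decomposition — and reconciling the backward-constructed solution with the forward-evolved one via the uniqueness statement — is the delicate part, and is presumably why the upper curvature bound must be added as a hypothesis here but not in Theorem~\ref{pde asymptotic theorem}.
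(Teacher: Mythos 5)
Your overall architecture (reconstruct the full expansion $F_{\infty,\mathcal{N}}$ from $\mathcal{V}_\infty$, build a solution realizing it, then deduce bijectivity from uniqueness) is reasonable, and your surjectivity/injectivity bookkeeping is close in spirit to the paper's. But there is a genuine gap at the central construction, and it sits exactly where the extra hypothesis $\bar{k} \leq (\mu_+ + \mathfrak{e}_+)\bar{g}$ is actually needed. The contraction you propose for $w = \binom{u}{u_t} - F_{\infty,\mathcal{N}}$ cannot be run ``on a suitable exponentially weighted Sobolev space'' for the PDE: the perturbation is $Dw$, and $D$ is a second order spatial operator, so on the Fourier side its norm on the $\iota$-th mode is of size $\langle \nu(\iota)\rangle^2 e^{-\beta_{\text{rem}}t}$ (cf. \eqref{arem estimate 2}). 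This is not uniformly small at any fixed time; the Duhamel iteration loses two derivatives per step, and the fixed point closes only mode by mode and only for $t \geq T(\iota)$ with $T(\iota)$ growing like $\ln\langle\nu(\iota)\rangle$ (this is the role of $T_{\text{ode}}$ in the paper). What your proposal then fails to supply is control of the backward evolution of each mode from $T(\iota)$ down to $t = 0$ with at most polynomial growth in $\langle\nu(\iota)\rangle$, so that the mode data sum to a smooth function and the bound $\|\Phi_{\infty}(\mathcal{V}_{\infty})\|_{(s)} \leq C\|\mathcal{V}_{\infty}\|_{(s+\xi_{\infty})}$ holds; you simply ``evaluate at $t=0$.'' That backward control is precisely what $\bar{k} \leq (\mu_+ + \mathfrak{e}_+)\bar{g}$ provides: via Remark~\ref{silent remark} it yields the lower bound \eqref{low silent} on $\dot{\ell}$, which bounds the logarithmic derivative of the mode energies backwards in time and turns the factor $e^{CT(\iota)}$ into $\langle\nu(\iota)\rangle^{s_{\infty}}$ — see the step around \eqref{energy} in Lemma~\ref{specify data pde}. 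You instead assign this hypothesis to the uniqueness step (``vanishing asymptotic data implies zero solution''); but that step is mode-by-mode, needs no uniformity in $\iota$, and requires no extra hypothesis at all — in the paper it follows from the ODE isomorphism, Proposition~\ref{ode iso}. So as written, the construction is unjustified at its crux and the claimed Sobolev estimate for $\Phi_{\infty}$ is not obtained.

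For comparison, the paper avoids the backward-from-infinity fixed point altogether: asymptotic data is specified mode by mode (Lemma~\ref{specify ode data}) by an inductive construction of subspaces of initial data at a sufficiently late time $t_0$, with slope estimates showing that the map to $(n+1)$-th order data is an isomorphism; the modes are then summed (Lemma~\ref{specify data pde}), with \eqref{low silent} used exactly for the $[0,T_{\text{ode}}(\iota)]$ backward energy estimate; and finally the data are corrected order by order (the recursion $D_n = \pi_n(\mathcal{V}_{\infty}) - L_n(\cdots)$ in Theorem~\ref{homeomorphism}) rather than by assembling $F_{\infty,\mathcal{N}}$ from $\mathcal{V}_{\infty}$ in advance. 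Your ``triangular'' determination of the $V_{\infty,n}$ from $\mathcal{V}_{\infty}$ can be made rigorous, but it does not rescue the construction without the missing backward estimate.
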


This result is a direct consequence of Theorem~\ref{homeomorphism}, Lemma~\ref{silent lemma} and Remark~\ref{silent remark} below. Note that the estimates \eqref{inverse continuity} ensure that the inverse of $\Phi_{\infty}$ is continuous, so that $\Phi_{\infty}$ is a homeomorphism between asymptotic data and initial data. When comparing to the results of \cite{hans}, we remark that in \cite{hans} a homeomorphism between initial and asymptotic data is obtained only in some favourable cases. In general, the results there only ensure that given asymptotic data, there exists a corresponding solution. By contrast, we can always parametrize the solutions by using asymptotic data.

Now we move on to our application to Maxwell's equations. Here we only consider dimension 4. Recall that the Kasner spacetimes are of the form $(M,g)$, where $M = (0, \infty) \times \mathbb{T}^3$, 
\[
g = -dt \otimes dt + \sum_i t^{2p_i} dx^i \otimes dx^i,
\]
and the $p_i$ are constants. If we take $p_1 \leq p_2 \leq p_3$, the Kasner relations
\[
p_1 + p_2 + p_3 = p_1^2 + p_2^2 + p_3^2 = 1
\]
lead to two possibilities. Either $p_3 < 1$, or $p_3 = 1$. The latter forces $p_1 = p_2 = 0$ and the spacetime is actually flat. Here we focus on the non-flat Kasner spacetimes, so we assume $p_3 < 1$. In accordance with the comments made in connection to Definition~\ref{canonical}, in order to study the behavior as $t \to 0$, we are led to consider the time coordinate $\tau = -\ln t$, and the conformally rescaled metric
\[
\hat{g} = e^{2\tau} g = -d\tau \otimes d\tau + \sum_i e^{2(1-p_i)\tau} dx^i \otimes dx^i.
\]
If $\bar{g}$ and $\bar{k}$ are the metric and second fundamental form of the $\mathbb{T}^3_{\tau}$ induced by the metric $\hat{g}$, then
\[
\bar{k} = \sum_i (1-p_i) e^{2(1-p_i)\tau} dx^i \otimes dx^i \geq (1-p_3) \bar{g}.
\]
Since $\chi = 0$ in this case, by recalling the definition of $C^1$-silent, we see that $(M,\hat{g})$ is an appropriate background for the application of our results. 

The source free Maxwell's equations are
\[
\diver F = 0, \quad dF = 0,
\]
where $F \in \Omega^2(M)$ is the Faraday tensor. Since $dF = 0$, locally we can find a one form $\omega$ such that $F = d\omega$. Furthermore, we can choose $\omega$ in such a way that Maxwell's equations become 
\[
\Box_g \omega = 0, \quad \diver \omega = 0. 
\]
We can then use our results to analyze the asymptotics of $\omega$ as $t \to 0$. We obtain the following result regarding the energy of $F$. Here we only give a rough statement of the result. For the precise statement and proof see Section~\ref{proof of blow up} below; in particular, Theorem~\ref{generic blow up} and Remark~\ref{localisation argument} below.

\begin{theorem} \label{generic blow up of energy}
    There is an open and dense subset $\mathcal{A}$ of the space of initial data at $t = 1$ for the source free Maxwell's equations on a Kasner spacetime, in the $C^{\infty}$ topology, such that if $F$ is a solution with initial data in $\mathcal{A}$, the following holds. Define the stress-energy tensor $T$ associated with $F$ by
    \[
    T_{\alpha \beta} = \frac{1}{4\pi} \bigg( F_{\alpha \gamma} F_{\beta}^{\phantom{a} \gamma} - \frac{1}{4} g_{\alpha \beta} F_{\mu \nu} F^{\mu \nu} \bigg).
    \]
    Then for almost every timelike geodesic $\gamma$ (see Remark~\ref{almost every} below) the energy density $T(\dot{\gamma}, \dot{\gamma})$ along $\gamma$ blows up like $t^{-(2p_2 + 4p_3)}$ as $t \to 0$.
\end{theorem}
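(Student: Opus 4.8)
The plan is to reduce the statement about the energy density $T(\dot\gamma,\dot\gamma)$ to the asymptotic analysis of the potential $\omega$ that is already available from Theorem~\ref{pde asymptotic theorem}. First I would set up the reduction to a wave equation: writing $F=d\omega$ in Lorenz gauge $\diver\omega=0$, Maxwell's equations become $\Box_g\omega=0$. On a non-flat Kasner background $g=-dt\otimes dt+\sum_i t^{2p_i}dx^i\otimes dx^i$, the operator $\Box_g$ acting on the components of $\omega$ (after expanding $\omega=\omega_\mu dx^\mu$) has time-dependent coefficients of exactly the separable form \eqref{pde}, since $g^{\mu\nu}=\diag(-1,t^{-2p_1},t^{-2p_2},t^{-2p_3})$ and the Christoffel symbols depend only on $t$. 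I would then change the time variable so that $t\to0$ corresponds to $\tau\to\infty$ (e.g. $\tau\sim-\ln t$) and conformally rescale to bring the metric into canonical separable cosmological form; one must check that the $C^1$-silent, weakly convergent, and $C^0$-future bounded hypotheses hold for the resulting system. The mean curvature of the $t=$ const slices is $\tr_{\bar g}\bar k=(p_1+p_2+p_3)/t=1/t$, which is strictly positive, and the shift vanishes, so the silence conditions reduce to computing the rescaled second fundamental form and checking the exponential decay of the spatial-derivative coefficients — this is essentially a direct computation with the explicit Kasner metric, and $\mathfrak{e}=0$ can be taken. This puts us in a position to apply Theorems~\ref{pde asymptotic theorem} and \ref{homeomorphism theorem}, yielding an asymptotic expansion $\omega_\mu(t,x)\sim\sum t^{\lambda}(\log t)^j c_{\mu,\lambda,j}(x)$ as $t\to0$, with the leading exponents $\lambda$ governed by the eigenvalues of $A_\infty$ for each component.

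Next I would extract from this expansion the leading behavior of the components of $F=d\omega$, hence of $F_{\alpha\gamma}F_\beta{}^\gamma$ in an orthonormal frame. The key computation is to identify which component of $F$ has the slowest decay (equivalently, which blows up fastest) in the orthonormal frame $e_0=\partial_t$, $e_i=t^{-p_i}\partial_i$; the claimed rate $t^{-(2p_2+4p_3)}$ for $T(\dot\gamma,\dot\gamma)$ should match $|F_{\hat 1\hat 2}|^2$ or $|F_{\hat 0\hat 3}|^2$ type terms after accounting for the frame factors $t^{-p_i}$ and the leading time exponent of the relevant $\omega$ component. I would then compute $A_\infty$ explicitly for each scalar equation satisfied by the components $\omega_\mu$ — each is of the form $u_{tt}+\alpha(t)u_t+\zeta(t)u=(\text{exponentially small spatial terms})$ with $\alpha,\zeta$ having explicit limits determined by the $p_i$ — and read off $\kappa_{\max}(A_\infty)$, which determines the generic leading power. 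Genericity of the open dense set $\mathcal A$ enters precisely here: the coefficient functions multiplying the slowest-decaying mode must be nonzero, which is an open dense condition on the initial data, and one must also ensure the relevant spatial Fourier modes (or the spatially constant mode) of that coefficient are nonvanishing so that the blow-up is not accidentally cancelled.

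Finally I would handle the geodesic $\gamma$. For a timelike geodesic in Kasner spacetime, the conserved momenta $p_i\dot\gamma^i\cdot t^{2p_i}$ (from the Killing fields $\partial_i$) are constants, so $\dot\gamma$ expressed in the orthonormal frame has components $\dot\gamma^{\hat 0}=(1+\sum_i c_i^2 t^{-2p_i})^{1/2}$ and $\dot\gamma^{\hat i}=c_i t^{-p_i}$ for constants $c_i$; generically $c_3\neq0$, so as $t\to0$ the velocity is dominated by the $e_3$ direction, $\dot\gamma^{\hat 0}\sim\dot\gamma^{\hat 3}\sim |c_3| t^{-p_3}$. Then $T(\dot\gamma,\dot\gamma)=T_{\hat\alpha\hat\beta}\dot\gamma^{\hat\alpha}\dot\gamma^{\hat\beta}$ picks up an extra factor $t^{-2p_3}$ from $(\dot\gamma^{\hat 3})^2$ relative to $T$ measured by a normal observer, which should account for the difference between $2p_2+2p_3$ (a natural rate for $T(U,U)$) and the claimed $2p_2+4p_3$. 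I would organize this as: (i) compute the leading term of $T$ in the orthonormal frame from the $\omega$ expansion; (ii) compute the leading term of $\dot\gamma$ in that frame; (iii) contract. The main obstacle I anticipate is bookkeeping: keeping careful track of the many time exponents — those coming from $A_\infty$ for each of the four components $\omega_\mu$, those from the frame rescalings $t^{-p_i}$, those from $\dot\gamma$, and the constraint $\diver\omega=0$ which couples the components and may kill the naive leading term of some $\omega_\mu$ — and verifying that, after all cancellations forced by the gauge condition and Maxwell's equations, the surviving leading term is exactly $t^{-(2p_2+4p_3)}$ and that its coefficient is generically nonzero. The "almost every geodesic" clause is comparatively easy: it excludes the measure-zero set where $c_3=0$, or more precisely where $\dot\gamma$ fails to be generic with respect to the frame, and Remark~\ref{almost every} presumably makes this precise via the natural measure on the space of geodesics.
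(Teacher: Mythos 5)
Your skeleton (Lorenz gauge, reduction to a silent system in $\tau=-\ln t$, verification of the $C^1$-silent/weakly convergent/$C^0$-future bounded hypotheses, extraction of the leading terms of $F=d\omega$, and contraction with $\dot\gamma$ using the conserved momenta $c_i=\dot x^i t^{2p_i}$) is the same as the paper's, and the frame bookkeeping you sketch does reproduce the rate $t^{-(2p_2+4p_3)}$; note however that the paper must first rescale the spatial components, $\widehat\omega_j=e^{-(1-p_j)\tau}\omega_j$, before the hypotheses hold, and that there are candidate terms of order $t^{2-8p_3}$ (coming from $F_{t3}$ in $T_{tt}$ and $T_{33}$) which for part of the Kasner range blow up \emph{faster} than $t^{-(2p_2+4p_3)}$; these cancel identically because of the signs in the definition of $T$, not because of the gauge or the constraints, and without verifying this cancellation the claimed rate is not established. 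The genuine gaps, though, are in the genericity part of the statement. First, "almost every timelike geodesic" is not the condition $c_3\neq 0$: the leading coefficient of the blow-up is $\frac{\delta c_2^2+c_3^2}{4\pi}\big(4p_1^2u_1^2+(\partial_2u_3-\partial_3u_2)^2\big)$ evaluated at the spatial point $q\in\mathbb{T}^3$ to which the geodesic converges as $t\to0$, so even with $c_3\neq0$ the rate fails along any geodesic whose limit point lies in the zero set of this coefficient. The paper therefore proves that causal curves have convergent spatial components, studies the "endpoint map" $F:P\to\mathbb{T}^3$, shows it is $C^1$ with surjective differential away from the set where some $c_i=0$, and concludes via the implicit function theorem that $F^{-1}(u_1^{-1}(0))$ has measure zero; your proposal has no counterpart to this step.

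Second, the open–dense claim must hold inside the constraint set $\mathcal{S}$ (the initial data satisfying \eqref{constraint eqs initial data}), not in all of $C^{\infty}(\mathbb{T}^3,\mathbb{R}^8)$, and "the coefficient is nonzero" is not the right condition — what is needed is that $0$ be a \emph{regular value} of $u_1$, so that $u_1^{-1}(0)$ is a hypersurface of measure zero feeding into the geodesic argument above. The paper obtains density of this condition within the constraint set by passing to asymptotic data via $\Phi_{\infty}$ (Theorem~\ref{homeomorphism theorem}), observing that the asymptotic constraint equations \eqref{constraint eqs asymptotic data} involve only derivatives of $u_1$ and never $u_1$ itself, translating $u_1\mapsto u_1+a$, and invoking Sard's theorem; your proposal invokes the homeomorphism theorem but never uses this structural fact, and density within the constraint manifold does not follow from a generic nonvanishing statement on unconstrained data. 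Finally, the theorem as stated concerns arbitrary Maxwell fields $F$, which on $\mathbb{T}^3$ need not be globally exact; the paper reduces to $F=a\,dx^1\wedge dx^2+b\,dx^1\wedge dx^3+c\,dx^2\wedge dx^3+d\omega$ using $H^2(M)\cong\mathbb{R}^3$ and checks that the constant $c$ only shifts $\partial_2u_3-\partial_3u_2$, a step your proposal omits by assuming $F=d\omega$ from the start.
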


\begin{remark} \label{almost every}
    Since for a fixed $t_0$, $\{ t_0 \} \times \mathbb{T}^3$ is a Cauchy surface, we can identify the set of inextendible timelike geodesics in $M$ with the set $P$ of past directed unit timelike vectors with base point in $\{ t_0 \} \times \mathbb{T}^3$. Since $P$ is a manifold, there is a notion of sets of measure zero in $P$. Thus when we say ``almost every timelike geodesic" in the statement of Theorem~\ref{generic blow up of energy}, we mean that the set of inextendible timelike geodesics for which the statement does not hold, corresponds to a set of measure zero in $P$.
\end{remark}

\subsection{Outline}

We proceed as follows. In Section \ref{silent equations}, we introduce the analytic conditions on \eqref{pde} which will be used to prove the main results. The main technical observation about the equations, is that it is possible to perform a Fourier decomposition to obtain a sequence of ODEs for the modes of the solution. These ODEs can then be analyzed to obtain conclusions about the solution of the original equation. In Section~\ref{ode analysis}, we analyze the class of ODEs which arises from the Fourier decomposition, obtaining asymptotic estimates of all orders and a complete characterization of solutions in terms of the asymptotic data. In Section~\ref{pde analysis}, we use the results obtained in Section~\ref{ode analysis} to prove analogous results for silent equations. The main results are Theorem~\ref{pde asymptotic lemma}, which provides the asymptotic estimates, and Theorem~\ref{homeomorphism}, which proves that there is a homeomorphism between initial data and asymptotic data in the $C^{\infty}$ topology. Finally, in Section~\ref{maxwells equations}, we apply our results to Maxwell's equations in Kasner spacetimes. The main results of the analysis are Theorem~\ref{omega expansion theorem}, which gives an asymptotic expansion for the potential of the electromagnetic field; and Theorem~\ref{generic blow up}, which gives generic blow up of the energy density.

\section{Weakly silent, balanced and convergent equations} \label{silent equations}

In Section~\ref{geometric conditions}, we defined the class of equations of interest through the formulation of geometric conditions. In this section, we introduce the analytic conditions that are used in the proofs of the main results. Since the coefficients of \eqref{pde} only depend on time, the equation can be decomposed into a sequence of ODEs for the Fourier modes of the solution. We formulate the analytic conditions in terms of this decomposition. 

\subsection{Fourier decomposition of the equation}

Next, for the sake of the reader, we recall Sections 24.1.2 and 24.1.3 in \cite{hans}. Consider \eqref{pde} and let $\varphi_{r,i}$ be orthonormal eigenfunctions of $\Delta_{g_r}$ with corresponding eigenvalues $-\nu_{r,i}^2$. Define
\[
\iota := (n, i_1, \ldots, i_R),
\]
where $n \in \mathbb{Z}^d$, and denote the set of such $\iota$ by $\mathcal{J}_B$. Also define 
\[
\nu(\iota) := (n, \nu_{1,i_1}, \ldots, \nu_{R, i_R}).
\]
We think of $\mathbb{T}^d$ as $[0,2\pi]^d$ with the ends identified. Let $x \in \mathbb{T}^d$, $p_r \in M_r$ and $p = (x, p_1, \ldots, p_R)$. Define $\varphi_{\iota}$ by
\[
\varphi_{\iota}(p) := (2\pi)^{-d/2} e^{in \cdot x} \varphi_{1,i_1}(p_1) \cdots \varphi_{R,i_R}(p_R).
\]
Note that $\varphi_{\iota}$ is an eigenfunction of $\Delta_T + \Delta_1 + \cdots + \Delta_R$ with eigenvalue $-|\nu(\iota)|^2$, where $\Delta_T$ is the standard Laplacian on $\mathbb{T}^d$.

The volume form that we will use is given by
\begin{equation*} 
    \mu_B := dx \wedge \mu_{g_1} \wedge \cdots \wedge \mu_{g_R},
\end{equation*}
where $dx$ is the standard volume form in $\mathbb{T}^d$ and $\mu_r$ is the Riemannian volume form of $(M_r, g_r)$. Then we can think of $L^2(\bar{M})$ as a complex Hilbert space with inner product
\[
\langle u, v \rangle_B := \int_{\bar{M}} uv^* \mu_B,
\]
where $*$ denotes complex conjugation. Note that the $\varphi_{\iota}$ for $\iota \in \mathcal{J}_B$ form an orthonormal basis of $L^2(\bar{M})$, in particular 
\[
\int_{\bar{M}} |u|^2 \mu_B = \sum_{\iota \in \mathcal{J}_B} |\hat{u}(\iota)|^2
\]
for $u \in C^{\infty}(\bar{M}, \mathbb{C})$, where
\[
\hat{u}(\iota) := \langle u, \varphi_{\iota} \rangle_B.
\]
If $u \in C^{\infty}(\bar{M}, \mathbb{C})$ and $s \in \mathbb{R}$, define
\begin{equation} \label{sobolev norm}
    \|u\|_{(s)} := \bigg( \sum_{\iota \in \mathcal{J}_B} \langle \nu(\iota) \rangle^{2s} |\hat{u}(\iota)|^2 \bigg)^{1/2},
\end{equation}
where we use the notation $\langle \xi \rangle = (1 + |\xi|^2)^{1/2}$. For $s \in \mathbb{R}$, the Sobolev space $H^s(\bar{M})$ is defined as the set of complex valued distributions such that the value of $\| \cdot \|_{(s)}$ is finite. The space $H^s(\bar{M}, \mathbb{C}^n)$ is defined similarly; in this case, $\hat{u}(\iota)$ is defined componentwise.

Going back to Equation \eqref{pde}, assume the associated spacetime $(M,g)$ is a canonical separable cosmological model manifold. Multiplying \eqref{pde} by $\varphi_{\iota}$, with respect to $\langle \, \cdot \, , \, \cdot \, \rangle_B$, we obtain
\begin{equation} \label{mode equation}
    \Ddot{z} + \mathfrak{g}^2 z - 2in_l g^{0l} \dot{z} + \alpha \dot{z} + i n_l X^l z + \zeta z = \hat{f},
\end{equation}
where
\begin{align*}
    &\mathfrak{g}(t, \iota) := \left( g^{jl}(t) n_j n_l + \sum_r a_r^2(t) \nu_{r, i_r}^2 \right)^{1/2},\\
    &z(t, \iota) := \langle u(t), \varphi_{\iota} \rangle_B,\\
    &\hat{f}(t, \iota) := \langle f(t), \varphi_{\iota} \rangle_B.
\end{align*}
For $\iota \neq 0$, define
\begin{equation*}
    \sigma(t, \iota) := \frac{n_l g^{0l}(t)}{\mathfrak{g}(t, \iota)}, \quad X(t, \iota) := \frac{n_l X^l(t)}{\mathfrak{g}(t, \iota)};
\end{equation*}
then we can write \eqref{mode equation} as 
\begin{equation*}
    \Ddot{z} + \mathfrak{g}^2 z - 2i \sigma \mathfrak{g} \dot{z} + \alpha \dot{z} + iX \mathfrak{g} z + \zeta z = \hat{f}.
\end{equation*}

\subsection{Weak silence and weak balance}

In Section~\ref{geometric conditions}, we imposed geometric conditions on Equation~\eqref{pde}. Now we introduce weaker analytic conditions, which are the ones used in the proofs of the main results; cf. Definition~10.1 from \cite{hans}.

\begin{definition} \label{silent}
    Consider \eqref{pde} and assume $(M,g)$ is a canonical separable cosmological model manifold. If there is a constant $C_{\text{coeff}}$ such that
    \[
    |\sigma(t, \iota)| + \| X(t,\iota) \| + \|\alpha(t)\| + \|\zeta(t)\| \leq C_{\text{coeff}}
    \]
    for $\iota \neq 0$ and $t \geq 0$, then Equation \eqref{pde} is said to be \emph{weakly balanced}. Define
    \[
    \ell(t,\iota) := \ln( \mathfrak{g}(t,\iota) ).
    \]
    If there is a constant $\mathfrak{b}_{\text{s}} > 0$ and a continuous non-negative function $\mathfrak{e} \in L^1([0,\infty))$ such that 
    \[
    \dot{\ell}(t,\iota) \leq -\mathfrak{b}_{\text{s}} + \mathfrak{e}(t)
    \]
    for all $\iota \neq 0$ and $t \geq 0$, then Equation \eqref{pde} is said to be \emph{weakly silent}. Set $c_{\mathfrak{e}} := \|\mathfrak{e}\|_1$. If \eqref{pde} is weakly silent, we define $T_{\text{ode}}$ as follows. If $\mathfrak{g}(0,\iota) \leq e^{-c_{\mathfrak{e}}}$, then $T_{\text{ode}} := 0$. If $\mathfrak{g}(0,\iota) > e^{-c_{\mathfrak{e}}}$, then $T_{\text{ode}}$ is the first $t \geq 0$ such that $\mathfrak{g}(t,\iota) = e^{-c_{\mathfrak{e}}}$.
\end{definition}

Now we need to relate the conditions introduced in Section~\ref{geometric conditions} with the notions of weak silence and weak balance. For that purpose, we will make use of the following result.

\begin{lemma} \label{silent lemma}
    Consider \eqref{pde} and assume that $(M,g)$ is a canonical separable cosmological model manifold. If \eqref{pde} is $C^1$-silent, then it is weakly silent with $\mathfrak{b}_{\text{\emph{s}}} = \mu$. Additionally, if \eqref{pde} is weakly convergent and $\mathcal{X}$ is $C^0$-future bounded, then \eqref{pde} is also weakly balanced.
\end{lemma}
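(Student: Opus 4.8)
The plan is to translate each geometric hypothesis ($C^1$-silence, weak convergence, $C^0$-future boundedness) into the corresponding analytic bound appearing in Definition~\ref{silent}, working mode by mode. The statement about $\alpha$ and $\zeta$ is immediate: weak convergence gives $\|\alpha(t)-\alpha_\infty\| + \|\zeta(t)-\zeta_\infty\| \leq C_{\mathrm{mn}}e^{-\eta_{\mathrm{mn}}t}$, hence $\|\alpha(t)\|$ and $\|\zeta(t)\|$ are bounded for $t \geq 0$, contributing to $C_{\mathrm{coeff}}$.

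For the weak silence part, the key is to relate $\dot{\ell}(t,\iota) = \partial_t \ln \mathfrak{g}(t,\iota)$ to the second fundamental form. First I would compute $\partial_t(\mathfrak{g}^2)$ directly from $\mathfrak{g}^2 = g^{jl}n_jn_l + \sum_r a_r^2 \nu_{r,i_r}^2$, so $\dot{\ell} = \tfrac{1}{2\mathfrak{g}^2}(\dot{g}^{jl}n_jn_l + \sum_r 2a_r\dot{a}_r\nu_{r,i_r}^2)$. The point is that on a canonical separable cosmological model manifold, with $N=1$, the quantities $\dot{g}^{jl}$ and $\dot{a}_r/a_r$ are governed by $\bar k$: specifically, following the computations referenced in \cite[Chapter 25]{hans}, the second fundamental form acting on the relevant frame directions, together with the shift, controls these time derivatives, and the hypothesis $\bar k \geq (\mu - \mathfrak{e})\bar g$ together with $|\chi|_{\bar g}|\dot\chi|_{\bar g} \leq \mathfrak{e}$ yields $\dot\ell(t,\iota) \leq -\mu + \mathfrak{e}(t)$ for all $\iota \neq 0$ and $t \geq 0$. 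Thus weak silence holds with $\mathfrak{b}_{\mathrm{s}} = \mu$. This is the step I expect to be the main obstacle — not because it is deep, but because it requires carefully extracting from the ADM-type decomposition the precise relation between $\partial_t g^{jl}$, $\partial_t a_r$ and $\bar k$ (and tracking how the shift $\chi$ enters through $U = \partial_t - \chi$ rather than $\partial_t$), and then checking that the quadratic form $\dot g^{jl}n_jn_l + \sum_r 2a_r\dot a_r \nu_{r,i_r}^2$ is dominated by $-2(\mu-\mathfrak{e})\mathfrak{g}^2$ uniformly in the mode $\iota$.

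For weak balance it remains to bound $|\sigma(t,\iota)|$ and $\|X(t,\iota)\|$. For $\sigma = n_l g^{0l}/\mathfrak{g}$: the uniformly timelike condition $g(\partial_t,\partial_t) \leq -\eta_{\mathrm{sh},0}^2$ combined with $g^{00} = -1$ controls $g^{0l}$ in terms of the shift, and one estimates $|n_l g^{0l}| = |\langle \chi, n\rangle_{\bar h}| \leq |\chi|_{\bar h}|n|_{\bar h} \lesssim |\chi|_{\bar g}\,\mathfrak{g}$ (since $|n|_{\bar h}^2 = g^{jl}n_jn_l \leq \mathfrak{g}^2$); the bound $|\chi|_{\bar g} \leq C$ follows from $C^1$-silence (via $|\chi|_{\bar g}|\dot\chi|_{\bar g}\leq \mathfrak{e}\in L^1$ together with a differential-inequality argument, as in the remarks preceding the lemma). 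Hence $|\sigma|$ is bounded. For $X(t,\iota) = n_l X^l/\mathfrak{g}$: one has $\|n_l X^l\| \leq |\mathcal{X}|_{\bar h}\,|n|_{\bar h} \leq C_0\,\mathfrak{g}$ by Cauchy–Schwarz using the definition of $|\mathcal{X}|_{\bar h}$ and $C^0$-future boundedness, so $\|X(t,\iota)\| \leq C_0$. Combining all four bounds gives the constant $C_{\mathrm{coeff}}$, establishing weak balance and completing the proof.
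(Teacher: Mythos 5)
The paper's own proof is essentially a citation: it defers the whole translation to Lemmas~25.12 and 25.17 of \cite{hans}, adding only the single observation that \eqref{uniformly timelike} implies $|\chi|_{\bar g}$ is bounded. Your outline follows the same general path (convert the geometric hypotheses into the mode-wise bounds of Definition~\ref{silent}), and deferring the $\dot\ell$--versus--$\bar k$ computation to the calculations in \cite[Chapter~25]{hans} is in line with what the paper itself does. However, the details you do supply contain two genuine errors, both located exactly where the paper's one substantive remark lives. First, the boundedness of $|\chi|_{\bar g}$ does not follow from ``$|\chi|_{\bar g}|\dot\chi|_{\bar g}\le\mathfrak e\in L^1$ together with a differential-inequality argument'': an $L^1$-in-time bound on the product $|\chi|_{\bar g}|\dot\chi|_{\bar g}$ gives no pointwise control of $|\chi|_{\bar g}$ (and $\tfrac{d}{dt}|\chi|^2_{\bar g}$ is not $2\langle\chi,\dot\chi\rangle_{\bar g}$ anyway, since $\bar g$ is time dependent and $\dot\chi=\mathcal L_U\chi$, so $\bar k(\chi,\chi)$ terms enter). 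The correct and much simpler source of the bound is the uniformly timelike condition itself: with $N=1$ one has $g(\partial_t,\partial_t)=-1+|\chi|^2_{\bar h}$, so \eqref{uniformly timelike} gives $|\chi|^2_{\bar h}\le 1-\eta_{\text{sh},0}^2$ — this is precisely the remark the paper makes.

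Second, the identity $|n|^2_{\bar h}=g^{jl}n_jn_l$ you use to get $|n_lg^{0l}|\lesssim|\chi|_{\bar g}\,\mathfrak g$ and $\|n_lX^l\|\le C_0\,\mathfrak g$ is false whenever the shift is nonzero: with $N=1$ the inverse metric satisfies $g^{0l}=\chi^l$ and $g^{jl}=\bar h^{jl}-\chi^j\chi^l$, so $g^{jl}n_jn_l=\bar h^{jl}n_jn_l-(n_l\chi^l)^2\le\bar h^{jl}n_jn_l$, i.e.\ the inequality you need goes the wrong way a priori. The fix again uses \eqref{uniformly timelike}: since $|\chi|^2_{\bar h}\le 1-\eta_{\text{sh},0}^2$, Cauchy--Schwarz gives $g^{jl}n_jn_l\ge\eta_{\text{sh},0}^2\,\bar h^{jl}n_jn_l$, hence $\bar h^{jl}n_jn_l\le\eta_{\text{sh},0}^{-2}\mathfrak g^2$, and then both $|\sigma|\le\eta_{\text{sh},0}^{-1}\sqrt{1-\eta_{\text{sh},0}^2}$ and $\|X(t,\iota)\|\le C\,C_0\eta_{\text{sh},0}^{-1}$ follow. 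So the conclusion of the balance part is right, but as written the argument omits the one place where the hypothesis $\eta_{\text{sh},0}>0$ is actually used; together with the unexecuted weak-silence computation (which you only announce), this is the gap to close — either by carrying out the estimates as above, or by citing Lemmas~25.12 and 25.17 of \cite{hans} as the paper does.
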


\begin{proof}
    See Lemmas 25.12 and 25.17 in \cite{hans}. Note that \eqref{uniformly timelike} implies that $|\chi|_{\bar{g}}$ is bounded.
\end{proof}

This lemma justifies the use of the word weakly in Definition \ref{silent} for weakly silent. Similarly to Definition~\ref{c0 future bounded}, the condition of weak balance is required to prevent solutions from growing super exponentially, see \cite[Chapter 2]{hans} for a detailed explanation. The reason for using the word weak in weak balance, is that in \cite{hans} similar conditions are also introduced, which also present bounds on the first derivatives of the objects of interest.  

Let \eqref{pde} be weakly silent, balanced and convergent. The idea for the analysis is to analyze the modes for the solution through Equation \eqref{mode equation} and then put the results together. For that purpose, let $z(\iota)$, for $\iota \in \mathcal{J}_B$, be a solution to \eqref{mode equation}. Then \eqref{mode equation} can be written as
\begin{equation} \label{ode mode equation}
    \dot{v} = A_{\infty}v + A_{\text{rem}}v + F,
\end{equation}
where
\[
v := 
\begin{pmatrix}
    z\\
    \dot{z}
\end{pmatrix}, \quad A_{\infty} := 
\begin{pmatrix}
    0 & I_m\\
    -\zeta_{\infty} & -\alpha_{\infty}
\end{pmatrix}, \quad F := 
\begin{pmatrix}
    0\\
    \hat{f}
\end{pmatrix},
\]
and
\[
A_{\text{rem}} := 
\begin{pmatrix}
    0 & \, 0\\
    - \mathfrak{g}^2 I_m - in_lX^l + \zeta_{\infty} - \zeta & \, 2in_lg^{0l} I_m + \alpha_{\infty} - \alpha
\end{pmatrix}.
\]
There are two perspectives that we could take from here. From the definition of weak silence, we have  
\[
    \mathfrak{g}(t) \leq e^{c_{\mathfrak{e}}} e^{-\mathfrak{b}_{\text{s}}(t - t_0)} \mathfrak{g}(t_0),
\]
for $t \geq t_0 \geq 0$. Hence, if we let $\bar{t} := t - T_{\text{ode}}$, we have
\[
\mathfrak{g}(t) \leq e^{-\mathfrak{b}_{\text{s}} \bar{t}}
\]
for $t \geq T_{\text{ode}}$. Weak balance thus yields
\[
|n_l g^{0l}(t)| + \|n_l X^l(t)\| \leq C_{\text{coeff}} e^{-\mathfrak{b}_{\text{s}} \bar{t}}
\]
for $t \geq T_{\text{ode}}$, and together with weak convergence, we obtain
\begin{equation} \label{arem estimate 1}
    \|A_{\text{rem}}(t)\| \leq C e^{-\beta_{\text{rem}} \bar{t}}
\end{equation}
for $t \geq T_{\text{ode}}$; where $\beta_{\text{rem}} := \min\{ \mathfrak{b}_{\text{s}}, \eta_{\text{mn}} \}$ and $C$ only depends on $C_{\text{coeff}}$ and $C_{\text{mn}}$. Alternatively,
\[
\mathfrak{g}(t) \leq e^{c_{\mathfrak{e}}} e^{-\mathfrak{b}_{\text{s}}t} \mathfrak{g}(0) \leq C \langle \nu(\iota) \rangle e^{-\mathfrak{b}_{\text{s}} t} 
\]
where $C$ only depends on $c_{\mathfrak{e}}, g^{jl}(0)$ and $a_r(0)$. This together with weak balance and weak convergence yields
\begin{equation} \label{arem estimate 2}
    \|A_{\text{rem}}(t)\| \leq C \langle \nu(\iota) \rangle^2 e^{-\beta_{\text{rem}}t}
\end{equation}
for $t \geq 0$, where $C$ only depends on $c_{\mathfrak{e}}$, $C_{\text{coeff}}$, $C_{\text{mn}}$, $g^{jl}(0)$ and $a_r(0)$. The difference between \eqref{arem estimate 1} and \eqref{arem estimate 2} is the way in which the dependence on $\iota$ is manifested. In what follows, we shall make use of both of these perspectives. In any case, $A_{\text{rem}}$ satisfies an exponential decay estimate. This is the motivation for the analysis that we shall develop in the following section.

In terms of Equation \eqref{pde}, note that the above discussion shows that the coefficients of the spatial derivative terms decay exponentially as $t \to \infty$. Since the coefficients of the time derivative terms converge exponentially, this suggests that we should be able to approximate solutions to \eqref{pde} with solutions to the equation 
\[
u_{tt} + \alpha_{\infty} u_t + \zeta_{\infty} u = f,
\]
or equivalently
\[
    \partial_t
    \begin{pmatrix}
        u\\
        u_t
    \end{pmatrix} = A_{\infty}
    \begin{pmatrix}
        u\\
        u_t
    \end{pmatrix} +
    \begin{pmatrix}
        0\\
        f
    \end{pmatrix},
\]
where $A_{\infty}$ is as in \eqref{ode mode equation}.

\section{ODE analysis} \label{ode analysis}

Motivated by the remarks made in the previous section, we consider the following class of first order linear ODEs,
\begin{equation} \label{ode}
    \dot{v}(t) = A v(t) + A_{\text{rem}}(t) v(t) + F(t),
\end{equation}
where $A \in \mathbb{M}_k (\mathbb{C})$, $A_{\text{rem}}: \mathbb{R} \to \mathbb{M}_k(\mathbb{\mathbb{C}})$ and $F: \mathbb{R} \to \mathbb{C}^k$ are smooth, and there are constants $C_{\text{rem}}$, $T_{\text{ode}} \geq 0$ and $\beta_{\text{rem}} > 0$ such that
\begin{equation} \label{arem estimate}
    \| A_{\text{rem}}(t) \| \leq C_{\text{rem}} e^{-\beta_{\text{rem}} \bar{t}} 
\end{equation}
for $t \geq T_{\text{ode}}$, where $\bar{t} = t - T_{\text{ode}}$. The basic idea is that we should be able to approximate a solution to \eqref{ode} by a solution of the equation
\begin{equation} \label{model ode equation}
    \dot{v} = A v + F.
\end{equation}
In general, it might happen that the asymptotics for a solution of \eqref{ode} are dominated by $F$. In that case, it could be necessary to make detailed assumptions on $F$ in order to get detailed asymptotics. To rule out this possibility, we assume that
\begin{equation} \label{inhomogeneous assumption}
    \|F\|_A := \int_0^{\infty} e^{-\kappa_1 s} |F(s)|ds < \infty
\end{equation}
where $\kappa_1 = \kappa_{\text{max}}(A)$; this ensures that $F$ is asymptotically smaller than the largest solutions of $\dot{v} = Av$. Once we have obtained the asymptotic estimates, it will be of interest to know whether we can prescribe the asymptotic data in these estimates. 

In \cite[Chapter 9]{hans}, leading order asymptotic estimates are obtained under the assumptions just described. Moreover, it is shown that leading order asymptotic data can be specified. In what follows we develop these ideas further, deriving asymptotic estimates of all orders, and obtaining a complete characterization of solutions in terms of the asymptotic data. Furthermore, this is done in such a way that the results can be carried over to the PDE setting of interest. Before getting started, we introduce the setup that is required to describe the asymptotics. 

\begin{definition} \label{generalized eigenspaces}
    Let $\mathcal{N}$ be the smallest non-negative integer such that 
    \[
    \text{Rsp}A := \kappa_{\text{max}}(A) - \kappa_{\text{min}}(A) < \mathcal{N}\beta_{\text{rem}}. 
    \]
    Then there is a $T \in \text{GL}_k(\mathbb{C})$ such that $A_J := T^{-1}AT$ satisfies the following,
    \[
    A_J = \diag(A_1, \ldots, A_\mathcal{N}),
    \]
    where $A_n \in \mathbb{M}_{k_n}(\mathbb{C})$, the $k_n$ are non-negative integers such that $k_1 + \cdots + k_{\mathcal{N}} = k$, and the $A_n$ consist of Jordan blocks. Furthermore, $\text{Rsp}(\diag(A_1, \ldots, A_n)) < n\beta_{\text{rem}}$, $\kappa_{\text{max}}(A_1) = \kappa_{\text{max}}(A)$, $\kappa_{\text{max}}(A_{n+1}) \leq \kappa_{\text{max}}(A) - n\beta_{\text{rem}}$ (assuming $k_n \neq 0$). 

    The matrix $T$ is obtained by transforming $A$ into Jordan normal form and arranging the Jordan blocks appropriately. The subspace 
    \[
    E_{A,\beta_{\text{rem}}}^n := T(\{ 0 \}^{k_1 + \cdots + k_{n-1}} \times \mathbb{C}^{k_n} \times \{0\}^{k_{n+1} + \cdots + k_{\mathcal{N}}})
    \]
    is called the \emph{$n$-th generalized eigenspace in the $\beta_{\text{\emph{rem}}}$, $A$-decomposition of $\mathbb{C}^k$}. Note that ${\mathbb{C}^k = E_{A,\beta_{\text{rem}}}^1 \oplus \cdots \oplus E_{A,\beta_{\text{rem}}}^{\mathcal{N}}}$; we denote by $\pi_n: \mathbb{C}^k \to E_{A,\beta_{\text{rem}}}^n$ the corresponding projection map. We shall also use the notation $x = (x_1, \ldots, x_{\mathcal{N}}) \in \mathbb{C}^k$ where $x_n \in \mathbb{C}^{k_n}$. 
\end{definition}

Note that this definition is consistent with Definition~\ref{eigenspaces} with $\beta = \beta_{\text{rem}}$. The matrix $\re A_J - \kappa_1 I_k$ is real, consists of Jordan blocks and has non-positive diagonal entries. It is convenient to ensure that the Jordan blocks with negative eigenvalues are negative definite. If $J$ is a $m \times m$ Jordan block with diagonal entries equal to $\lambda \in \mathbb{R}$ and $D = \diag(1, \varepsilon, \ldots, \varepsilon^{m-1})$ for $\varepsilon > 0$, note that $D^{-1} J D$ is the same as $J$ but with upper diagonal entries replaced by $\varepsilon$. A matrix of the form $D^{-1} J D$ is called a \emph{generalized Jordan block}. Note that if ${\lambda < 0}$, by taking $\varepsilon$ small enough, we can make $D^{-1} J D$ negative definite. Going back to $A_J$, taking this observations into account, we see that there is a diagonal $D \in \text{GL}_k(\mathbb{R})$ such that if ${T_A := TD}$, then $T_A^{-1} A T_A$ consists of generalized Jordan blocks. Furthermore, all the generalized Jordan blocks of $J_A := \re(T_A^{-1} A T_A) - \kappa_1 I_k$ corresponding to negative eigenvalues are negative definite. We denote by $J_n$ the block of $J_A$ corresponding to $A_n$ in $A_J$.

We remark that this negative definiteness property of some of the Jordan blocks of $J_A$ is actually not used in what follows. Nonetheless, the matrix $J_A$ is the one used for the proofs in \cite{hans}, and we will refer to one of the proofs of \cite{hans} in the proof of Lemma~\ref{specify ode data} below. We thus choose to use $J_A$ to maintain consistency with \cite{hans}.

Let $A_{J,i} := i \img A_J$, then $A_{J,i}$ is diagonal and imaginary. Define $R_A(t) := \exp(-A_{J,i} t)$ and note that $\|R_A(t)\| = \|R_A(t)^{-1}\| = 1$. Moreover, $R_A(t)$ commutes with $J_A$, since for each generalized Jordan block of $J_A$, the corresponding diagonal entries of $A_{J,i}$ are all equal. The matrix $e^{At}$ can be expressed in terms of the matrices just defined as follows. We have
\[
T_A^{-1} A T_A = A_{J,i} + J_A + \kappa_1 I_k,
\]
thus
\[
e^{At} = e^{\kappa_1 t} T_A R_A(t)^{-1} e^{J_A t} T_A^{-1}.
\]
The following estimate for the operator norm of a matrix exponential will be useful in what follows.

\begin{lemma} \label{matrix exponential estimate}
    Let $A \in \mathbb{M}_k(\mathbb{C})$. Then there is a constant $C$ depending only on $A$ such that
    \[
    \|e^{At}\| \leq C \langle t \rangle^{d_1-1} e^{\kappa_1 t}
    \]
    for $t \geq 0$, where $\kappa_1$ is the largest real part of an eigenvalue of $A$ and $d_1$ is the largest size of a Jordan block corresponding to an eigenvalue of $A$ with real part $\kappa_1$.
\end{lemma}

\begin{proof}
    Let $J_{\lambda,d}$ denote a Jordan block of size $d$ and eigenvalue $\lambda$, then
    \[
    e^{J_{\lambda,d}t} = e^{\lambda t} e^{J_{0,d}t} = e^{\lambda t} \sum_{k = 0}^{d-1} \frac{1}{k!} J_{0,d}^k t^k,
    \]
    where the sum is finite because $J_{0,d}$ is nilpotent. Now let $T \in \text{GL}_k(\mathbb{C})$ such that 
    \[
    J = T^{-1} A T = \diag(J_{\lambda_1,d_1}, \ldots, J_{\lambda_m,d_m}), 
    \]
    where $\re(\lambda_i) \geq \re(\lambda_{i+1})$ and $d_i \geq d_{i+1}$ whenever $\re(\lambda_i) = \re(\lambda_{i+1})$. Then
    \[
    e^{At} = T e^{Jt} T^{-1} = T \diag(e^{J_{\lambda_1,d_1}t}, \ldots, e^{J_{\lambda_m,d_m}t}) T^{-1}.
    \]
    The lemma follows.
\end{proof}

\subsection{Asymptotic estimates}

Consider Equation \eqref{ode}. For what follows, we modify the condition \eqref{arem estimate} by requiring that
\begin{equation} \label{error}
    \| A_{\text{rem}}(t) \| \leq C_{\text{rem}} \gamma e^{-\beta_{\text{rem}} \bar{t}} 
\end{equation}
for $t \geq T_{\text{ode}}$ where $\gamma > 0$ is some constant. The reason for adding the constant $\gamma$ to the hypothesis on $A_{\text{rem}}$ is that, when we apply the results obtained here by using the estimate \eqref{arem estimate 2}, it will be important to keep track of $\langle \nu(\iota) \rangle^2$ since it depends on $\iota$. The following result is based on the proof of \cite[Lemma 9.16]{hans}. The idea is that, given an asymptotic estimate for a solution of \eqref{ode}, the assumption \eqref{error} makes it possible to improve on it; and the new approximation is given in terms of a solution of \eqref{model ode equation} plus a term depending on the original approximation. 

\begin{lemma} \label{asymptotic lemma}
    Consider Equation \eqref{ode} where \eqref{error} holds. Suppose that there is an $n \geq 0$, a constant $K$, a non-negative integer $M$, a smooth function $H: [0,\infty) \to \mathbb{C}^k$ and a function $\alpha: [0,\infty) \to (0,\infty)$ such that
    \begin{equation} \label{asymptotic estimate}
        |v(t) - H(t)| \leq K \langle \bar{t} \rangle^M e^{(\kappa_1 - n\beta_{\text{\emph{rem}}}) \bar{t}} \alpha(T_{\text{\emph{ode}}})
    \end{equation}
    for $t \geq T_{\text{\emph{ode}}}$. Then there is a unique $v_{\infty} \in E_{A,\beta_{\text{\emph{rem}}}}^1 \oplus \cdots \oplus E_{A,\beta_{\text{\emph{rem}}}}^{n+1}$ (if $n \geq \mathcal{N}$, then $v_{\infty} \in \mathbb{C}^k$), a constant $C$ and a non-negative integer $N$ such that
    \begin{equation} \label{dependence}
    \begin{split}
        \bigg|v(t) - e^{A t} v_{\infty} - \int_{T_{\emph{ode}}}^t &e^{A(t-s)} A_{\text{\emph{rem}}} H(s)ds - \int_{T_{\emph{ode}}}^t e^{A(t-s)} F(s)ds  \bigg|\\
        &\leq C \langle \bar{t} \rangle^N e^{(\kappa_1 - (n+1)\beta_{\text{\emph{rem}}}) \bar{t}} (|v(T_{\text{\emph{ode}}})| + \gamma \alpha(T_{\text{\emph{ode}}}))
    \end{split}
    \end{equation}
    for $t \geq T_{\text{\emph{ode}}}$, where $C$ depends only on $K$, $C_{\text{\emph{rem}}}$, $\beta_{\text{\emph{rem}}}$ and $A$, and $N$ depends only on $M$ and $A$. Furthermore, if $v_{\infty} = e^{-A T_{\emph{ode}}} u_{\infty}$, then
    \begin{equation}
        |u_{\infty}| \leq C(|v(T_{\emph{ode}})| + \gamma \alpha(T_{\emph{ode}}))
    \end{equation}
    where $C$ has the same dependence as in \eqref{dependence}.
\end{lemma}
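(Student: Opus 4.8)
The plan is to iterate the variation of constants formula once, starting from the hypothesized estimate \eqref{asymptotic estimate}, and to read off $v_\infty$ from the boundary term at infinity. Write $w(t) := v(t) - H(t)$, so $|w(t)| \leq K\langle \bar t\rangle^M e^{(\kappa_1 - n\beta_{\text{rem}})\bar t}\alpha(T_{\text{ode}})$. By Duhamel applied to \eqref{ode} on $[T_{\text{ode}}, t]$, one has
\[
v(t) = e^{A(t-T_{\text{ode}})}v(T_{\text{ode}}) + \int_{T_{\text{ode}}}^t e^{A(t-s)}\big(A_{\text{rem}}(s)v(s) + F(s)\big)\,ds.
\]
In the $A_{\text{rem}}v$ integrand I would substitute $v(s) = H(s) + w(s)$, splitting the integral into a "main" piece $\int_{T_{\text{ode}}}^t e^{A(t-s)}A_{\text{rem}}H\,ds$ (which appears on the left of \eqref{dependence}) and an "error" piece $\int_{T_{\text{ode}}}^t e^{A(t-s)}A_{\text{rem}}w\,ds$. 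The strategy is then to show that the error piece, together with the non-convergent part of the $F$-integral and the non-convergent part of $e^{A(t-T_{\text{ode}})}v(T_{\text{ode}})$, can be reorganized into a term of the form $e^{At}v_\infty$ plus a remainder of size $C\langle\bar t\rangle^N e^{(\kappa_1-(n+1)\beta_{\text{rem}})\bar t}(\cdots)$.

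The key quantitative input is the decomposition $e^{At} = e^{\kappa_1 t}T_A R_A(t)^{-1}e^{J_A t}T_A^{-1}$ recalled in the excerpt, together with the block structure of $J_A$: on $E_{A,\beta_{\text{rem}}}^m$ the factor $e^{J_A t}$ grows no faster than $\langle t\rangle^{k_m-1}e^{-(m-1)\beta_{\text{rem}}t}$ (and on negative-definite blocks it in fact decays, which is exactly why those blocks were arranged to be negative definite). So for the error integral $\int e^{A(t-s)}A_{\text{rem}}(s)w(s)\,ds$, using $\|A_{\text{rem}}(s)\| \le C_{\text{rem}}\gamma e^{-\beta_{\text{rem}}\bar s}$ and the bound on $|w(s)|$, the integrand is pointwise bounded by $C\gamma\alpha(T_{\text{ode}})e^{\kappa_1(t-s)}\langle t-s\rangle^{\#}\cdot e^{-\beta_{\text{rem}}\bar s}\langle\bar s\rangle^M e^{(\kappa_1-n\beta_{\text{rem}})\bar s}$; pulling out $e^{\kappa_1 t}$ one is left with $\int_{T_{\text{ode}}}^t (\text{polynomial})\,e^{-(n+1)\beta_{\text{rem}}\bar s}\,ds$ up to harmless shifts, which converges as $t\to\infty$. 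Projecting onto each block $E^m$ with $m \le n+1$ and absorbing the convergent tail into the definition of $v_\infty$, while the block-$m$ components with $m \ge n+2$ decay fast enough to land in the remainder — this is the bookkeeping that produces $v_\infty \in E^1\oplus\cdots\oplus E^{n+1}$. The $F$-integral is handled identically using hypothesis \eqref{inhomogeneous assumption}: $\int_{T_{\text{ode}}}^\infty e^{-\kappa_1 s}|F(s)|\,ds < \infty$ means $\int_{T_{\text{ode}}}^\infty e^{-A s}F(s)\,ds$ converges in each slow block, and the difference between $\int_{T_{\text{ode}}}^t$ and $\int_{T_{\text{ode}}}^\infty$ is again of remainder size; one then merges this limit into $v_\infty$ as well (note the main $F$-term $\int_{T_{\text{ode}}}^t e^{A(t-s)}F\,ds$ stays on the left of \eqref{dependence}, so only the correction from extending the limit is moved into $v_\infty$).

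For uniqueness of $v_\infty$: if two such vectors worked, their difference $\delta \in E^1\oplus\cdots\oplus E^{n+1}$ would satisfy $|e^{At}\delta| \le C\langle\bar t\rangle^N e^{(\kappa_1-(n+1)\beta_{\text{rem}})\bar t}$ for all $t$; but on each block $E^m$ with $m \le n+1$ the vector $e^{At}$ acting on a nonzero component grows (or decays) no faster in modulus than $\langle t\rangle^{\#}e^{(\kappa_1-(m-1)\beta_{\text{rem}})t}$ from above but also at least like $e^{(\kappa_1 - (m-1)\beta_{\text{rem}} - \epsilon)t}$-type lower bounds coming from the spectral gap structure of the decomposition — more precisely, the minimal real part of an eigenvalue in $E^m$ exceeds $\kappa_1 - m\beta_{\text{rem}}$, so $e^{At}\delta$ cannot decay as fast as $e^{(\kappa_1-(n+1)\beta_{\text{rem}})t}$ unless $\delta = 0$. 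Finally, the estimate on $u_\infty = e^{AT_{\text{ode}}}v_\infty$ follows by tracking constants: $v_\infty$ was built from $e^{A(t-T_{\text{ode}})}v(T_{\text{ode}})$-type terms and convergent integrals whose sizes are controlled by $|v(T_{\text{ode}})|$ and $\gamma\alpha(T_{\text{ode}})$ respectively, and the $e^{-AT_{\text{ode}}}$ factor is exactly what converts $v(T_{\text{ode}})$-coefficients into $u_\infty$-coefficients without introducing $T_{\text{ode}}$-dependent blowup, since everything was phrased in the barred time $\bar t$. The main obstacle I anticipate is the careful block-by-block accounting: one must project the Duhamel identity onto each $E^m$ separately, decide for each $m$ whether the tail integral converges (contributing to $v_\infty$) or already decays fast enough (contributing to the remainder), and verify that the polynomial weights $\langle\bar t\rangle^N$ never spoil the strict exponential gain of one factor $e^{-\beta_{\text{rem}}\bar t}$ — this is where the negative-definiteness of the negative-eigenvalue Jordan blocks and the sharp form of Definition~\ref{generalized eigenspaces} are essential.
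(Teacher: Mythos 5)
Your proposal is correct and follows essentially the same route as the paper: integrate the equation (Duhamel), split $v = H + (v-H)$ inside the $A_{\text{rem}}v$ term, separate the slow blocks $E_{A,\beta_{\text{rem}}}^{1}\oplus\cdots\oplus E_{A,\beta_{\text{rem}}}^{n+1}$ (initial-data term plus the convergent error integral define $v_{\infty}$) from the fast blocks (which decay into the remainder), and get uniqueness by comparing growth rates; the paper merely packages the same bookkeeping through the change of variables $w(t) = e^{-\kappa_1 t}R_A(\bar t)T_A^{-1}v(t)$ so as to work with $J_A$ directly. The one slip is your treatment of $F$: nothing from $F$ should be merged into $v_{\infty}$, because the finite-time integral $\int_{T_{\text{ode}}}^{t} e^{A(t-s)}F(s)\,ds$ produced by Duhamel is exactly the term subtracted in \eqref{dependence} and cancels — which is also why \eqref{inhomogeneous assumption} is not among the hypotheses of this lemma and cannot be invoked here; moreover, your claim that the tail $\int_{t}^{\infty}$ would be of remainder size is unjustified, since \eqref{inhomogeneous assumption} gives convergence but no rate.
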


\begin{proof}
Let us introduce the following notation. For $n < \mathcal{N} - 1$, let
\[
E_{A,\beta_{\text{rem}}}^{a,n} = E_{A,\beta_{\text{rem}}}^1 \oplus \cdots \oplus E_{A,\beta_{\text{rem}}}^{n+1};
\]
also let $x = (x_{a,n}, x_{b,n}) \in \mathbb{C}^k$ where $x_{a,n} \in \mathbb{C}^{k_1 + \cdots + k_{n+1}}$ and $x_{b,n} \in \mathbb{C}^{k_{n+2} + \cdots + k_{\mathcal{N}}}$. Note that if $A_{a,n} = \diag(A_1, \ldots, A_{n+1})$ and $A_{b,n} = \diag(A_{n+2}, \ldots, A_{\mathcal{N}})$ (we use similar notation for the matrix $J_A$), then 
\[
\text{Rsp}(A_{a,n}) < (n+1)\beta_{\text{rem}}, \quad \kappa_{\text{max}}(A_{b,n}) \leq \kappa_{\text{max}}(A) - (n+1)\beta_{\text{rem}}. 
\]
For $n \geq \mathcal{N} - 1$, we take $E_{A,\beta}^{a,n} = \mathbb{C}^k$, $x_{a,n} = x$, $A_{a,n} = A_J$ ($J_{a,n} = J_A$) and disregard everything referring to $x_{b,n}$ and $A_{b,n}$ ($J_{b,n}$) below. In that case, $\text{Rsp}(A_J) < \mathcal{N}\beta_{\text{rem}} \leq (n+1)\beta_{\text{rem}}$.

Let $R(t) = R_A(t)$, $T = T_A$, $J = J_A$ (remember the comments made after Definition~\ref{generalized eigenspaces}). Define $w$ and $G$ by
\begin{equation} \label{w def}
    w(t) := e^{-\kappa_1 t} R(\bar{t}) T^{-1} v(t), \qquad G(t) := e^{-\kappa_1t} R(\bar{t}) T^{-1} F(t).
\end{equation}
Then
\begin{equation} \label{theequation}
    \dot{w}(t) = Jw(t) + A_{\text{rest}}(t) w(t) + G(t),
\end{equation}
where $A_{\text{rest}}(t) = R(\bar{t}) T^{-1} A_{\text{rem}}(t) T R(\bar{t})^{-1}$, so that
\[
\| A_{\text{rest}}(t) \| \leq C \gamma e^{-\beta_{\text{rem}} \bar{t}}
\]
for $t \geq T_{\text{ode}}$, with $C$ depending only on $C_{\text{rem}}$ and $A$. The assumption \eqref{asymptotic estimate} implies that 
\begin{equation}
\begin{split}
    |w(t) - \mathcal{H}(t)| &\leq \| e^{-\kappa_1 t} R(\bar{t}) T^{-1} \| |v(t) - H(t)|\\
    &\leq e^{-\kappa_1 t} \|T^{-1}\| K \langle \bar{t} \rangle^M e^{(\kappa_1 - n \beta_{\text{rem}})\bar{t}} \alpha(T_{\text{ode}})\\
    &\leq C e^{-\kappa_1 T_{\text{ode}}} \langle \bar{t} \rangle^M e^{-n\beta_{\text{rem}} \bar{t}} \alpha(T_{\text{ode}})
\end{split}
\end{equation}
for $t \geq T_{\text{ode}}$, where $\mathcal{H}(t) := e^{-\kappa_1 t} R(\bar{t}) T^{-1} H(t)$ and $C$ depends only on $K$ and $A$. We go back to \eqref{theequation}, which yields
\begin{align*}
    \frac{d}{dt} (e^{-Jt}w(t)) &= e^{-Jt} A_{\text{rest}}w + e^{-Jt}G,\\
    w(t) &= e^{J \bar{t}} w(T_{\text{ode}}) + e^{Jt} \int_{T_{\text{ode}}}^t e^{-Js} (A_{\text{rest}}w)(s)ds + e^{Jt} \int_{T_{\text{ode}}}^t e^{-Js} G(s)ds.
\end{align*}
%
%
%
Focusing on $w_{a,n}$, we can write
\begin{equation} \label{hatequation}
\begin{split}
    e^{-J_{a,n} \bar{t}} w_{a,n}(t) - w_{a,n}(T_{\text{ode}}) - \int_{T_{\text{ode}}}^t e^{-J_{a,n}\bar{s}} ( A_{\text{rest}}& \mathcal{H} )_{a,n}(s) ds - \int_{T_{\text{ode}}}^t e^{-J_{a,n} \bar{s}} G_{a,n}(s)ds\\
    &= \int_{T_{\text{ode}}}^t e^{-J_{a,n} \bar{s}} ( A_{\text{rest}} ( w - \mathcal{H}))_{a,n}(s) ds.
\end{split}
\end{equation}
By construction $\kappa_1 - \kappa_{\text{min}}(A_{a,n}) < (n+1)\beta_{\text{rem}}$, so that
\[ 
    \left|
    e^{-J_{a,n} \bar{s}} ( A_{\text{rest}} ( w - \mathcal{H}) )_{a,n}(s)  \right|
\]
decays exponentially. We can thus define
\[
w_{\infty} := w_{a,n}(T_{\text{ode}}) + \int_{T_{\text{ode}}}^{\infty} e^{-J_{a,n} \bar{s}} ( A_{\text{rest}} ( w - \mathcal{H}) )_{a,n}(s) ds.
\]
Note that
\begin{equation} \label{w inf estimate}
    |w_{\infty}| \leq C( |w(T_{\text{ode}})| + e^{-\kappa_1 T_{\text{ode}}} \gamma \alpha(T_{\text{ode}}) )
\end{equation}
where $C$ depends only on $K$, $C_{\text{rem}}$, $\beta_{\text{rem}}$, and $A$. Equation \eqref{hatequation} then implies
\begin{align}
\begin{split}
    &\bigg| w_{a,n}(t) - e^{J_{a,n} \bar{t}} w_{\infty} - e^{J_{a,n}t} \int_{T_{\text{ode}}}^t e^{-J_{a,n}s} ( A_{\text{rest}} \mathcal{H})_{a,n}(s) ds - e^{J_{a,n}t}\int_{T_{\text{ode}}}^t e^{-J_{a,n}s} G_{a,n}(s)ds \bigg|\\
    &\hspace{6cm} \leq \left| \int_t^{\infty} e^{J_{a,n}(t-s)} 
    ( A_{\text{rest}} ( w - \mathcal{H} ) )_{a,n}(s) ds
    \right|.
\end{split} \nonumber\\
\intertext{By Lemma~\ref{matrix exponential estimate}, we can estimate the right hand side of this by}
\begin{split}
    &\hspace{1cm}C \gamma e^{-\kappa_1 T_{\text{ode}}} \alpha(T_{\text{ode}}) \int_t^{\infty} \langle \bar{s} \rangle^N e^{(\kappa_1 -\kappa_{\text{min}}(A_{a,n})) (s-t)} e^{-(n+1)\beta_{\text{rem}} \bar{s}}ds\\
    &\hspace{7cm} \leq C \gamma e^{-\kappa_1 T_{\text{ode}}} \langle \bar{t} \rangle^N e^{-(n+1)\beta_{\text{rem}} \bar{t}} \alpha(T_{\text{ode}}).
\end{split} \nonumber\\
\intertext{Here $C$ depends only on $K$, $C_{\text{rem}}$, $\beta_{\text{rem}}$ and $A$, and $N = M + d - 1$ where $d$ is the largest size of a Jordan block of $A$. Thus we have obtained the estimate}
\begin{split}
    &\bigg| w_{a,n}(t) - e^{J_{a,n} \bar{t}} w_{\infty} - e^{J_{a,n}t} \int_{T_{\text{ode}}}^t e^{-J_{a,n}s} ( A_{\text{rest}} \mathcal{H})_{a,n}(s) ds - e^{J_{a,n}t}\int_{T_{\text{ode}}}^t e^{-J_{a,n}s} G_{a,n}(s)ds \bigg|\\
    &\hspace{7cm} \leq C \gamma e^{-\kappa_1 T_{\text{ode}}} \langle \bar{t} \rangle^N e^{-(n+1)\beta_{\text{rem}} \bar{t}} \alpha(T_{\text{ode}})
\end{split}  \label{data for w}\\
\intertext{for $t \geq T_{\text{ode}}$. Now we need to estimate the remaining components of $w$. The equation for $w_{b,n}$ yields}
\begin{split}
    &w_{b,n}(t) - e^{J_{b,n}\bar{t}} w_{b,n}(T_{\text{ode}}) - e^{J_{b,n}t} \int_{T_{\text{ode}}}^t e^{-J_{b,n}s} ( A_{\text{rest}} \mathcal{H})_{b,n} ds - e^{J_{b,n} t}\int_{T_{\text{ode}}}^t e^{-J_{b,n} s} G_{b,n}(s)ds\\
    &\hspace{7cm} = e^{J_{b,n}t} \int_{T_{\text{ode}}}^t e^{-J_{b,n}s} \left( A_{\text{rest}} ( w - \mathcal{H}) \right)_{b,n}(s) ds,
\end{split} \nonumber\\
\intertext{implying}
\begin{split}
    &\hspace{0.5cm}\bigg| w_{b,n}(t) - \int_{T_{\text{ode}}}^t e^{J_{b,n}(t-s)} ( A_{\text{rest}} \mathcal{H} )_{b,n}(s) - \int_{T_{\text{ode}}}^t e^{J_{b,n} (t-s)} G_{b,n}(s)ds \bigg|\\
    &\hspace{6cm} \leq C \langle \bar{t} \rangle^N e^{-(n+1)\beta_{\text{rem}} \bar{t}} (|w(T_{\text{ode}})| + \gamma e^{-\kappa_1 T_{\text{ode}}} \alpha(T_{\text{ode}})),
\end{split} \nonumber\\
\intertext{where $C$ depends only on $K$, $C_{\text{rem}}$, $\beta_{\text{rem}}$, and $A$, and $N$ is the same as above. Combining both estimates we obtain}
\begin{split}
    &\hspace{0.5cm}\bigg| w(t) - e^{J \bar{t}} 
    \begin{pmatrix}
        w_{\infty}\\
        0
    \end{pmatrix} - \int_{T_{\text{ode}}}^t e^{J(t-s)} A_{\text{rest}} \mathcal{H} ds - \int_{T_{\text{ode}}}^t e^{J (t-s)} G(s)ds \bigg|\\
    &\hspace{6cm} \leq C \langle \bar{t} \rangle^N e^{-(n+1)\beta_{\text{rem}} \bar{t}} (|w(T_{\text{ode}})| + \gamma e^{-\kappa_1 T_{\text{ode}}} \alpha(T_{\text{ode}})).
\end{split} \nonumber\\
\intertext{Now we go back to expressing everything in terms of $v$,}
\begin{split}
    &\hspace{0.5cm}\bigg| v(t) - e^{A \bar{t}} u_{\infty} - \int_{T_{\text{ode}}}^t e^{A(t-s)} A_{\text{rem}} H(s) ds - \int_{T_{\text{ode}}}^t e^{A (t-s)} F(s)ds\bigg|\\ 
    &\hspace{6cm} \leq C \langle \bar{t} \rangle^N e^{(\kappa_1 - (n+1)\beta_{\text{rem}}) \bar{t}} (|v(T_{\text{ode}})| + \gamma \alpha(T_{\text{ode}})) 
\end{split} \nonumber
\end{align}
where 
\[
u_{\infty} := e^{\kappa_1 T_{\text{ode}}} T 
\begin{pmatrix}
    w_{\infty}\\
    0
\end{pmatrix}
\]
(in case $n \geq \mathcal{N} - 1$, $(w_{\infty},0)$ is replaced by just $w_{\infty}$). By defining $v_{\infty} := e^{-A T_{\text{ode}}} u_{\infty} \in E_{A,\beta_{\text{rem}}}^{a,n}$, we obtain \eqref{dependence}. The estimate for $u_{\infty}$ comes from its definition and \eqref{w inf estimate}.

Finally, for uniqueness, suppose there is a $\tilde{v}_{\infty} \in E^{a,n}_{A,\beta_{\text{rem}}}$ such that \eqref{dependence} holds with $\tilde{v}_{\infty}$ instead of $v_{\infty}$. Then, by using both estimates, we obtain
\[
|e^{At}( v_{\infty} - \tilde{v}_{\infty} )| \leq C\langle \bar{t} \rangle^N e^{(\kappa_1 - (n+1)\beta_{\text{rem}}) \bar{t}}.
\]
But $v_{\infty} - \tilde{v}_{\infty} \in E^{a,n}_{A,\beta_{\text{rem}}}$, so this estimate can only be true if the left hand side is zero, since otherwise the left hand side would grow more quickly that the right hand side. We conclude that $\tilde{v}_{\infty} = v_{\infty}$.
\end{proof}

\begin{remark}
    Note that the term
    \[
    e^{At}v_{\infty} + \int_{T_{\text{ode}}}^t e^{A(t-s)}F(s)ds
    \]
    present in \eqref{dependence} is a solution to the equation $\dot{v} = Av + F$.
\end{remark}

As a first application of this result, we can use the leading order estimates obtained in \cite[Lemma 9.16]{hans} as a starting point to deduce asymptotic estimates of all orders for solutions of Equation \eqref{ode}.

\begin{proposition} \label{ode asymptotic lemma}
Consider Equation \eqref{ode} and assume \eqref{arem estimate} and \eqref{inhomogeneous assumption} to hold. Then for every positive integer $n$, there is a constant $C$, depending only on $C_{\text{\emph{rem}}}$, $\beta_{{\text{\emph{rem}}}}$ and $A$; and a non-negative integer $N$ depending only on $k$, such that the following holds. If $v$ is a solution to \eqref{ode}, there is a unique $v_{\infty,n} \in E_{A,\beta_{\text{\emph{rem}}}}^1 \oplus \cdots \oplus E_{A,\beta_{\text{\emph{rem}}}}^n$ (if $n > \mathcal{N}$, then $v_{\infty,n} \in \mathbb{C}^k$) such that
\begin{equation} \label{estimate}
    |v(t) - F_{\infty,n}(t)| \leq C\langle \bar{t} \rangle^N e^{(\kappa_1 - n\beta_{\text{\emph{rem}}}) \bar{t}} (|v(T_{\text{\emph{ode}}})| + e^{\kappa_1 T_{\text{\emph{ode}}}}\|F\|_A),
\end{equation}
for $t \geq T_{\text{\emph{ode}}}$, where $F_{\infty,n}$ is given by the recursive formula
\[
    F_{\infty,n}(t) = e^{At} v_{\infty,n} + \int_{T_{\text{\emph{ode}}}}^t e^{A(t-s)} A_{\text{\emph{rem}}} F_{\infty, n-1}(s) ds + \int_{T_{\text{\emph{ode}}}}^t e^{A(t-s)} F(s)ds,
\]
and $F_{\infty, 0} = 0$. Furthermore, if $v_{\infty, n} = e^{-A T_{\emph{ode}}} u_{\infty, n}$, 
\begin{equation} \label{u estimate}
    |u_{\infty, n}| \leq C(|v(T_{\emph{ode}})| + e^{\kappa_1 T_{\text{\emph{ode}}}}\|F\|_A).
\end{equation}
\end{proposition}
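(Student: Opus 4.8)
The plan is to prove Proposition~\ref{ode asymptotic lemma} by induction on $n$, using Lemma~\ref{asymptotic lemma} as the inductive step and the leading order estimate of \cite[Lemma 9.16]{hans} as the base case. First I would set up the induction: for $n = 1$, the cited leading order result from \cite{hans} provides a $v_{\infty,1} \in E_{A,\beta_{\text{rem}}}^1$ and an estimate of the form \eqref{estimate} with $F_{\infty,1}(t) = e^{At}v_{\infty,1} + \int_{T_{\text{ode}}}^t e^{A(t-s)}F(s)\,ds$ (this matches the recursive formula since $F_{\infty,0} = 0$). One technical point to check is that the hypothesis \eqref{inhomogeneous assumption} on $F$ is exactly what is needed to make the $\int e^{A(t-s)}F(s)\,ds$ term well-behaved and to produce the $e^{\kappa_1 T_{\text{ode}}}\|F\|_A$ factor appearing on the right-hand side; I would verify this matches the form in which \cite[Lemma 9.16]{hans} states its conclusion.

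For the inductive step, suppose \eqref{estimate} holds for some $n \geq 1$ with approximant $H = F_{\infty,n}$. The point is that this is precisely an estimate of the form \eqref{asymptotic estimate} in Lemma~\ref{asymptotic lemma}, with $K \langle\bar t\rangle^M$ in place of $C\langle\bar t\rangle^N$, exponent $\kappa_1 - n\beta_{\text{rem}}$, and $\alpha(T_{\text{ode}}) = |v(T_{\text{ode}})| + e^{\kappa_1 T_{\text{ode}}}\|F\|_A$. However, to apply Lemma~\ref{asymptotic lemma} I need the hypothesis \eqref{error}, i.e.\ $\|A_{\text{rem}}(t)\| \leq C_{\text{rem}}\gamma e^{-\beta_{\text{rem}}\bar t}$; here I would simply take $\gamma = 1$, so that \eqref{arem estimate} and \eqref{error} coincide. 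Lemma~\ref{asymptotic lemma} then yields a unique $v_{\infty,n+1} \in E_{A,\beta_{\text{rem}}}^1\oplus\cdots\oplus E_{A,\beta_{\text{rem}}}^{n+1}$ together with the estimate \eqref{dependence}, which upon substituting $H = F_{\infty,n}$ becomes exactly \eqref{estimate} for $n+1$, with $F_{\infty,n+1}(t) = e^{At}v_{\infty,n+1} + \int_{T_{\text{ode}}}^t e^{A(t-s)}A_{\text{rem}}F_{\infty,n}(s)\,ds + \int_{T_{\text{ode}}}^t e^{A(t-s)}F(s)\,ds$, matching the recursion. The bound \eqref{u estimate} on $u_{\infty,n+1}$ is the last assertion of Lemma~\ref{asymptotic lemma}, again with $\alpha(T_{\text{ode}}) = |v(T_{\text{ode}})| + e^{\kappa_1 T_{\text{ode}}}\|F\|_A$ and $\gamma = 1$.

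Two bookkeeping matters deserve care. First, the constants: Lemma~\ref{asymptotic lemma} says the new $C$ depends only on $K$, $C_{\text{rem}}$, $\beta_{\text{rem}}$, $A$, and the new $N$ only on $M$ and $k$. Since at each stage $K$ and $M$ are themselves the $C$ and $N$ from the previous stage, and we are iterating a fixed finite number $n$ of times, the final constants depend only on $C_{\text{rem}}$, $\beta_{\text{rem}}$, $A$ (and, for the exponent $N$, only on $k$), as claimed; I would make this dependence-tracking explicit. Second, once $n \geq \mathcal{N}$ the spaces $E_{A,\beta_{\text{rem}}}^1\oplus\cdots\oplus E_{A,\beta_{\text{rem}}}^{n+1}$ stabilize to all of $\mathbb{C}^k$, and Lemma~\ref{asymptotic lemma} already handles the $n \geq \mathcal{N}-1$ case with $E_{A,\beta_{\text{rem}}}^a = \mathbb{C}^k$, so the statement's parenthetical ``(if $n > \mathcal{N}$, then $v_{\infty,n}\in\mathbb{C}^k$)'' is consistent and needs no separate argument. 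Uniqueness of $v_{\infty,n}$ is inherited directly from the uniqueness clause of Lemma~\ref{asymptotic lemma} at the final step. The main obstacle I anticipate is purely organizational rather than mathematical: making sure the precise form of the base-case estimate from \cite[Lemma 9.16]{hans} genuinely matches the hypothesis \eqref{asymptotic estimate} of Lemma~\ref{asymptotic lemma} — in particular that the $F$-dependent term appears there as $e^{\kappa_1 T_{\text{ode}}}\|F\|_A$ and is absorbed into the role of $\alpha(T_{\text{ode}})$ — so that the induction machine starts cleanly.
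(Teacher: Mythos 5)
Your proposal is correct and follows essentially the same route as the paper: the base case is \cite[Lemma 9.16]{hans}, and the inductive step is exactly Lemma~\ref{asymptotic lemma} applied with $H = F_{\infty,n}$, $\gamma = 1$ and $\alpha(T_{\text{ode}}) = |v(T_{\text{ode}})| + e^{\kappa_1 T_{\text{ode}}}\|F\|_A$, with uniqueness and the bound \eqref{u estimate} inherited from that lemma's conclusions. The dependence-tracking of the constants you describe is precisely the point the paper relies on implicitly, so nothing is missing.
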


\begin{remark}
    The non-negative integer $N$ depends linearly on $n$. This is readily seen by checking the proof of Lemma~\ref{asymptotic lemma}.
\end{remark}

\begin{proof}
We begin with an application of \cite[Lemma 9.16]{hans}, which yields a constant $C$ depending only on $C_{\text{rem}}$, $\beta_{\text{rem}}$ and $A$, and a non-negative integer $N$ depending only on $k$, such that for every solution $v$ to the equation, there is a unique $v_{\infty, 1} \in E_{A, \beta_{\text{rem}}}^1$ such that
\begin{equation*}
    \bigg|v(t) - e^{At} v_{\infty, 1} - \int_{T_{\text{ode}}}^t e^{A(t-s)} F(s)ds \bigg| \leq C \langle \bar{t} \rangle^N e^{(\kappa_1 - \beta_{\text{rem}}) \bar{t}} (|v(T_{\text{ode}})| + e^{\kappa_1 T_{\text{ode}}}\|F\|_A )
\end{equation*}
for $t \geq T_{\text{ode}}$. Note that this estimate is of the form \eqref{estimate} for $n = 1$ and that \eqref{u estimate} also holds for $v_{\infty,1}$. 

We now proceed with the inductive step, so let us assume that we have an estimate as in \eqref{estimate} for some $n \geq 1$. We apply Lemma \ref{asymptotic lemma} to \eqref{estimate}. Note that in this case we have $\gamma = 1$ and $\alpha(T_{\text{ode}}) = |v(T_{\text{ode}})| + e^{\kappa_1 T_{\text{ode}}}\|F\|_A$. The claim follows from the conclusions of the lemma.
\end{proof}

\subsection{Specifying asymptotic data}

From now on we focus on the homogeneous version of \eqref{ode}; that is, $F = 0$. Let $1 \leq n \leq \mathcal{N}$. If we have a solution $v$ for which $\pi_j(v_{\infty,j})$ vanishes for $j = 1, \ldots,n-1$ then, by uniqueness of the $v_{\infty,j}$, the estimate \eqref{estimate} becomes
\begin{equation}
    |v(t) - e^{At}v_{\infty,n}| \leq C \langle \bar{t} \rangle^N e^{(\kappa_1 - n\beta_{\text{rem}}) \bar{t}} |v(T_{\text{ode}})|
\end{equation}
for $t \geq T_{\text{ode}}$, where $v_{\infty,n} = \pi_n(v_{\infty,n}) \in E_{A,\beta_{\text{rem}}}^n$. This suggests that in general, the new asymptotic information provided by $v_{\infty,n}$ is contained in $\pi_n(v_{\infty,n})$ only. Moreover, note that if $n > \mathcal{N}$ and $\pi_j(v_{\infty,j}) = 0$ for $j = 1, \ldots, \mathcal{N}$, then $v_{\infty,n} = 0$ (in fact, this would imply that $v$ is the zero solution; see Proposition~\ref{ode iso} below). Therefore, the $v_{\infty,n}$ for $n > \mathcal{N}$ do not provide any new asymptotic information. Motivated by this observation, we make the following definition.

\begin{definition} \label{asymptotic data}
    Let $v$ be a solution of \eqref{ode} with $F = 0$ and assume \eqref{arem estimate} holds. Then, if $n \leq \mathcal{N}$, the vector $\pi_n(v_{\infty,n}) \in E_{A,\beta_{\text{rem}}}^n$ is called the \emph{$n$-th order asymptotic data} for $v$.
\end{definition}

\begin{remark} \label{w data}
    Let $v$ be a solution of \eqref{ode} with $F = 0$ and assume that \eqref{arem estimate} holds. Then $w$, as defined in \eqref{w def}, satisfies Equation \eqref{theequation} with $G = 0$ and we can apply Proposition \ref{ode asymptotic lemma} to it. The $n$-th order asymptotic data for $w$ is given by
    \[
    (w_{\infty,n})_n = w_n(T_{\text{ode}}) + \int_{T_{\text{ode}}}^{\infty} e^{-J_n \bar{s}} (A_{\text{rest}} (w - \mathcal{F}_{\infty,n-1}) )_n(s)ds,
    \]
    where $\mathcal{F}_{\infty, n-1}(t) := e^{-\kappa_1 t} R(\bar{t}) T^{-1} F_{\infty, n-1}(t)$ and $F_{\infty, n-1}$ is related to $v$ as in the statement of Proposition \ref{ode asymptotic lemma}. To see this, follow the proof of Lemma \ref{asymptotic lemma} with $H = F_{\infty, n-1}$. Alternatively, we also have
    \[
        (w_{\infty,n})_n = \lim_{t \to \infty} \bigg( e^{-J_n \bar{t}} w_n(t) + \int_{T_{\text{ode}}}^t e^{-J_n \bar{s}}(A_{\text{rest}} \mathcal{F}_{\infty, n-1})_n(s)ds \bigg),
    \]
    which follows from \eqref{data for w}.
\end{remark}

Our objective in this section is to prove that if $1 \leq n \leq \mathcal{N}$ and $\pi_j(v_{\infty,j}) 
= 0$ for $j = 1, \ldots, n-1$, then $v_{\infty,n}$ can be specified. 

\begin{lemma} \label{specify ode data}
Consider \eqref{ode} with $F = 0$ and assume that \eqref{arem estimate} holds. Then for every ${1 \leq n \leq \mathcal{N}}$, there is an injective linear map $\Psi_{\infty,n}: E_{A,\beta_{\text{\emph{rem}}}}^n \to \mathbb{C}^k$ such that if $v_{\infty,n} \in E_{A,\beta_{\text{\emph{rem}}}}^n$ and $v$ is the solution to \eqref{ode} satisfying $v(T_{\text{\emph{ode}}}) = \Psi_{\infty,n}(v_{\infty,n})$, then $v$ satisfies
\begin{equation} \label{special estimate}
    |v(t) - e^{At}v_{\infty,n}| \leq C \langle \bar{t} \rangle^N e^{(\kappa_1 - n\beta_{\text{\emph{rem}}}) \bar{t}} |v(T_{\text{\emph{ode}}})|
\end{equation}
Furthermore, if $u_{\infty,n} \in E_{A,\beta_{\text{\emph{rem}}}}^n$, 
\begin{equation} \label{psi bound}
    |\Psi_{\infty,n}(e^{-A T_{\text{\emph{ode}}}} u_{\infty,n})| \leq C|u_{\infty,n}|.
\end{equation}
Finally, the constant $C$ appearing in these estimates only depends on $C_{\text{\emph{rem}}}$, $\beta_{\text{\emph{rem}}}$ and $A$, and the non-negative integer $N$ depends only on $k$. 
\end{lemma}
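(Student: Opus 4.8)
The plan is to work with the transformed equation \eqref{theequation} (with $G = 0$), since the asymptotic data for $v$ and $w$ are related by the fixed invertible linear map coming from \eqref{w def}. So it suffices to construct, for the equation $\dot{w} = Jw + A_{\text{rest}}w$, an injective linear map from the $n$-th block space into $\mathbb{C}^k$ with the two stated properties, and then conjugate back. The strategy is a contraction-mapping / fixed-point construction: given a target $w_{\infty,n} \in E^n_{A,\beta_{\text{rem}}}$ (viewed as a vector with only the $n$-th block nonzero), I would set up an integral equation whose solution is a $w$ satisfying \eqref{theequation} together with the decay estimate $|w(t) - e^{J_n \bar t} w_{\infty,n}| \lesssim \langle \bar t\rangle^N e^{-n\beta_{\text{rem}}\bar t}\,|w(T_{\text{ode}})|$, and then define $\Psi_{\infty,n}(v_{\infty,n}) := T R(\bar 0)^{-1} e^{\kappa_1 T_{\text{ode}}} w(T_{\text{ode}})$ after translating back to $v$. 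Concretely, one prescribes the ``initial condition at infinity'': the $n$-th block component should limit to $w_{\infty,n}$, the blocks $1,\dots,n-1$ (which grow at least as fast as $e^{(\kappa_1-(n-1)\beta_{\text{rem}})t}$ relative to $e^{\kappa_1 t}$, i.e. have $J_a$-part with $\kappa_{\min}$ large) must be killed by integrating their equations from $+\infty$, while the blocks $n+1,\dots,\mathcal N$ (decaying faster) are integrated from $T_{\text{ode}}$.

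In more detail, split $w = (w_<, w_n, w_>)$ according to blocks $<n$, $=n$, $>n$. The fixed-point map $\mathcal T$ sends a candidate $w$ (in a weighted sup-norm Banach space, weight $\langle \bar t\rangle^{-N} e^{n\beta_{\text{rem}}\bar t}$ around the ``free solution'' $e^{J_n\bar t}w_{\infty,n}$) to the function whose components are
\[
(\mathcal Tw)_n(t) = e^{J_n\bar t}w_{\infty,n} - \int_t^\infty e^{J_n(t-s)}(A_{\text{rest}}w)_n(s)\,ds,
\]
\[
(\mathcal Tw)_j(t) = -\int_t^\infty e^{J_j(t-s)}(A_{\text{rest}}w)_j(s)\,ds \quad (j < n),\qquad
(\mathcal Tw)_j(t) = \int_{T_{\text{ode}}}^t e^{J_j(t-s)}(A_{\text{rest}}w)_j(s)\,ds \quad (j > n).
\]
Using $\|A_{\text{rest}}(s)\| \le C e^{-\beta_{\text{rem}}\bar s}$ together with the norm bounds $\|e^{J_j t}\| \le C\langle t\rangle^{k}e^{(\kappa_{\max}(A_j)-\kappa_1)t}$ and the block separation inequalities $\kappa_1 - \kappa_{\min}(A_j) < j\beta_{\text{rem}}$, $\kappa_{\max}(A_j) - \kappa_1 \le -(j-1)\beta_{\text{rem}}$ from Definition \ref{generalized eigenspaces}, a routine computation shows each of these integrals converges and that $\mathcal T$ maps the weighted ball into itself and is a contraction (the contraction constant is controlled by $C_{\text{rem}}$, $\beta_{\text{rem}}$, $A$, and can be forced small by first passing to $t$ large — or by noting that the integral kernels already carry an extra $e^{-\beta_{\text{rem}}\bar s}$). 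The unique fixed point $w$ solves \eqref{theequation} and, by the $n$-block line of $\mathcal T$, satisfies exactly the decay estimate \eqref{special estimate} after transforming back; injectivity of $\Psi_{\infty,n}$ is immediate because $w_{\infty,n}$ is recovered from $w$ as $(w_{\infty,n})_n = \lim_{t\to\infty} e^{-J_n\bar t}w_n(t)$, i.e. $w_{\infty,n} = 0 \Rightarrow w \equiv 0 \Rightarrow v(T_{\text{ode}}) = 0$. The bound \eqref{psi bound} follows by tracking constants: $|w(T_{\text{ode}})| \le |w_{\infty,n}| + (\text{integral terms}) \le C|w_{\infty,n}|$ from the fixed-point estimate, and then undoing the substitution (with the harmless factor $e^{\kappa_1 T_{\text{ode}}}$ absorbed since $u_{\infty,n} = e^{\kappa_1 T_{\text{ode}}}T(w_{\infty,n},0)$ as in Lemma \ref{asymptotic lemma}, up to the block reshuffling), giving $|\Psi_{\infty,n}(e^{-AT_{\text{ode}}}u_{\infty,n})| \le C|u_{\infty,n}|$.

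The main obstacle I anticipate is the bookkeeping in the estimates rather than any conceptual difficulty: one must verify that the three groups of blocks interact correctly under $A_{\text{rest}}$ — in particular that the ``from $+\infty$'' integrals for $j \le n$ genuinely converge, which relies on the sharp separation $\kappa_1 - \kappa_{\min}(A_j) < n\beta_{\text{rem}}$ combined with the extra exponential decay $e^{-\beta_{\text{rem}}\bar s}$ of $A_{\text{rest}}$ beating the worst polynomial growth $\langle\bar s\rangle^N$ — and that all constants depend only on $C_{\text{rem}}$, $\beta_{\text{rem}}$, $A$ (not on the solution or on $T_{\text{ode}}$, once the weight is centered at $\bar t$). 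This is closely parallel to the estimates already carried out in the proof of Lemma \ref{asymptotic lemma} and in \cite[Chapter 9]{hans}, so I would lean on those; indeed the cleanest route may be to phrase the whole construction as a single application of a Duhamel/variation-of-constants argument on the stable-and-center part of the spectrum, and observe that Remark \ref{w data} already identifies the resulting asymptotic data as $(w_{\infty,n})_n = w_{\infty,n}$ by construction.
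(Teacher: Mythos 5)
Your construction is correct in outline, but it is genuinely different from the paper's argument. The paper proves the lemma by induction on $n$: the base case is imported from \cite[Lemma 9.21]{hans}, and the inductive step builds, at a sufficiently late time $t_0$, the nested kernels $W_{n+1}(t_0)=\ker(L_n(t_0))$ of the successive asymptotic-data maps, proves a ``slope'' estimate $|\eta_a|\leq|\eta_{n+1}|$ for data in $W_{n+1}(t_0)\times\{0\}$, deduces that the $(n+1)$-th data map restricted to this subspace is injective and hence (by dimension count) an isomorphism, and finally defines $\Psi_{\infty,n+1}$ by inverting it and flowing the data from $t_0$ back to $T_{\text{ode}}$. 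You instead construct the desired solution directly, for each fixed $n$, as the fixed point of a Lyapunov--Perron integral operator: the block $n$ carries the free term $e^{J_n\bar t}w_{\infty,n}$, the slower blocks $j<n$ are integrated from $+\infty$ (which is exactly what forces their asymptotic data to vanish), and the faster blocks $j>n$ are integrated from the initial time. The block separation inequalities from Definition~\ref{generalized eigenspaces} and the decay \eqref{arem estimate} do make all your integrals converge with the weight $\langle\bar t\rangle^{N}e^{-n\beta_{\text{rem}}\bar t}$, the fixed point solves \eqref{theequation}, depends linearly on $w_{\infty,n}$, and has $n$-th order data $w_{\infty,n}$ by the limit formula of Remark~\ref{w data}; so injectivity and \eqref{psi bound} come out as you say. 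What your route buys is uniformity in $n$ and independence from the base case in \cite{hans}; what the paper's route buys is that it never has to set up a Banach-space fixed point, only finite-dimensional linear algebra plus the integral estimates already present in Lemma~\ref{asymptotic lemma}.

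Two points need to be nailed down to make your sketch complete. First, your parenthetical claim that the contraction property follows ``by noting that the integral kernels already carry an extra $e^{-\beta_{\text{rem}}\bar s}$'' is not enough: without smallness of $C_{\text{rem}}$ the resulting constant need not be below $1$. You must use your other option — run the fixed point on $[t_0,\infty)$ with $t_0-T_{\text{ode}}$ large but bounded in terms of $C_{\text{rem}}$, $\beta_{\text{rem}}$, $A$ only, which produces the small factor $e^{-\beta_{\text{rem}}(t_0-T_{\text{ode}})}$ — and then extend the solution back to $T_{\text{ode}}$ by the (two-sidedly bounded) flow of the linear ODE; this is the same device the paper uses. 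Second, \eqref{special estimate} has $|v(T_{\text{ode}})|$ on the right, while your weighted-ball estimate is phrased in terms of $|w_{\infty,n}|$; you therefore also need the reverse bound $|w_{\infty,n}|\leq C|w(T_{\text{ode}})|$. This follows from the same smallness: at $t_0$ the fixed-point estimate gives $|w(t_0)|\geq\tfrac12|w_{\infty,n}|$, and the bounded flow on $[T_{\text{ode}},t_0]$ transfers this to $T_{\text{ode}}$. With these two additions your argument yields the lemma with constants depending only on $C_{\text{rem}}$, $\beta_{\text{rem}}$ and $A$, as required.
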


\begin{proof}
First note that the statement for $n = 1$ is given by \cite[Lemma 9.21]{hans}. Indeed, note that the subspace $E_{A, \beta_{\text{rem}}}^1$ is equal to $E_{A,\beta}$ from \cite[Lemma 9.21]{hans} if we let $\beta = \beta_{\text{rem}}$. Thus we can define $\Psi_{\infty,1} = \Psi_{\infty}$, where $\Psi_{\infty}$ is the map given by \cite[Lemma 9.21]{hans}.  

Now we make the following inductive assumption. Assume that the statement of the Lemma is true for some $n \geq 1$. Furthermore, assume there is a $t_n \geq T_{\text{ode}}$, such that $t_n - T_{\text{ode}}$ is bounded by a constant depending only on $C_{\text{rem}}$, $\beta_{\text{rem}}$ and $A$; and for $j = 1, \ldots, n$, that there are subspaces $W_j(s) \subset \mathbb{C}^{k_1 + \cdots + k_j}$ of dimension $k_j$ for $s \geq t_n$ such that solutions $w$, as defined in \eqref{w def}, to \eqref{theequation} with initial data in $W_j(s) \times \{0\}$ at time $s$, have vanishing asymptotic data of order $1$ to $j-1$ and the map that takes initial data in $W_j(s) \times \{0\}$ to asymptotic data of order $j$ is a linear isomorphism. 

Note that for $n = 1$, the inductive assumption requires $W_1(s) = \mathbb{C}^{k_1}$, and the proof of this statement is contained in the proof of \cite[Lemma 9.21]{hans}. Specifically, $\bar{u}_a(t)$ in the notation of \cite{hans} is equal to $e^{-J_1 \bar{t}}w_1(t)$ in our notation (recall that our notation for $J$ was introduced in the comments after Definition~\ref{generalized eigenspaces}), and the map that fulfills the assumption just described for $n = 1$ is $L_a$ as defined in \cite[(9.52), p. 134]{hans}.

Since $\| A_{\text{rem}} \| \leq C_{\text{rem}} e^{-\beta_{\text{rem}}\bar{t_0}} e^{-\beta_{\text{rem}}(t-t_0)} \leq C_{\text{rem}} e^{-\beta_{\text{rem}}(t-t_0)}$ for $t_0 \geq T_{\text{ode}}$, we can work with initial data at any $t_0 \geq T_{\text{ode}}$ instead of $T_{\text{ode}}$ without worrying about the dependence of the constants in the estimates. By integrating \eqref{theequation}, but from some $t_0 \geq t_n$, we get
\begin{equation} \label{bequation}
    w(t) = e^{J(t - t_0)} w(t_0) + e^{Jt} \int_{t_0}^t e^{-Js} (A_{\text{rest}} w)(s) ds.
\end{equation}
Define the map $L_1(t_0): \mathbb{C}^{k_1 + \cdots + k_{n+1}} \to \mathbb{C}^{k_1}$ by
\[
L_1(t_0)(\eta) = \lim_{t \to \infty} e^{-J_1 (t - t_0)} w_1(t),
\]
where $w$ is the solution of \eqref{theequation} such that $w(t_0) = (\eta,0)$. This map gives the first order asymptotic data for $w$ (see Remark \ref{w data} with $T_{\text{ode}}$ replaced by $t_0$). Then, by the inductive assumption, $L_1(t_0)$ is a surjective linear map. We consider the space $\ker(L_1(t_0))$. It has dimension $k_2 + \cdots + k_{n+1}$ and solutions $w$ with initial data $w(t_0) \in \ker(L_1(t_0)) \times \{0\}$ have vanishing asymptotic data of order 1. Now consider the map $L_2(t_0): \ker (L_1(t_0)) \to \mathbb{C}^{k_2}$,
\[
L_2(t_0)(\eta) = \lim_{t \to \infty} e^{-J_2 (t - t_0)} w_2(t), \qquad w(t_0) = (\eta,0).
\]
This map gives the second order asymptotic data for $w$. Note that, since $W_2(t_0) \times \{0\}$ is a subspace of $\ker (L_1(t_0))$, $L_2(t_0)$ is surjective and $\ker (L_2(t_0))$ has dimension $k_3 + \cdots + k_{n+1}$. By continuing this process we eventually get to $W_{n+1}(t_0) = \ker (L_n(t_0))$, which has dimension $k_{n+1}$ and solutions $w$ with initial data $w(t_0) \in W_{n+1}(t_0) \times \{0\}$ have vanishing asymptotic data of order $1$ to $n$. Since the largest real part of an eigenvalue of $J$ in \eqref{theequation} is zero, the $n$-th order asymptotic estimate \eqref{estimate} then becomes
\begin{equation} \label{first estimate}
    |w(t)| \leq C \langle t - t_0 \rangle^N e^{-n\beta_{\text{rem}}(t - t_0)} |w(t_0)|
\end{equation}
when obtained from data at $t_0$; here $C$ only depends on $C_{\text{rem}}$, $\beta_{\text{rem}}$ and $A$.   

Now we want to have some control over the ``slope'' of the subspace $W_{n+1}(t_0)$. Let ${\eta = (\eta_a, \eta_{n+1}) \in W_{n+1}(t_0)}$, where $\eta_a \in \mathbb{C}^{k_1 + \cdots + k_n}$, and let $w$ be the solution of \eqref{theequation} with ${w(t_0) = (\eta,0)}$. Then \eqref{bequation} implies
\[
0 = \lim_{t \to \infty} e^{-J_a(t-t_0)} w_a(t) = w_a(t_0) + \int_{t_0}^{\infty} e^{-J_a(s-t_0)} (A_{\text{rest}} w)_a (s)ds. 
\]
Therefore
\[
\begin{split}
    |\eta_a| &\leq \int_{t_0}^{\infty} |e^{-J_a(s-t_0)} (A_{\text{rest}} w)_a(s)| ds\\
    &\leq C \int_{t_0}^{\infty} \langle s-t_0 \rangle^N e^{(\kappa_1 - \kappa_{\text{min}}(A_n)) (s-t_0)} e^{-\beta_{\text{rem}} (s - T_{\text{ode}})} e^{-n \beta_{\text{rem}} (s-t_0)} ds |\eta|\\
    &\leq C e^{-\beta_{\text{rem}} (t_0 - T_{\text{ode}})} \int_{t_0}^{\infty} \langle s-t_0 \rangle^N e^{(\kappa_1 - \kappa_{\text{min}}(A_n)) (s-t_0)} e^{-(n+1) \beta_{\text{rem}} (s-t_0)} ds (|\eta_a| + |\eta_{n+1}|)\\
    &\leq C e^{-\beta_{\text{rem}} (t_0 - T_{\text{ode}})} \int_{t_0}^{\infty} \langle s-t_0 \rangle^N e^{-\beta_{\text{rem}} (s-t_0)} ds (|\eta_a| + |\eta_{n+1}|)
\end{split}
\]
where $C$ depends only on $C_{\text{rem}}$, $\beta_{\text{rem}}$ and $A$; $N$ depends only on $k$; and we have used \eqref{first estimate}. Since the value of the integral is actually independent of $t_0$, given $\varepsilon > 0$, we can thus take $t_0$ large enough such that
\[
|\eta_a| \leq \varepsilon(|\eta_a| + |\eta_{n+1}|).
\]
By taking such a $t_0$ for $\varepsilon \leq 1/2$, we obtain the required slope estimate,
\begin{equation} \label{slope}
    |\eta_a| \leq |\eta_{n+1}|.
\end{equation}
Now let $L_{n+1}: W_{n+1}(t_0) \to \mathbb{C}^{k_{n+1}}$ be defined by
\[
L_{n+1}(\eta) = \lim_{t \to \infty} e^{-J_{n+1}(t-t_0)} w_{n+1}(t),
\]
where $w$ is the solution of \eqref{theequation} such that $w(t_0) = (\eta,0)$. As above, this is well defined because initial data in $W_{n+1}(t_0) \times \{0\}$ at $t_0$ gives rise to solutions with vanishing asymptotic data of orders $1$ to $n$. This map gives the $(n+1)$-th order asymptotic data for $w$. Also from \eqref{bequation} and \eqref{first estimate} we obtain the estimate
\[
\begin{split}
    |L_{n+1}(\eta) \, - \, &w_{n+1}(t_0)|\\
    &\leq \left| \int_{t_0}^{\infty} e^{-J_{n+1}(s-t_0)} (A_{\text{rest}} w)_{n+1} (s) ds \right|\\ 
    &\leq C \int_{t_0}^{\infty} \langle s - t_0 \rangle^N e^{(\kappa_1 - \kappa_{\text{min}}(A_{n+1})) (s - t_0)} e^{-\beta_{\text{rem}} (s - T_{\text{ode}})} e^{-n\beta_{\text{rem}} (s - t_0)} ds |w(t_0)|\\
    &\leq C e^{-\beta_{\text{rem}} (t_0 - T_{\text{ode}})}  \int_{t_0}^{\infty} \langle s-t_0 \rangle^N e^{( \kappa_1 - \kappa_{\text{min}}(A_{n+1}) - (n+1)\beta_{\text{rem}} ) (s-t_0)} ds |w(t_0)|,
\end{split}
\]
where $C$ depends only on $C_{\text{rem}}$, $\beta_{\text{rem}}$ and $A$, and $N$ depends only on $k$. Since the exponential inside the integral is decaying, and the value of the integral is actually independent of $t_0$, given $\varepsilon > 0$, we can take $t_0$ large enough to get
\[
|L_{n+1}(\eta) - \eta_{n+1}| \leq \varepsilon(|\eta_a| + |\eta_{n+1}|),
\]
and by using \eqref{slope},
\[
\begin{split}
    |L_{n+1}(\eta)| &\geq (1-\varepsilon)|\eta_{n+1}| - \varepsilon|\eta_a|\\
    &\geq (1-2\varepsilon)|\eta_{n+1}|\\
    &\geq \frac{1}{2}(1-2\varepsilon)(|\eta_a| + |\eta_{n+1}|),
\end{split}
\]
assuming $\varepsilon < 1/2$. We obtain that for an appropriate $t_0$, there is a $C > 0$ such that
\[
|L_{n+1}(\eta)| \geq C|\eta|,
\]
which means that $L_{n+1}$ is a linear injective map between spaces of the same dimension, so it is an isomorphism. Since all of the above is true for any $s \geq t_0$, the subspace $W_{n+1}(s)$ can be defined for any $s \geq t_0$. At this point, we can set $t_{n+1} = t_0$. The map $L_{n+1}^{-1}:\mathbb{C}^{k_{n+1}} \to W_{n+1}(t_0)$, then maps $(n+1)$-th order asymptotic data to initial data at $t_0$ such that the corresponding solution has vanishing asymptotic data of order 1 to $n$. For $L_{n+1}^{-1}$ we have the estimate
\[
|L_{n+1}^{-1}(\eta)| \leq \frac{1}{C} |\eta|.
\]
The remaining step is to translate this to data at $T_{\text{ode}}$ and write everything in terms of $v$. We have
\[
v_{\infty,n+1} = e^{-AT_{\text{ode}}} e^{\kappa_1 T_{\text{ode}}} T 
\begin{pmatrix}
    w_{\infty}\\
    0
\end{pmatrix},
\]
where $(w_{\infty},0) = (x, \xi, 0)$ for some $x \in \mathbb{C}^{k_1 + \cdots + k_n}$ and $\xi \in \mathbb{C}^{k_{n+1}}$, and $w_{\infty}$ is defined as in the proof of Lemma~\ref{asymptotic lemma}, so that $\xi$ is asymptotic data of order $n+1$ for $w$ obtained from initial data at $T_{\text{ode}}$. Note that in the present setting, $w$ arises from initial data in $W_{n+1}(t_0) \times \{0\}$ at $t_0$, which implies $x = 0$. We need to know how this relates to the map $L_{n+1}$. Explicitly, we have
\[
\xi = \lim_{t \to \infty} e^{-J_{n+1} \bar{t}} w_{n+1}(t).
\]
Hence
\[
\xi = e^{-J_{n+1} \bar{t_0}} L_{n+1}(\eta),
\]
where $w(t_0) = (\eta,0)$ and $\eta \in W_{n+1}(t_0)$. Now we can define the map $\Psi_{\infty,n+1}$. Given ${v_{\infty,n+1} \in E_{A,\beta_{\text{rem}}}^{n+1}}$, let $\xi \in \mathbb{C}^{k_{n+1}}$ be the unique vector such that
\[
v_{\infty,n+1} = e^{-AT_{\text{ode}}} e^{\kappa_1 T_{\text{ode}}} T 
\begin{pmatrix}
    0\\
    \xi\\
    0
\end{pmatrix}.
\]
Let $w$ be the solution of \eqref{theequation} with
\[
w(t_0) = 
\begin{pmatrix}
    L_{n+1}^{-1} (e^{J_{n+1} \bar{t_0}} \xi)\\
    0
\end{pmatrix},
\]
and set $\Psi_{\infty,n+1}(v_{\infty,n+1}) := e^{\kappa_1 T_{\text{ode}}} T w(T_{\text{ode}})$. Clearly $\Psi_{\infty,n+1}$ is linear and injective. In order to obtain \eqref{psi bound}, note that $t_0 - T_{\text{ode}}$ can be bounded by a constant depending only on $C_{\text{rem}}$, $\beta_{\text{rem}}$ and $A$. Indeed, our choice of $t_0$ is dictated by the condition $C e^{-\beta_{\text{rem}}(t_0 - T_{\text{ode}})} < 1/2$ for some constant $C$ depending only on $C_{\text{rem}}$, $\beta_{\text{rem}}$ and $A$. If this condition is satisfied by $t_0 = t_n$, we can just make this choice and the bound comes from the inductive assumption. Otherwise, we can choose $t_0$ such that
\[
\frac{1}{\beta_{\text{rem}}} \ln 2C < t_0 - T_{\text{ode}} \leq \frac{1}{\beta_{\text{rem}}} \ln 2C + 1. 
\]
Now we can use \eqref{theequation} to estimate,
\[
    \frac{d}{dt} |w|^2 = 2 \langle \dot{w}, w \rangle \geq -2(\|J\| + C e^{-\beta_{\text{rem}} \bar{t}}) |w|^2,
\]
where $C$ depends only on $C_{\text{rem}}$, $\beta_{\text{rem}}$ and $A$. By applying Grönwall's inequality (with time reversed) and using the bound on $t_0 - T_{\text{ode}}$, we obtain that there is a constant $C$ with the same dependence, such that for all solutions $w$ of \eqref{theequation},  
\[
    |w(T_{\text{ode}})| \leq C|w(t_0)|.
\]
Now let $v_{\infty,n+1} = e^{-A T_{\text{ode}}} u_{\infty,n+1}$, then
\begin{align*}
    |\Psi_{\infty,n+1}(e^{-A T_{\text{ode}}}u_{\infty,n+1})| &\leq C e^{\kappa_1 T_{\text{ode}}} |w(T_{\text{ode}})|\\
    &\leq C e^{\kappa_1 T_{\text{ode}}} |w(t_0)|\\
    &= C e^{\kappa_1 T_{\text{ode}}} |L_{n+1}^{-1}( e^{J_{n+1} \bar{t_0}} \xi )|\\
    &\leq C e^{\kappa_1 T_{\text{ode}}} |\xi|\\
    &\leq C |u_{\infty,n+1}|,
\end{align*}
where $C$ only depends on $C_{\text{rem}}$, $\beta_{\text{rem}}$ and $A$. By construction, it is clear that the map $\Psi_{\infty,n+1}$ does what it is supposed to.
\end{proof}

\begin{remark}
Note that the estimate \eqref{special estimate} implies that $\pi_j(v_{\infty,j})$ vanishes for $j = 1, \ldots,n-1$.
\end{remark}

\subsection{Isomorphism between initial data and asymptotic data}

We are now ready to show that solutions to \eqref{ode}, are completely determined by the asymptotic data of orders $1$ to $\mathcal{N}$ as defined in Definition \ref{asymptotic data}. For that purpose, we collect all of the asymptotic data in a single vector.

\begin{definition}
    Let $v$ be a solution to \eqref{ode} and assume that \eqref{arem estimate} and \eqref{inhomogeneous assumption} hold. Define $V_{\infty} \in \mathbb{C}^k$ by
    \[
    V_{\infty} := v_{\infty,1} + \pi_2(v_{\infty,2}) + \cdots + \pi_{\mathcal{N}}(v_{\infty, \,
    \mathcal{N}}).
    \]
    Then $V_{\infty}$ is called \emph{asymptotic data} for $v$.
\end{definition}

We begin by proving that asymptotic data can be added when $F = 0$. 

\begin{lemma} \label{add data}
    Let $v$ and $\tilde{v}$ be solutions to \eqref{ode} with $F = 0$ and assume that \eqref{arem estimate} holds. If the asymptotic data for $v$ and $\tilde{v}$ are given by $V_{\infty}$ and $\widetilde{V}_{\infty}$ respectively, then $v + \tilde{v}$ has asymptotic data given by $V_{\infty} + \widetilde{V}_{\infty}$.
\end{lemma}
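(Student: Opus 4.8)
The plan is to exploit linearity of Equation~\eqref{ode} (with $F=0$) together with the uniqueness of the asymptotic data of each order, proceeding by induction on the order $n$ to show that $\pi_n\big((v+\tilde v)_{\infty,n}\big) = \pi_n(v_{\infty,n}) + \pi_n(\tilde v_{\infty,n})$ for $1 \le n \le \mathcal{N}$. The key observation is that the recursive approximants $F_{\infty,n}$ from Proposition~\ref{ode asymptotic lemma} are \emph{affine-linear} in the data: given that $F_{\infty,n-1}$ and $\widetilde F_{\infty,n-1}$ are the order-$(n-1)$ approximants for $v$ and $\tilde v$, the function $F_{\infty,n-1} + \widetilde F_{\infty,n-1}$ is built from the integral operator $\int_{T_{\text{ode}}}^t e^{A(t-s)} A_{\text{rem}}(\cdot)(s)\,ds$ applied to $F_{\infty,n-2}+\widetilde F_{\infty,n-2}$, plus $e^{At}(v_{\infty,n-1}+\tilde v_{\infty,n-1})$, since $F=0$ kills the inhomogeneous term. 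So I would first set up the induction hypothesis that $F_{\infty,n-1} + \widetilde F_{\infty,n-1}$ serves as a valid order-$(n-1)$ approximant for $v+\tilde v$ with the associated $V_{\infty,n-1}$-vector equal to $v_{\infty,n-1}+\tilde v_{\infty,n-1}$, then add the two estimates \eqref{estimate} of order $n-1$ to get the estimate \eqref{estimate} for $v+\tilde v$ with approximant $F_{\infty,n-1}+\widetilde F_{\infty,n-1}$.

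Next I would invoke Lemma~\ref{asymptotic lemma} applied to this summed estimate: the lemma produces a \emph{unique} $v_{\infty} \in E^1 \oplus \cdots \oplus E^n$ improving the order-$(n-1)$ estimate to an order-$n$ estimate, with the new approximant equal to $e^{At}v_\infty + \int_{T_{\text{ode}}}^t e^{A(t-s)} A_{\text{rem}}(F_{\infty,n-1}+\widetilde F_{\infty,n-1})(s)\,ds$ (again the $F$-term drops out). On the other hand, adding the individual order-$n$ estimates for $v$ and $\tilde v$ gives an order-$n$ estimate for $v+\tilde v$ with approximant $F_{\infty,n} + \widetilde F_{\infty,n}$, whose $e^{At}(\cdot)$ part has coefficient $v_{\infty,n} + \tilde v_{\infty,n} \in E^1\oplus\cdots\oplus E^n$, and whose integral part is exactly $\int_{T_{\text{ode}}}^t e^{A(t-s)}A_{\text{rem}}(F_{\infty,n-1}+\widetilde F_{\infty,n-1})(s)\,ds$ by the induction hypothesis. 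By the uniqueness clause in Lemma~\ref{asymptotic lemma} (or equivalently by the uniqueness of $v_{\infty,n}$ in Proposition~\ref{ode asymptotic lemma}), the two $E^1\oplus\cdots\oplus E^n$-vectors must coincide, so $(v+\tilde v)_{\infty,n} = v_{\infty,n}+\tilde v_{\infty,n}$; applying $\pi_n$ gives the claim for order $n$. Summing over $n = 1,\ldots,\mathcal{N}$ yields $V_\infty + \widetilde V_\infty$ as the asymptotic data of $v+\tilde v$.

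The main obstacle, and the point requiring care, is bookkeeping the induction cleanly: one must phrase the hypothesis not merely as "the asymptotic data add" but as the stronger statement "$F_{\infty,n}+\widetilde F_{\infty,n}$ is \emph{the} order-$n$ approximant for $v+\tilde v$ in the sense of Proposition~\ref{ode asymptotic lemma}," because the uniqueness argument at step $n$ needs to compare integral terms built from the order-$(n-1)$ approximants, and only the strengthened hypothesis delivers that those integral terms agree. The base case $n=0$ is trivial since $F_{\infty,0}=\widetilde F_{\infty,0}=0$. One should also note that the case $n > \mathcal{N}$ needs no separate treatment: $\pi_n$ is only defined for $n \le \mathcal{N}$, and $V_\infty$ only collects data up to order $\mathcal{N}$, so once the statement holds through order $\mathcal{N}$ we are done. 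No genuinely hard estimate appears here — the content is entirely the linearity of the construction plus the uniqueness already established.
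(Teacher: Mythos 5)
Your proposal is correct and follows essentially the same route as the paper: an induction in which the strengthened hypothesis is that $F_{\infty,n}+\widetilde F_{\infty,n}$ is the order-$n$ approximant for $v+\tilde v$, so that the uniqueness of the vector in $E^1\oplus\cdots\oplus E^{n+1}$ (from Lemma~\ref{asymptotic lemma}/Proposition~\ref{ode asymptotic lemma}) forces it to equal $v_{\infty,n+1}+\tilde v_{\infty,n+1}$. The point you flag about needing the integral terms to agree before invoking uniqueness is exactly the care taken in the paper's argument.
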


\begin{proof}
We must do this inductively. Let
\[
\begin{split}
V_{\infty} &= \pi_1(V_{\infty}) + \cdots + \pi_{\mathcal{N}}(V_{\infty}),\\
\widetilde{V}_{\infty} &= \pi_1(\widetilde{V}_{\infty}) + \cdots + \pi_{\mathcal{N}}(\widetilde{V}_{\infty}).
\end{split}
\]
We begin with the leading order estimates. For $v$ and $\tilde{v}$ we have
\[
\begin{split}
    |v(t) - e^{At} \pi_1(V_{\infty})| &\leq C \langle \bar{t} \rangle^N e^{(\kappa_1 - \beta_{\text{rem}}) \bar{t}} |v(T_{\text{ode}})|,\\
    |\tilde{v}(t) - e^{At} \pi_1(\widetilde{V}_{\infty})| &\leq C \langle \bar{t} \rangle^N e^{(\kappa_1 - \beta_{\text{rem}}) \bar{t}} |\tilde{v}(T_{\text{ode}})|;
\end{split}
\]
which together imply
\[
|v(t) + \tilde{v}(t) - e^{At} (\pi_1(V_{\infty}) + \pi_1(\widetilde{V}_{\infty}))| \leq C \langle \bar{t} \rangle^N e^{(\kappa_1 - \beta_{\text{rem}}) \bar{t}} (|v(T_{\text{ode}})| + |\tilde{v}(T_{\text{ode}})|). 
\]
On the other hand, the estimate for $v + \tilde{v}$ is
\[
\begin{split}
    |v(t) + \tilde{v}(t) - e^{At} x| &\leq C \langle \bar{t} \rangle^N e^{(\kappa_1 - \beta_{\text{rem}}) \bar{t}} |v(T_{\text{ode}}) + \tilde{v}(T_{\text{ode}})|\\
    &\leq C \langle \bar{t} \rangle^N e^{(\kappa_1 - \beta_{\text{rem}}) \bar{t}} (|v(T_{\text{ode}})| + |\tilde{v}(T_{\text{ode}})|),
\end{split}
\]
where $x \in E_{A,\beta_{\text{rem}}}^1$. But then, uniqueness implies $x = \pi_1(V_{\infty}) + \pi_1(\widetilde{V}_{\infty})$. 

Now for the inductive step, consider the $(n+1)$-th estimates for $v$ and $\tilde{v}$;
\[
\begin{split}
    \left| v(t) - e^{At} v_{\infty, n+1} - \int_{T_{\text{ode}}}^t e^{A(t-s)} A_{\text{rem}} F_{\infty,n}(s)ds \right| &\leq C \langle \bar{t} \rangle^N e^{(\kappa_1 - (n+1)\beta_{\text{rem}}) \bar{t}} |v(T_{\text{ode}})|,\\
    \left| \tilde{v}(t) - e^{At} \tilde{v}_{\infty, n+1} - \int_{T_{\text{ode}}}^t e^{A(t-s)} A_{\text{rem}} \widetilde{F}_{\infty,n}(s)ds \right| &\leq C \langle \bar{t} \rangle^N e^{(\kappa_1 - (n+1)\beta_{\text{rem}}) \bar{t}} |\tilde{v}(T_{\text{ode}})|.
\end{split}
\]
Here $\pi_{n+1}(v_{\infty, n+1}) = \pi_{n+1}(V_{\infty})$ and $\pi_{n+1}(\tilde{v}_{\infty, n+1}) = \pi_{n+1}(\widetilde{V}_{\infty})$. Assume the $n$-th estimate for $v+\tilde{v}$ is
\[
|v(t) + \tilde{v}(t) - (F_{\infty,n} + \widetilde{F}_{\infty,n})(t)| \leq C \langle \bar{t} \rangle^N e^{(\kappa_1 - n\beta_{\text{rem}}) \bar{t}} |v(T_{\text{ode}}) + \tilde{v}(T_{\text{ode}})|.
\]
Thus the $(n+1)$-th estimate is 
\[
\begin{split}
    \bigg|v(t) + \tilde{v}(t) - e^{At} x - \int_{T_{\text{ode}}}^t e^{A(t-s)} &A_{\text{rem}} (F_{\infty,n} + \widetilde{F}_{\infty,n} )(s)ds\bigg|\\
    &\leq C \langle \bar{t} \rangle^N e^{(\kappa_1 - (n+1)\beta_{\text{rem}}) \bar{t}} |v(T_{\text{ode}}) + \tilde{v}(T_{\text{ode}})|\\
    &\leq C \langle \bar{t} \rangle^N e^{(\kappa_1 - (n+1)\beta_{\text{rem}}) \bar{t}} (|v(T_{\text{ode}})| + |\tilde{v}(T_{\text{ode}})|)
\end{split}
\]
for some $x \in E_{A,\beta_{\text{rem}}}^1 \oplus \cdots \oplus E_{A,\beta_{\text{rem}}}^{n+1}$. But the estimates for $v$ and $\tilde{v}$ imply
\[
\begin{split}
    \bigg|v(t) + \tilde{v}(t) - e^{At} (v_{\infty, n+1} + \tilde{v}_{\infty, n+1}) - \int_{T_{\text{ode}}}^t &e^{A(t-s)} A_{\text{rem}} (F_{\infty,n} + \widetilde{F}_{\infty,n} )(s)ds\bigg|\\
    &\leq C \langle \bar{t} \rangle^N e^{(\kappa_1 - (n+1)\beta_{\text{rem}}) \bar{t}} (|v(T_{\text{ode}})| + |\tilde{v}(T_{\text{ode}})|).
\end{split}
\]
Therefore, uniqueness yields $x = v_{\infty, n+1} + \tilde{v}_{\infty, n+1}$ and we can write the estimate for $v + \tilde{v}$ as 
\[
    |v(t) + \tilde{v}(t) - (F_{\infty,n+1} + \widetilde{F}_{\infty,n+1})(t)|
    \leq C \langle \bar{t} \rangle^N e^{(\kappa_1 - (n+1)\beta_{\text{rem}}) \bar{t}} |v(T_{\text{ode}}) + \tilde{v}(T_{\text{ode}})|.
\]
In particular,
\[
\pi_{n+1}(x) = \pi_{n+1}(v_{\infty, n+1}) + \pi_{n+1}(\tilde{v}_{\infty, n+1}) = \pi_{n+1}(V_{\infty}) + \pi_{n+1}(\widetilde{V}_{\infty}). \qedhere
\]
\end{proof}

We can now proceed with the main result of this section.

\begin{proposition} \label{ode iso}
    Consider Equation \eqref{ode} with $F = 0$ and assume that \eqref{arem estimate} holds. Then there is a linear isomorphism
    \[
    \Psi_{\infty}: \mathbb{C}^k \to \mathbb{C}^k
    \]
    such that if $U_{\infty} \in \mathbb{C}^k$,
    \[
    |\Psi_{\infty}(e^{-A T_{\text{\emph{ode}}}} U_{\infty})| \leq C |U_{\infty}|,
    \]
    where $C$ depends only on $C_{\text{\emph{rem}}}$, $\beta_{\text{\emph{rem}}}$ and $A$. Furthermore, if $V_{\infty} \in \mathbb{C}^k$ and $v$ is the solution of \eqref{ode} with $v(T_{\text{\emph{ode}}}) = \Psi_{\infty}(V_{\infty})$, then $v$ has asymptotic data given by $V_{\infty}$. 
\end{proposition}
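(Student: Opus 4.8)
The plan is to build $\Psi_{\infty}$ by assembling the partial maps $\Psi_{\infty,n}$ from Lemma~\ref{specify ode data} and exploiting the additivity of asymptotic data from Lemma~\ref{add data}. First I would set, for a given $V_{\infty} \in \mathbb{C}^k$, its decomposition $V_{\infty} = \pi_1(V_{\infty}) + \cdots + \pi_{\mathcal{N}}(V_{\infty})$ with $\pi_n(V_{\infty}) \in E_{A,\beta_{\text{rem}}}^n$, and define
\[
\Psi_{\infty}(V_{\infty}) := \sum_{n=1}^{\mathcal{N}} \Psi_{\infty,n}(\pi_n(V_{\infty})).
\]
This is manifestly linear. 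To see it does what is wanted, let $v^{(n)}$ be the solution with $v^{(n)}(T_{\text{ode}}) = \Psi_{\infty,n}(\pi_n(V_{\infty}))$; by Lemma~\ref{specify ode data}, $v^{(n)}$ satisfies the estimate \eqref{special estimate} with asymptotic vector $\pi_n(V_{\infty})$, and (by the remark following that lemma) $\pi_j(v^{(n)}_{\infty,j}) = 0$ for $j < n$. Hence the $n$-th order asymptotic data of $v^{(n)}$ is $\pi_n(V_{\infty})$ and all lower-order data vanish. By linearity of \eqref{ode}, $v := \sum_n v^{(n)}$ is the solution with $v(T_{\text{ode}}) = \Psi_{\infty}(V_{\infty})$, and by iterating Lemma~\ref{add data} its asymptotic data is $\sum_n \pi_n(V_{\infty}) = V_{\infty}$, which is the required property.

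It remains to show $\Psi_{\infty}$ is an isomorphism and to establish the stated bound. For the bound, write $V_{\infty} = e^{-AT_{\text{ode}}} U_{\infty}$; then $\pi_n(V_{\infty}) = e^{-AT_{\text{ode}}} \pi_n^A(U_{\infty})$ where $\pi_n^A$ is the projection onto $E_{A,\beta_{\text{rem}}}^n$ commuting with $e^{AT_{\text{ode}}}$ (the $E^n$ are $A$-invariant), so writing $\pi_n(V_{\infty}) = e^{-AT_{\text{ode}}} u_{\infty,n}$ with $u_{\infty,n} := \pi_n^A(U_{\infty})$ and applying \eqref{psi bound} gives
\[
|\Psi_{\infty}(e^{-AT_{\text{ode}}}U_{\infty})| \leq \sum_{n=1}^{\mathcal{N}} |\Psi_{\infty,n}(e^{-AT_{\text{ode}}} u_{\infty,n})| \leq C \sum_{n=1}^{\mathcal{N}} |u_{\infty,n}| \leq C|U_{\infty}|,
\]
with $C$ depending only on $C_{\text{rem}}$, $\beta_{\text{rem}}$ and $A$. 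For injectivity, suppose $\Psi_{\infty}(V_{\infty}) = 0$; then the corresponding solution $v$ is identically zero, so all its asymptotic data vanish, i.e. $\pi_n(V_{\infty}) = 0$ for every $n$, hence $V_{\infty} = 0$. Since $\Psi_{\infty}$ is an injective linear endomorphism of the finite-dimensional space $\mathbb{C}^k$, it is an isomorphism.

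The step I expect to require the most care is the bookkeeping that $v = \sum_n v^{(n)}$ genuinely has \emph{each} $\pi_n(v_{\infty,n}) = \pi_n(V_{\infty})$: this relies on Lemma~\ref{add data} applied not just to two summands but inductively to $\mathcal{N}$ of them, and on the fact (from the remark after Lemma~\ref{specify ode data}) that the lower-order data of each $v^{(n)}$ vanish, so that no cross-contamination occurs between orders. One should also check the compatibility between the $E^n$-projections at time $T_{\text{ode}}$ and the asymptotic projections $\pi_n$ used in the definition of asymptotic data — this is where invariance of $E_{A,\beta_{\text{rem}}}^n$ under $e^{At}$ and the structure established in Definition~\ref{generalized eigenspaces} are used, but it is routine. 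The essential content is entirely carried by Lemmas~\ref{specify ode data} and~\ref{add data}; the present proposition is their organizational consequence.
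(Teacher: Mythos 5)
Your construction has a genuine gap at exactly the point you flag as ``bookkeeping''. Lemma~\ref{specify ode data} guarantees that the solution $v^{(n)}$ with $v^{(n)}(T_{\text{ode}}) = \Psi_{\infty,n}(\pi_n(V_{\infty}))$ has vanishing asymptotic data of orders $1,\dots,n-1$ and $n$-th order data $\pi_n(V_{\infty})$, but it says nothing about its asymptotic data of orders $n+1,\dots,\mathcal{N}$, which in general do \emph{not} vanish. Consequently, for the sum $v=\sum_n v^{(n)}$ the order-$m$ data is $\pi_m(V_{\infty})$ plus the order-$m$ contributions of all $v^{(n)}$ with $n<m$; the contamination is upward, not downward, and vanishing of the lower-order data of each $v^{(n)}$ does not prevent it. So the map $\Psi_{\infty}(V_{\infty}):=\sum_n \Psi_{\infty,n}(\pi_n(V_{\infty}))$ need not send $V_{\infty}$ to initial data whose solution has asymptotic data $V_{\infty}$, and your injectivity argument (which presupposes that property) inherits the same gap.

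The paper's proof repairs precisely this defect by a triangular correction scheme: with $\mathcal{L}_n$ the map taking initial data at $T_{\text{ode}}$ to $n$-th order asymptotic data (bounded by \eqref{u estimate}), and $\widetilde{\Psi}_{\infty,n}(\xi):=\Psi_{\infty,n}(e^{-AT_{\text{ode}}}\xi)$, one sets $U_{\infty}=e^{AT_{\text{ode}}}V_{\infty}$, $D_1:=\pi_1(U_{\infty})$ and recursively
\[
D_n := \pi_n(U_{\infty}) - \mathcal{L}_n\bigl(\widetilde{\Psi}_{\infty,1}(D_1)+\cdots+\widetilde{\Psi}_{\infty,n-1}(D_{n-1})\bigr),
\]
so that each new summand cancels the spurious higher-order data produced by the previous ones, and then $\Psi_{\infty}(V_{\infty}):=\sum_n\widetilde{\Psi}_{\infty,n}(D_n)$; additivity of asymptotic data (Lemma~\ref{add data}) then gives the correct data $V_{\infty}$, and the estimate $|\Psi_{\infty}(e^{-AT_{\text{ode}}}U_{\infty})|\le C|U_{\infty}|$ follows from the bounds on the $\mathcal{L}_n$ and $\widetilde{\Psi}_{\infty,n}$ applied to the $D_n$ (a step your version also omits, since without the recursion you never need to control correction terms). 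Your norm bound and the injectivity-plus-finite-dimension argument are fine in themselves, but they only become available once the construction actually produces the prescribed asymptotic data.
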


\begin{proof}
Let $\mathcal{L}_n: \mathbb{C}^k \to E_{A,\beta_{\text{rem}}}^n$ be the map such that
\[
\mathcal{L}_n(\eta) = \pi_n(u_{\infty,n}),
\]
where $u_{\infty,n} \in E_{A,\beta_{\text{rem}}}^1 \oplus \cdots \oplus E_{A,\beta_{\text{rem}}}^n$ is the one appearing in the statement of Proposition \ref{ode asymptotic lemma} and $v$ is the solution with $v(T_{\text{ode}}) = \eta$ (note that these are not the same maps $L_j$ that were defined in the proof of Lemma \ref{specify ode data}). By Lemma \ref{specify ode data}, $\mathcal{L}_n$ is surjective for $n = 1, \ldots, \mathcal{N}$. From \eqref{u estimate}, we obtain
\[
|\mathcal{L}_n(\eta)| \leq C |v(T_{\text{ode}})| = C |\eta|
\]
for $n = 1, \ldots, \mathcal{N}$ and $C$ only depends on $C_{\text{rem}}$, $\beta_{\text{rem}}$ and $A$.

We are now ready to construct the isomorphism. For $\xi \in E_{A,\beta_{\text{rem}}}^n$, let 
\[
\widetilde{\Psi}_{\infty,n}(\xi) := \Psi_{\infty,n} (e^{-AT_{\text{ode}}} \xi),
\]
where $\Psi_{\infty,n}$ are the maps from Lemma \ref{specify ode data}. They satisfy the estimates
\[
|\widetilde{\Psi}_{\infty,n}(\xi)| \leq C |\xi|,
\]
where $C$ only depends on $C_{\text{rem}}$, $\beta_{\text{rem}}$ and $A$. The reason for working with these new maps instead of the $\Psi_{\infty,n}$, is that the new ones will allow us to obtain the desired estimate for the isomorphism. Say we want to specify $V_{\infty} \in \mathbb{C}^k$. Let $U_{\infty} = e^{A T_{\text{ode}}} V_{\infty}$. The idea is the following: the initial data $\widetilde{\Psi}_{\infty,1}(\pi_1(U_{\infty}))$ give rise to a solution $v_1$ with the correct first order asymptotic data $\pi_1(V_{\infty})$, but wrong second order asymptotic data, say $x = e^{-AT_{\text{ode}}}\mathcal{L}_2(\widetilde{\Psi}_{\infty,1}(\pi_1(U_{\infty})))$; then the initial data
\[
\widetilde{\Psi}_{\infty,2}(\pi_2(U_{\infty}) - \mathcal{L}_2(\widetilde{\Psi}_{\infty,1}(\pi_1(U_{\infty}))))
\]
give rise to a solution $v_2$ with zero first order asymptotic data, and second order asymptotic data $\pi_2(V_{\infty}) - x$; therefore, since asymptotic data can be added, the solution $v_1 + v_2$ has the correct first and second order asymptotic data; finally, we can iterate this process until we have the correct asymptotic data of all orders. Formally, define the vectors $D_n$ recursively by the formula
\[
D_n := \pi_n(U_{\infty}) - \mathcal{L}_n(\widetilde{\Psi}_{\infty,1}(D_1) + \cdots + \widetilde{\Psi}_{\infty,n-1}(D_{n-1}))
\]
for $n \geq 2$ and set $D_1 := \pi_1(U_{\infty})$. Using the estimates above, we see that
\[
|D_n| \leq |\pi_n(U_{\infty})| + C(|D_1| + \cdots + |D_{n-1}|)
\]
where $C$ depends only on $C_{\text{rem}}$, $\beta_{\text{rem}}$ and $A$. We can define the map
\[
    \widetilde{\Psi}_{\infty}(U_{\infty}) := \widetilde{\Psi}_{\infty,1}(D_1) + \widetilde{\Psi}_{\infty,2}(D_2) + \cdots + \widetilde{\Psi}_{\infty,\,\mathcal{N}}(D_{\mathcal{N}}).
\]
$\widetilde{\Psi}_{\infty}$ is linear since it is a sum of compositions of linear maps. Since we can add asymptotic data, the solution $v$ with initial data $v(T_{\text{ode}}) = \widetilde{\Psi}_{\infty}(U_{\infty})$ has asymptotic data $e^{-A T_{\text{ode}}} U_{\infty} = V_{\infty}$. Furthermore, $\widetilde{\Psi}_{\infty}$ is injective since the zero solution can only give rise to zero asymptotic data of all orders. Thus $\widetilde{\Psi}_{\infty}$ is a linear isomorphism. We estimate using the estimates for $\widetilde{\Psi}_{\infty,n}$ and the $D_n$,
\[
|\widetilde{\Psi}_{\infty}(U_{\infty})| \leq C(|\pi_1(U_{\infty})| + \cdots + |\pi_{\mathcal{N}}(U_{\infty})|) \leq C|U_{\infty}|
\]
where $C$ only depends on $C_{\text{rem}}$, $\beta_{\text{rem}}$ and $A$. Finally, we want our isomorphism to be defined on $V_{\infty}$. Let $\Psi_{\infty}(V_{\infty}) := \widetilde{\Psi}_{\infty}(e^{A T_{\text{ode}}} V_{\infty})$ for $V_{\infty} \in \mathbb{C}^k$. Then $\Psi_{\infty}:\mathbb{C}^k \to \mathbb{C}^k$ is a linear isomorphism and
\[
|\Psi_{\infty}(e^{-A T_{\text{ode}}} U_{\infty})| = |\widetilde{\Psi}_{\infty}(U_{\infty})| \leq C |U_{\infty}|. \qedhere
\]
\end{proof}

\begin{remark} \label{inhomogeneous}
    Proposition \ref{ode iso} can be used to obtain conclusions about the inhomogeneous equation \eqref{ode} when $\|F\|_A < \infty$. Let $v_0$ be the solution to \eqref{ode} with $v_0(T_{\text{ode}}) = 0$ and let $V_{0,\infty}$ be the corresponding asymptotic data. If we want to specify $V_{\infty} \in \mathbb{C}^k$, let $v_1$ be the solution to \eqref{ode} with $F = 0$ with initial data $v_1(T_{\text{ode}}) = \Psi_{\infty}(V_{\infty} - V_{0,\infty})$. Then $v := v_0 + v_1$ is a solution to \eqref{ode} and, by an argument similar to the proof of Lemma~\ref{add data}, it has asymptotic data $V_{\infty}$. To obtain estimates, let $U_{\infty} = e^{AT_{\text{ode}}} V_{\infty}$ and $U_{0,\infty} = e^{AT_{\text{ode}}} V_{0,\infty}$. Then
    \[
    \begin{split}
        |v(T_{\text{ode}})| &\leq |v_0(T_{\text{ode}})| + |v_1(T_{\text{ode}})|\\
        &= |\Psi_{\infty} (e^{-AT_{\text{ode}}}(U_{\infty} - U_{0,\infty}))|\\
        &\leq C|U_{\infty} - U_{0,\infty}|\\
        &\leq C(|U_{\infty}| + e^{\kappa_1 T_{\text{ode}}} \|F\|_A),
    \end{split}
    \]
    where $C$ only depends on $C_{\text{rem}}$, $\beta_{\text{rem}}$ and $A$.
\end{remark}

\section{Asymptotic analysis of weakly silent, balanced and convergent equations} \label{pde analysis}

We go back to \eqref{pde}, the equation of interest. Assume \eqref{pde} is weakly silent, balanced and convergent. Recall that these conditions on the equation ensure that the coefficients of the spatial derivative terms decay exponentially, and the coefficients of the time derivative terms converge exponentially as $t \to \infty$. Moreover, balance prevents solutions from growing super exponentially. Hence we expect to be able to approximate solutions of \eqref{pde} with solutions of 
\begin{equation} \label{model equation}
    \partial_t
    \begin{pmatrix}
        u\\
        u_t
    \end{pmatrix} = A_{\infty}
    \begin{pmatrix}
        u\\
        u_t
    \end{pmatrix} +
    \begin{pmatrix}
        0\\
        f
    \end{pmatrix},
\end{equation}
where $A_{\infty}$ is defined in connection to \eqref{ode mode equation}.

To perform the analysis, we apply the results obtained in Section \ref{ode analysis} to the ODE \eqref{mode equation}, satisfied by the Fourier modes of a solution $u$ to \eqref{pde}. Then we put together the mode by mode results to obtain information about $u$. The reason why it is possible to piece together the information on the modes into a statement for $u$ is that, in the results of Section \ref{ode analysis}, the constants $C$ appearing in the estimates are independent of $\iota$. 

In what follows, we obtain the same kind of results that were obtained in the ODE case. That is, we deduce asymptotic estimates of all orders for solutions of \eqref{pde}, and we prove that these solutions are completely characterized by the asymptotic data in the estimates. Then we finish this section by applying the results to a simple example.

\subsection{Asymptotic estimates}

When decomposing the equation in Fourier modes, the error matrix $A_{\text{rem}}$ arises,
\[
A_{\text{rem}} = 
\begin{pmatrix}
    0 & \, 0\\
    - \mathfrak{g}^2 I_m - in_lX^l + \zeta_{\infty} - \zeta & \, 2in_lg^{0l} I_m + \alpha_{\infty} - \alpha
\end{pmatrix}.
\]
Note that this matrix is related to the differential operator matrix
\[
D = 
\begin{pmatrix}
    0 & \, 0\\
    g^{jl}\partial_j \partial_l + \sum_r a_r^{-2} \Delta_{g_r} - X^j\partial_j + \zeta_{\infty} - \zeta & \, 2g^{0l} \partial_l + \alpha_{\infty} - \alpha
\end{pmatrix}
\]
through the Fourier decomposition. Before stating the result, we need to define the energies of interest. Let $\iota \in \mathcal{J}_B$ and $z(\iota)$ be a solution to \eqref{mode equation}. Then
\[
    \mathcal{E}_s(t,\iota) := \frac{1}{2} \langle \nu(\iota) \rangle^{2s} ( |\dot{z}(t,\iota)|^2 + \mathfrak{g}^2(t,\iota) |z(t,\iota)|^2 + |z(t,\iota)|^2 )
\]
for $s \in \mathbb{R}$. Additionally, if $u$ is a solution to \eqref{pde} and $z(\iota)$ are its Fourier modes, define
\[
\mathfrak{E}_s[u](t) :=  \sum_{\iota \in \mathcal{J}_B} \mathcal{E}_s (t,\iota) 
\]
for $s \in \mathbb{R}$. Recall that for every positive integer $n \leq \mathcal{N}$ we use the notation $E^n = E_{A_{\infty,\beta_{\text{rem}}}}^n$, where $\mathcal{N}$ is defined with respect to $\beta_{\text{rem}} = \min\{\mathfrak{b}_{\text{s}}, \eta_{\text{mn}}\}$ and $A_{\infty}$ as in Definition~\ref{generalized eigenspaces}, and $\mathfrak{b}_{\text{s}}$ and $\eta_{\text{mn}}$ come from the definitions of weak silence and weak convergence; cf. Definitions~\ref{silent} and \ref{weak convergence}.

\begin{theorem} \label{pde asymptotic lemma}
    Let \eqref{pde} be weakly silent, balanced and convergent. Furthermore, assume that $f$ is a smooth function such that for every $s \in \mathbb{R}$, 
    \begin{equation} \label{f condition}
        \|f\|_{A, s} := \int_0^{\infty} e^{-\kappa_1 \tau} \|f(\tau)\|_{(s)} d\tau < \infty
    \end{equation}
    where $\kappa_1 = \kappa_{\text{\emph{max}}}(A_{\infty})$. Then for every integer $n \geq 1$, there are non-negative constants $C$, $N$, $s_{\emph{hom}, n}$ and $s_{\emph{ih}, n}$, depending only on the coefficients of the equation, such that the following holds. If $u$ is a smooth solution to \eqref{pde}, there is a unique $V_{\infty,n} \in C^{\infty}(\bar{M},E^1 \oplus \cdots \oplus E^n)$ (if $n > \mathcal{N}$, then $V_{\infty,n} \in C^{\infty}(\bar{M},\mathbb{C}^{2m})$) such that
    \begin{equation} \label{pde estimate}
    \begin{split}
        \bigg\| 
        &\begin{pmatrix}
            u(t)\\
            u_t(t)
        \end{pmatrix} - F_{\infty, n}(t) \bigg\|_{(s)}\\
        &\leq C \langle t \rangle^N e^{(\kappa_1 - n\beta_{\emph{rem}})t} (\|u_t(0)\|_{(s + s_{\emph{hom}, n})} + \|u(0)\|_{(s + s_{\emph{hom}, n} + 1)} + \|f\|_{A, s + s_{\emph{ih}, n}})
    \end{split}
    \end{equation}
    for $t \geq 0$, where $F_{\infty,n}$ is given by the recursive formula
    \[
    F_{\infty,n}(t) = e^{A_{\infty}t} V_{\infty,n} + \int_0^t e^{A_{\infty} (t-\tau)} D F_{\infty,n-1}(\tau) d\tau + \int_0^t e^{A_{\infty}(t-\tau)}
        \begin{pmatrix}
            0\\
            f(\tau)
        \end{pmatrix}d\tau  
    \]
    and $F_{\infty,0} = 0$. Furthermore, 
    \begin{equation}
        \|V_{\infty,n}\|_{(s)} \leq C(\|u_t(0)\|_{(s + s_{\emph{hom}, n})} + \|u(0)\|_{(s + s_{\emph{hom}, n} + 1)} + \|f\|_{A, s + s_{\emph{ih}, n}}).
    \end{equation}
\end{theorem}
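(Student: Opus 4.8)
The plan is to reduce the statement to the ODE results of Section~\ref{ode analysis} applied mode by mode, and then reassemble the mode-wise estimates into a Sobolev estimate for $u$. First I would fix a smooth solution $u$ of \eqref{pde} and, for each $\iota \in \mathcal{J}_B$, consider its Fourier mode $z(\iota)$, which solves \eqref{mode equation}, equivalently the first-order system \eqref{ode mode equation} with $v = (z,\dot z)$, $A = A_\infty$, and remainder $A_{\text{rem}}(t,\iota)$. The key input is that, by weak silence, balance and convergence, $A_{\text{rem}}$ satisfies both decay estimates \eqref{arem estimate 1} and \eqref{arem estimate 2}; the first puts us in the setting of \eqref{arem estimate} with $T_{\text{ode}} = T_{\text{ode}}(\iota)$ and a $\iota$-independent constant $C_{\text{rem}}$, while the second trades the shifted time $\bar t$ for a factor $\langle\nu(\iota)\rangle^2$ — this is exactly the role of the parameter $\gamma$ in Lemma~\ref{asymptotic lemma}, and it will be needed to absorb the worst-case modes with $T_{\text{ode}}(\iota)$ large. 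I would also record that the hypothesis \eqref{f condition} on $f$ translates, mode by mode, into $\|F(\cdot,\iota)\|_{A_\infty} < \infty$ with $\sum_\iota \langle\nu(\iota)\rangle^{2s}\|F(\cdot,\iota)\|_{A_\infty}^2$ controlled by $\|f\|_{A,s}^2$, so that \eqref{inhomogeneous assumption} holds for each mode.

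Next I would apply Proposition~\ref{ode asymptotic lemma} to each mode $v(\iota)$, obtaining for every $n$ a unique $v_{\infty,n}(\iota) \in E^1_{A_\infty,\beta_{\text{rem}}} \oplus \cdots \oplus E^n_{A_\infty,\beta_{\text{rem}}}$ and the estimate \eqref{estimate}, with constants $C,N$ independent of $\iota$. Setting $\hat V_{\infty,n}(\iota) := e^{-A_\infty T_{\text{ode}}(\iota)} u_{\infty,n}(\iota)$ defines a candidate $V_{\infty,n}$ via $\widehat{V_{\infty,n}}(\iota) = v_{\infty,n}(\iota)$; the bound \eqref{u estimate} on $u_{\infty,n}$ gives $|v_{\infty,n}(\iota)|$ or rather $|u_{\infty,n}(\iota)|$ a bound in terms of $|v(T_{\text{ode}}(\iota),\iota)| + e^{\kappa_1 T_{\text{ode}}(\iota)}\|F(\cdot,\iota)\|_{A_\infty}$. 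The crucial point — and the one that determines the loss indices $s_{\text{hom},n}$, $s_{\text{ih},n}$ — is to estimate $|v(T_{\text{ode}}(\iota),\iota)|$ and the factor $e^{\kappa_1 T_{\text{ode}}(\iota)}$ in terms of the initial data $(u(0),u_t(0))$ at $t=0$ and powers of $\langle\nu(\iota)\rangle$. Here one integrates the mode equation from $0$ to $T_{\text{ode}}(\iota)$: using $T_{\text{ode}}(\iota) \lesssim \langle\nu(\iota)\rangle$-type bounds (from the definition of $T_{\text{ode}}$ together with $\mathfrak g(t,\iota) \le e^{c_{\mathfrak e}}e^{-\mathfrak b_{\text s} t}\mathfrak g(0,\iota) \le C\langle\nu(\iota)\rangle e^{-\mathfrak b_{\text s}t}$, so $T_{\text{ode}}(\iota) \le C\ln\langle\nu(\iota)\rangle$), a Grönwall argument on $[0,T_{\text{ode}}(\iota)]$ yields $|v(T_{\text{ode}}(\iota),\iota)| + e^{\kappa_1 T_{\text{ode}}(\iota)} \le C\langle\nu(\iota)\rangle^{p}$ for some fixed power $p$, times the mode energy at $t=0$. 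Squaring, multiplying by $\langle\nu(\iota)\rangle^{2s}$ and summing over $\iota$ then produces \eqref{pde estimate} and the bound on $\|V_{\infty,n}\|_{(s)}$, with $s_{\text{hom},n}$ and $s_{\text{ih},n}$ the accumulated powers; that $F_{\infty,n}$ as a function on $\bar M$ is the mode-wise $F_{\infty,n}(\iota)$ follows from the recursive formulas matching up, since $D$ acts on the mode $\iota$ exactly as $A_{\text{rem}}(\iota)$.

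Finally I would check the regularity and uniqueness claims. Smoothness of $V_{\infty,n}$ on $\bar M$ follows because the estimate on $\|V_{\infty,n}\|_{(s)}$ holds for every $s$, so $V_{\infty,n} \in \bigcap_s H^s(\bar M, \cdot) = C^\infty$; likewise $F_{\infty,n}(t) \in C^\infty$ for each $t$. Uniqueness of $V_{\infty,n}$ follows from uniqueness of each $v_{\infty,n}(\iota)$ in Proposition~\ref{ode asymptotic lemma}: if two choices both satisfied \eqref{pde estimate}, then mode by mode they give two vectors satisfying \eqref{estimate}, forcing equality. The main obstacle I anticipate is the bookkeeping in the previous paragraph: one must bound $T_{\text{ode}}(\iota)$, propagate the mode energy from $t=0$ to $t=T_{\text{ode}}(\iota)$ with explicit control of how powers of $\langle\nu(\iota)\rangle$ enter (coming from $\mathfrak g^2 \lesssim \langle\nu(\iota)\rangle^2$ in $A_{\text{rem}}$ on $[0,T_{\text{ode}}(\iota)]$ and from the $\gamma = \langle\nu(\iota)\rangle^2$ in Lemma~\ref{asymptotic lemma}), and then verify that after summing these powers are finite — i.e. that the loss $s_{\text{hom},n}$, $s_{\text{ih},n}$ is finite for each $n$. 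The recursion in $n$ compounds the loss, so one also has to track that each application of $D$ (two spatial derivatives) adds a bounded amount, which is where the ``$+1$'' on $\|u(0)\|$ and the growth of $s_{\text{hom},n}$ with $n$ come from.
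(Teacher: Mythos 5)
Your overall strategy (Fourier decomposition, mode-wise ODE analysis with $\iota$-independent constants, then resummation in $\langle\nu(\iota)\rangle$-weighted $\ell^2$) is the right one, but you have chosen the perspective \eqref{arem estimate 1}, with the $\iota$-dependent starting time $T_{\text{ode}}(\iota)$, and this creates a genuine gap that you dismiss in one clause. Proposition~\ref{ode asymptotic lemma}, applied at the mode level, produces approximants whose recursion integrates from $T_{\text{ode}}(\iota)$, whereas the $F_{\infty,n}$ in the statement of Theorem~\ref{pde asymptotic lemma} integrates from $0$ for every mode; so it is not true that ``the recursive formulas match up''. The discrepancy at each step is a term of the form $e^{A_\infty t}w_n(\iota)$ with $w_n(\iota)=\int_0^{T_{\text{ode}}(\iota)}e^{-A_\infty s}\bigl(A_{\text{rem}}F_{\infty,n-1}+F\bigr)(s)\,ds$, and $w_n(\iota)$ need not lie in $E^1\oplus\cdots\oplus E^n$; one must split it, absorb the low part into $V_{\infty,n}$ and the high part into the error, bound $|w_n(\iota)|$ by powers of $\langle\nu(\iota)\rangle$ using $T_{\text{ode}}(\iota)\lesssim\ln\langle\nu(\iota)\rangle$, and then propagate this correction through the nested recursion (since $F_{\infty,n-1}$ itself has been modified), as well as extend the estimate from $t\geq T_{\text{ode}}(\iota)$ down to $t\geq 0$ at the cost of further $\langle\nu(\iota)\rangle$ powers. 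None of this is fatal, but it is a real piece of the proof, not a formality. A second, smaller issue: on $[0,T_{\text{ode}}(\iota)]$ a na\"ive Gr\"onwall estimate on the first-order system, with $\|A_{\text{rem}}\|\lesssim\langle\nu(\iota)\rangle^2$, gives a factor $e^{C\langle\nu(\iota)\rangle^2}$, which cannot be summed; the polynomial loss you assert requires the energy $\mathcal{E}_s$ (with the $\mathfrak{g}^2|z|^2$ weight) and the weak balance hypothesis, i.e.\ the argument behind \cite[Lemma 10.17]{hans}. You gesture at ``the mode energy at $t=0$'', but the proposal should make explicit that this, and not Gr\"onwall on $|v|$, is what yields $\langle\nu(\iota)\rangle^{p}$.

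For comparison, the paper's proof avoids both difficulties: it takes the $n=1$ case, both at the PDE level and at the mode level (the estimates \eqref{mode estimate} for $n=1$), directly from \cite[Lemma 10.17, (10.42)]{hans}, where the passage from data at $T_{\text{ode}}(\iota)$ to data at $t=0$ has already been carried out once and for all; the induction is then performed with Lemma~\ref{asymptotic lemma} applied with $T_{\text{ode}}=0$ and $\gamma=\langle\nu(\iota)\rangle^2$, i.e.\ using \eqref{arem estimate 2} rather than \eqref{arem estimate 1}. This is precisely why the parameter $\gamma$ was built into Lemma~\ref{asymptotic lemma}: all $\iota$-dependence enters as a factor $\langle\nu(\iota)\rangle^{2}$ per step, so the integration limits are $0$ throughout, the recursion for $F_{\infty,n}$ matches the statement verbatim, and the Sobolev losses simply accumulate as $s_{\text{hom},n+1}=s_{\text{hom},n}+2$, $s_{\text{ih},n+1}=s_{\text{ih},n}+2$. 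If you want to keep your route, you must add the reconciliation argument described above; otherwise, switching the inductive step to the $\gamma$-version of Lemma~\ref{asymptotic lemma} with $T_{\text{ode}}=0$ removes the gap.
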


\begin{remark} \label{dependence of the constants}
    We have $s_{\text{hom},n} = s_{\text{hom},1} + 2(n-1)$ and $s_{\text{ih},n} = s_{\text{ih},1} + 2(n-1)$. Moreover, $s_{\text{hom},1} = s_{\text{hom}}$ and $s_{\text{ih},1} = s_{\text{ih}}$, where $s_{\text{hom}}$ and $s_{\text{ih}}$ are the ones appearing in \cite[Lemma~10.17]{hans}. Furthermore, similarly as in Proposition~\ref{ode asymptotic lemma}, the non-negative integer $N$ depends linearly on $n$.
\end{remark}

\begin{proof}
    By applying \cite[Lemma 10.17]{hans}, we obtain the following leading order estimate for a solution $u$ of \eqref{pde},
    \begin{equation}
    \begin{split}
        \bigg\| 
        \begin{pmatrix}
            u(t)\\
            u_t(t)
        \end{pmatrix} - e&^{A_{\infty}t} V_{\infty,1} - \int_0^t e^{A_{\infty}(t-\tau)} 
        \begin{pmatrix}
            0\\
            f(\tau)
        \end{pmatrix}d\tau
        \bigg\|_{(s)}\\
        &\leq C \langle t \rangle^N e^{(\kappa_1 - \beta_{\text{rem}})t} (\|u_t(0)\|_{(s + s_{\text{hom},1})} + \|u(0)\|_{(s + s_{\text{hom},1} + 1)} + \|f\|_{A, s + s_{\text{ih},1}}),
    \end{split}
    \end{equation}
    for $t \geq 0$ and $s \in \mathbb{R}$. Here $C, N$, $s_{\text{hom},1}$ and $s_{\text{ih},1}$ depend only on the coefficients of the equation. Furthermore, $V_{\infty,1} \in C^{\infty} (\bar{M},E^1)$ is unique and satisfies the estimate 
    \[
    \|V_{\infty,1}\|_{(s)} \leq C(\|u_t(0)\|_{(s + s_{\text{hom},1})} + \|u(0)\|_{(s + s_{\text{hom},1} + 1)} + \|f\|_{A, s + s_{\text{ih},1}}).
    \]
    This proves the statement for $n = 1$.
    
    For the inductive step, suppose we have an estimate like \eqref{pde estimate}. Furthermore, suppose the modes of $u$, $z(\iota) = \langle u, \varphi_{\iota} \rangle_B$, satisfy the estimates
    \begin{equation} \label{mode estimate}
    \begin{split}
        \bigg| v(t) - e^{A_{\infty}t} v_{\infty,n} - \int_0^t e^{A_{\infty}(t-\tau)} A_{\text{rem}} &\widehat{F}_{\infty,n-1}(\tau)d\tau - \int_0^t e^{A_{\infty}(t-\tau)} F(\tau)d\tau \bigg|\\
        &\leq C \langle t \rangle^N e^{(\kappa_1 - n \beta_{\text{rem}})t} (\mathcal{E}_{s_{\text{hom}, n}}^{1/2}(0) + \langle \nu(\iota) \rangle^{s_{\text{ih}, n}} \|\hat{f}\|_A) 
    \end{split}
    \end{equation}
    for $t \geq 0$; where $v(\iota) = (z(\iota),\dot{z}(\iota))$; $v_{\infty,n}(\iota) = \langle V_{\infty,n}, \varphi_{\iota} \rangle_B$; $\widehat{F}_{\infty, n-1}(\iota) = \langle F_{\infty, n-1}, \varphi_{\iota} \rangle_B$; $F(\iota) = (0,\hat{f}(\iota))$; $C$ depends only on $c_{\mathfrak{e}}$, $C_{\text{coeff}}$, $C_{\text{mn}}$, $\mathfrak{b}_{\text{s}}$, $\eta_{\text{mn}}$, $\eta_{\text{ode}}$, $A_{\infty}$, $g^{ij}(0)$ and $a_r(0)$; and $N$ depends only on $k$. Recall that the energy $\mathcal{E}_s$ was defined at the beginning of this subsection. Note that for $n = 1$, the estimates \eqref{mode estimate} correspond to  \cite[(10.42), p. 150]{hans}. We can apply Lemma \ref{asymptotic lemma} to $v$ by using this estimate and
    \[
    \|A_{\text{rem}}(t)\| \leq C \langle \nu(\iota) \rangle^2 e^{-\beta_{\text{rem}}t}
    \]
    for $t \geq 0$ (see \eqref{arem estimate 2}). In this case $T_{\text{ode}} = 0$, $\gamma = \langle \nu(\iota) \rangle^2$, $\alpha(0) = \mathcal{E}_{s_{\text{hom}, n}}^{1/2}(0) + \langle \nu(\iota) \rangle^{s_{\text{ih}, n}} \|\hat{f}\|_A$ and $H = \widehat{F}_{\infty,n}$. We obtain
    \begin{equation} \label{new mode estimate}
        \begin{split}
        \bigg| v(t) - e^{A_{\infty}t} &v_{\infty,n+1} - \int_0^t e^{A_{\infty}(t-\tau)} A_{\text{rem}} \widehat{F}_{\infty,n}(\tau)d\tau - \int_0^t e^{A_{\infty}(t-\tau)} F(\tau)d\tau \bigg|\\
        &\leq C \langle t \rangle^N e^{(\kappa_1 - (n+1)\beta_{\text{rem}})t} (|v(0)| + \langle \nu(\iota) \rangle^2 (\mathcal{E}_{s_{\text{hom}, n}}^{1/2}(0) + \langle \nu(\iota) \rangle^{s_{\text{ih}, n}} \|\hat{f}\|_A))\\
        &\leq C \langle t \rangle^N e^{(\kappa_1 - (n+1)\beta_{\text{rem}})t} (\mathcal{E}_{s_{\text{hom}, n} + 2}^{1/2}(0) + \langle \nu(\iota) \rangle^{s_{\text{ih}, n} + 2} \|\hat{f}\|_A)
    \end{split}
    \end{equation}
    for $t \geq 0$, where $C$ has the same dependence as in \eqref{mode estimate}. Moreover, $v_{\infty,n+1} \in E^1 \oplus \cdots \oplus E^{n+1}$ (if $n \geq \mathcal{N}$, then $v_{\infty, n+1} \in \mathbb{C}^{2m}$) satisfies
    \[
    |v_{\infty,n+1}| \leq C(\mathcal{E}_{s_{\text{hom}, n} + 2}^{1/2}(0) + \langle \nu(\iota) \rangle^{s_{\text{ih}, n} + 2} \|\hat{f}\|_A).
    \]
    Here we have used that $|v(0)| \leq \sqrt{2} \mathcal{E}_s^{1/2}(0)$ for any $s \geq 0$. If we let $s_{\text{hom}, n+1} = s_{\text{hom}, n} + 2$ and $s_{\text{ih}, n+1} = s_{\text{ih}, n} + 2$, \eqref{new mode estimate} corresponds to \eqref{mode estimate} for $n+1$. From \eqref{new mode estimate} we have
    \begin{equation} \label{new mode estmate 2}
    \begin{split}
        \langle \nu(\iota) \rangle^s \bigg| v(t) - e^{A_{\infty}t} &v_{\infty,n+1} - \int_0^t e^{A_{\infty}(t-\tau)} A_{\text{rem}} \widehat{F}_{\infty,n}(\tau)d\tau - \int_0^t e^{A_{\infty}(t-\tau)} F(\tau)d\tau \bigg|\\
        &\leq C  \langle t \rangle^N e^{(\kappa_1 - (n+1)\beta_{\text{rem}})t} (\mathcal{E}_{s + s_{\text{hom}, n + 1}}^{1/2}(0) + \langle \nu(\iota) \rangle^{s + s_{\text{ih}, n + 1}} \|\hat{f}\|_A)
    \end{split}
    \end{equation}
    for $s \in \mathbb{R}$. Now let
    \[
    V_{\infty,n+1} := \sum_{\iota} v_{\infty,n+1}(\iota) \varphi_{\iota}.
    \]
    Then
    \[
    \begin{split}
        \|V_{\infty,n+1}\|_{(s)} &= \bigg( \sum_{\iota} \langle \nu(\iota) \rangle^{2s} |v_{\infty,n+1}(\iota)|^2 \bigg)^{1/2}\\
        &\leq C\bigg(\sum_{\iota} \big(\mathcal{E}_{s + s_{\text{hom}, n+1}}(\iota,0) + \langle \nu(\iota) \rangle^{2(s + s_{\text{ih}, n + 1})} \|\hat{f}\|_A^2 \big) \bigg)^{1/2}\\
        &\leq C (\mathfrak{E}_{s + s_{\text{hom}, n+1}}^{1/2}[u](0) + \|f\|_{A, s_{\text{ih}, n + 1}})\\
        &\leq C( \|u_t(0)\|_{(s + s_{\text{hom}, n+1})} + \|u(0)\|_{(s + s_{\text{hom}, n+1} + 1)} + \|f\|_{A, s_{\text{ih}, n+1}})
    \end{split}
    \]
    and the right hand side is finite because of \eqref{f condition} and the smoothness of $u$. Consequently ${V_{\infty,n+1} \in C^{\infty}(\bar{M},E^1 \oplus \cdots \oplus E^{n+1})}$ (if $n \geq \mathcal{N}$, then $V_{\infty,n+1} \in C^{\infty}(\bar{M},\mathbb{C}^{2m})$). Finally, we can use \eqref{new mode estmate 2} to obtain
    \[
    \begin{split}
        \bigg\| 
        &\begin{pmatrix}
            u(t)\\
            u_t(t)
        \end{pmatrix} - e^{A_{\infty}t} V_{\infty,n+1} - \int_0^t e^{A_{\infty}(t-\tau)} D F_{\infty,n}(\tau)d\tau - \int_0^t e^{A_{\infty}(t-\tau)}
        \begin{pmatrix}
            0\\
            f(\tau)
        \end{pmatrix} d\tau
        \bigg\|_{(s)}\\
        &\leq C \langle t \rangle^N e^{(\kappa_1 - (n+1)\beta_{\text{rem}})t} (\|u_t(0)\|_{(s + s_{\text{hom}, n+1})} + \|u(0)\|_{(s + s_{\text{hom}, n+1} + 1)} + \|f\|_{A, s_{\text{ih}, n + 1}}).
    \end{split}
    \]
    Uniqueness of the $V_{\infty,n}$ is obtained in a similar way as in the proof of Lemma~\ref{ode asymptotic lemma}.
\end{proof}

\subsection{Specifying asymptotic data}

As we did in the ODE setting, from now on, we consider Equation \eqref{pde} with $f = 0$. In analogy with the discussion that led to Definition \ref{asymptotic data}, we are led to the definition of asymptotic data for $u$.

\begin{definition}
    Let \eqref{pde} be weakly silent, balanced and convergent. Let $u$ be a solution to \eqref{pde} with $f = 0$. Then, for $1 \leq n \leq \mathcal{N}$, the function $\pi_n (V_{\infty,n}) \in C^{\infty}(\bar{M}, E^n)$ is called the \emph{$n$-th order asymptotic data} for $u$.
\end{definition}
 
Assume that we have a solution $u$ for which the asymptotic data of orders 1 to $n-1$ vanishes. Then the $n$-th estimate becomes
\[
\bigg\| 
\begin{pmatrix}
    u(t)\\
    u_t(t)
\end{pmatrix} - e^{A_{\infty}t} V_{\infty,n} \bigg\|_{(s)} \leq C \langle t \rangle^N e^{(\kappa_1 - n\beta_{\text{rem}})t} ( \|u_t(0)\|_{(s + s_{\text{hom}, n})} + \|u(0)\|_{(s + s_{\text{hom}, n} + 1)} ),
\]
where $V_{\infty,n} \in C^{\infty}(\bar{M},E^n)$. We prove that in this situation, $V_{\infty,n}$ can be specified, given that we make one additional assumption on \eqref{pde}. 

\begin{lemma} \label{specify data pde}
    Let \eqref{pde} be weakly silent, balanced and convergent. Suppose that $f = 0$ and that there is a constant $\mathfrak{b}_{\text{\emph{low}}} > 0$ and a non-negative continuous function $\mathfrak{e}_{\text{\emph{low}}} \in L^1([0,\infty))$ such that
    \begin{equation} \label{low silent}
        \dot{\ell}(t) \geq -\mathfrak{b}_{\text{\emph{low}}} - \mathfrak{e}_{\text{\emph{low}}}(t)
    \end{equation}
    for $t \geq 0$ and $\iota \neq 0$. Then there is a linear injective map
    \[
    \Phi_{\infty,n}: C^{\infty}(\bar{M},E^n) \to C^{\infty}(\bar{M},\mathbb{C}^{2m})
    \]
    for $1 \leq n \leq \mathcal{N}$ such that the following holds. For $s \in \mathbb{R}$ and $\chi \in C^{\infty}(\bar{M},E^n)$, 
    \begin{equation}
        \|\Phi_{\infty,n}(\chi)\|_{(s)} \leq C\|\chi\|_{(s + s_{\infty})},
    \end{equation}
    where $C$ and $s_{\infty}$ only depend on the coefficients of the equation, $\mathfrak{b}_{\text{\emph{low}}}$ and $\|\mathfrak{e}_{\text{\emph{low}}}\|_1$. Furthermore, if $\chi \in C^{\infty}(\bar{M},E^n)$ and $u$ is the solution of \eqref{pde} such that
    \[
    \begin{pmatrix}
        u(0)\\
        u_t(0)
    \end{pmatrix} = \Phi_{\infty,n}(\chi),
    \]
    then
    \[
    \bigg\| 
\begin{pmatrix}
    u(t)\\
    u_t(t)
\end{pmatrix} - e^{A_{\infty}t} \chi \bigg\|_{(s)} \leq C \langle t \rangle^N e^{(\kappa_1 - n\beta_{\text{\emph{rem}}})t} ( \|u_t(0)\|_{(s + s_{\text{\emph{hom}}, n})} + \|u(0)\|_{(s + s_{\text{\emph{hom}}, n} + 1)} )
    \]
    for $t \geq 0$ and $s \in \mathbb{R}$, where $C$, $N$ and $s_{\text{\emph{hom}}, n}$ are as in Theorem \ref{pde asymptotic lemma}.
\end{lemma}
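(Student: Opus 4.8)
The plan is to pass to Fourier modes, apply the ODE statement Lemma~\ref{specify ode data} mode by mode with constants independent of $\iota$, transport the resulting maps from time $T_{\text{ode}}(\iota)$ back to $t=0$, assemble them into a single operator $\Phi_{\infty,n}$, and then read off the asymptotic estimate from Theorem~\ref{pde asymptotic lemma}. For each $\iota \in \mathcal{J}_B$ the mode equation \eqref{mode equation} is of the form \eqref{ode mode equation} with $A=A_\infty$, $F=0$ and $A_{\text{rem}}=A_{\text{rem}}(\iota)$, which by \eqref{arem estimate 1} satisfies \eqref{arem estimate} with $\bar t = t-T_{\text{ode}}(\iota)$ and a constant $C_{\text{rem}}$ independent of $\iota$; since $A=A_\infty$ is the same for all modes, the spaces $E^n=E^n_{A_\infty,\beta_{\text{rem}}}$ and the integer $\mathcal{N}$ are $\iota$-independent as well. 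Hence for $1 \le n \le \mathcal{N}$, Lemma~\ref{specify ode data} furnishes an injective linear map $\Psi^\iota_{\infty,n}\colon E^n \to \mathbb{C}^{2m}$ such that the solution $v$ of \eqref{ode mode equation} with $v(T_{\text{ode}}(\iota))=\Psi^\iota_{\infty,n}(\xi)$ obeys \eqref{special estimate} with $v_{\infty,n}=\xi$, and such that $|\Psi^\iota_{\infty,n}(e^{-A_\infty T_{\text{ode}}(\iota)}u)| \le C|u|$ for $u \in E^n$, with $C$ and $N$ depending only on $C_{\text{rem}}$, $\beta_{\text{rem}}$ and $A_\infty$. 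Writing $\mathcal{S}_\iota(t_1,t_0)$ for the solution operator of \eqref{ode mode equation} with $F=0$, I set $\phi_\iota := \mathcal{S}_\iota(0,T_{\text{ode}}(\iota)) \circ \Psi^\iota_{\infty,n}\colon E^n \to \mathbb{C}^{2m}$, which sends $n$-th order asymptotic data to data at $t=0$.

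The key quantitative step is the bound $|\phi_\iota(\xi)| \le C\langle\nu(\iota)\rangle^{s_\infty}|\xi|$. It rests on a two-sided energy estimate for \eqref{mode equation}: differentiating $\mathcal{E}_s(t,\iota)$ along a solution and substituting the equation, the $\mathfrak{g}^2$ cross terms cancel and the only potentially large contribution is $2\mathfrak{g}\dot{\mathfrak{g}}|z|^2 = 2\dot\ell\,\mathfrak{g}^2|z|^2$; weak silence gives the upper bound on $\dot\ell$ and the new hypothesis \eqref{low silent} the lower bound, so $|\dot\ell| \le \max\{\mathfrak{b}_{\text{s}},\mathfrak{b}_{\text{low}}\}+\mathfrak{e}+\mathfrak{e}_{\text{low}}$, while weak balance controls $\sigma$, $X$, $\alpha$, $\zeta$. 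This yields $\tfrac{d}{dt}\mathcal{E}_s \le (C+2\mathfrak{e}+2\mathfrak{e}_{\text{low}})\mathcal{E}_s$ with $C$ independent of $\iota$, hence $\mathcal{E}_s(t_1,\iota) \le C e^{C|t_1-t_0|}\mathcal{E}_s(t_0,\iota)$ for all $t_0,t_1 \ge 0$. On the other hand, $\mathfrak{g}(T_{\text{ode}}(\iota),\iota)=e^{-c_{\mathfrak{e}}}$ together with the decay $\mathfrak{g}(t,\iota) \le e^{c_{\mathfrak{e}}}e^{-\mathfrak{b}_{\text{s}}t}\mathfrak{g}(0,\iota)$ and $\mathfrak{g}(0,\iota) \le C\langle\nu(\iota)\rangle$ gives $T_{\text{ode}}(\iota) \le C_1 + C_2\ln\langle\nu(\iota)\rangle$, so $e^{CT_{\text{ode}}(\iota)}$ is bounded by a power of $\langle\nu(\iota)\rangle$. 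Combining this with the elementary bound $\|e^{A_\infty T_{\text{ode}}(\iota)}\| \le C\langle T_{\text{ode}}(\iota)\rangle^{2m}e^{\kappa_1 T_{\text{ode}}(\iota)}$ and with $\mathfrak{g}(T_{\text{ode}}(\iota),\iota) \le 1$ (so $\mathcal{E}_s$ and $|v|^2$ are comparable at $t=T_{\text{ode}}(\iota)$) produces the desired estimate, with $C$ and $s_\infty$ depending only on the coefficients, $\mathfrak{b}_{\text{low}}$ and $\|\mathfrak{e}_{\text{low}}\|_1$.

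Now define $\Phi_{\infty,n}(\chi) := \sum_{\iota} \phi_\iota(\hat\chi(\iota))\varphi_\iota$ for $\chi \in C^\infty(\bar M,E^n)$, where $\hat\chi(\iota)=\langle\chi,\varphi_\iota\rangle_B \in E^n$. Linearity is clear; the identity $\|\Phi_{\infty,n}(\chi)\|_{(s)}^2 = \sum_\iota\langle\nu(\iota)\rangle^{2s}|\phi_\iota(\hat\chi(\iota))|^2 \le C\sum_\iota\langle\nu(\iota)\rangle^{2(s+s_\infty)}|\hat\chi(\iota)|^2 = C\|\chi\|_{(s+s_\infty)}^2$ shows both that $\Phi_{\infty,n}(\chi) \in C^\infty(\bar M,\mathbb{C}^{2m})$ and the stated norm estimate, and injectivity follows since each $\phi_\iota=\mathcal{S}_\iota(0,T_{\text{ode}}(\iota))\circ\Psi^\iota_{\infty,n}$ is injective ($\mathcal{S}_\iota$ being an ODE flow). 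Finally, if $u$ solves \eqref{pde} with $(u(0),u_t(0))=\Phi_{\infty,n}(\chi)$, its $\iota$-th mode $v(\cdot,\iota)$ satisfies $v(T_{\text{ode}}(\iota),\iota)=\Psi^\iota_{\infty,n}(\hat\chi(\iota))$, so \eqref{special estimate} holds for it with $v_{\infty,n}=\hat\chi(\iota) \in E^n$. By the remark following Lemma~\ref{specify ode data} and the uniqueness in Proposition~\ref{ode asymptotic lemma}, applied iteratively in $j$ (just as in the ODE case, using that $E^1\oplus\cdots\oplus E^j$ contains no eigenvalue with real part $\kappa_1-j\beta_{\text{rem}}$), the data $v_{\infty,j}(\iota)$ of Proposition~\ref{ode asymptotic lemma} vanish for $j<n$ and equal $\hat\chi(\iota)$ for $j=n$; summing over $\iota$, the functions of Theorem~\ref{pde asymptotic lemma} satisfy $V_{\infty,j}=0$ for $j<n$ and $V_{\infty,n}=\chi$, so that (using $f=0$) $F_{\infty,n-1}=0$ and $F_{\infty,n}(t)=e^{A_\infty t}\chi$. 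Then \eqref{pde estimate} with $f=0$ is exactly the asserted estimate, with the same $C$, $N$ and $s_{\text{hom},n}$.

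The main obstacle is the bound $|\phi_\iota(\xi)| \le C\langle\nu(\iota)\rangle^{s_\infty}|\xi|$: integrating the mode equation backward over $[0,T_{\text{ode}}(\iota)]$ by a crude Grönwall argument would cost a factor $\exp(C\langle\nu(\iota)\rangle^2 T_{\text{ode}}(\iota))$, which is far too large to sum against Sobolev weights. One must exploit the precise structure of \eqref{mode equation}, so that the large coefficient $\mathfrak{g}^2$ enters the energy identity only through $\dot\ell=\dot{\mathfrak{g}}/\mathfrak{g}$, and it is precisely here that \eqref{low silent} — which plays no role in the preceding sections — is indispensable. Pinning down $s_\infty$ by tracking the powers of $\langle\nu(\iota)\rangle$ coming from $e^{CT_{\text{ode}}(\iota)}$, from $\|e^{A_\infty T_{\text{ode}}(\iota)}\|$, and from the comparison of $|v|$ with $\mathcal{E}_s^{1/2}$ at $t=0$ and $t=T_{\text{ode}}(\iota)$ is routine bookkeeping.
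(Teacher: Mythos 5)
Your proposal is correct and follows essentially the same route as the paper: apply Lemma~\ref{specify ode data} mode by mode with $\iota$-independent constants, transport data from $T_{\text{ode}}(\iota)$ back to $t=0$ using the backward energy estimate (whose cost is a power of $\langle\nu(\iota)\rangle$ thanks to \eqref{low silent} and $T_{\text{ode}}(\iota)\lesssim\ln\langle\nu(\iota)\rangle$), assemble $\Phi_{\infty,n}$ by Fourier summation, and identify the asymptotic data by the uniqueness/eigenvalue-growth argument. The only cosmetic difference is that you derive the two-sided energy inequality directly, whereas the paper cites the corresponding estimate from Ringström (\cite[(10.50)]{hans}).
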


\begin{proof}
    Let $\chi \in C^{\infty}(\bar{M},E^n)$ and $\hat{\chi}(\iota) = \langle \chi, \varphi_{\iota} \rangle_B \in E^n$. Define $v(\iota) = (z(\iota), \dot{z}(\iota))$ as the solution to \eqref{mode equation} such that $v(T_{\text{ode}},\iota) = \Psi_{\infty,n}(\iota)(\hat{\chi}(\iota))$, where $\Psi_{\infty,n}(\iota)$ is the map from Lemma~\ref{specify ode data} associated to the equation of the $\iota$-th mode, and $T_{\text{ode}}$ is defined in Definition~\ref{silent}. Then
    \begin{equation} \label{chi mode estimate}
        |v(t) - e^{A_{\infty}t} \hat{\chi}| \leq C \langle \bar{t} \rangle^N e^{(\kappa_1 - n\beta_{\text{rem}}) \bar{t}} |v(T_{\text{ode}})|
    \end{equation}
    for $t \geq T_{\text{ode}}$, where $C$ only depends on $C_{\text{coeff}}$, $C_{\text{mn}}$, $\mathfrak{b}_{\text{s}}$, $\eta_{\text{mn}}$ and $A_{\infty}$. Now let $u_{\infty} = e^{A_{\infty} T_{\text{ode}}} \hat{\chi}$, then
    \[
    \begin{split}
        |v(T_{\text{ode}})| &= |\Psi_{\infty,n}(e^{-A_{\infty} T_{\text{ode}}} u_{\infty})|\\
        &\leq C|u_{\infty}|\\
        &= C|e^{A_{\infty} T_{\text{ode}}} \hat{\chi}(\iota)|\\
        &\leq C e^{(\kappa_1 + 1) T_{\text{ode}}} |\hat{\chi}(\iota)|,
    \end{split}
    \]
    where $C$ has the same dependence as above. Thus the assumption \eqref{low silent} implies
    \begin{equation} \label{energy}
        \mathcal{E}_s^{1/2}(0) \leq C \langle \nu(\iota) \rangle^{s + s_{\infty}} |\hat{\chi}(\iota)|,
    \end{equation}
    where $C$ and $s_{\infty}$ have dependence as in the statement of this lemma. For details on how to deduce this, see \cite[(10.50), p. 153]{hans} and the comments that follow. Now define
    \[
    \Phi_{\infty,n}(\chi) := \sum_{\iota} v(0,\iota) \varphi_{\iota}.
    \]
    Then by using \eqref{energy},
    \[
    \begin{split}
        \| \Phi_{\infty,n}(\chi) \|_{(s)} &= \bigg( \sum_{\iota} \langle \nu(\iota) \rangle^{2s} |v(0,\iota)|^2 \bigg)^{1/2}\\
        &\leq \sqrt{2} \bigg( \sum_{\iota} \mathcal{E}_s(0) \bigg)^{1/2}\\
        &\leq C \bigg( \sum_{\iota} \langle \nu(\iota) \rangle^{2(s + s_{\infty})} |\hat{\chi}(\iota)|^2 \bigg)^{1/2}\\
        &= C\|\chi\|_{(s + s_{\infty})}.
    \end{split}
    \]
    This estimate implies $\Phi_{\infty,n}(\chi) \in C^{\infty}(\bar{M},\mathbb{C}^{2m})$ and the injectivity comes from the injectivity of the maps $\Psi_{\infty,n}(\iota)$. Now we verify that $\Phi_{\infty,n}$ does what it is supposed to. Let $u$ be the solution of \eqref{pde} such that
    \[
    \begin{pmatrix}
        u(0)\\
        u_t(0)
    \end{pmatrix} = \Phi_{\infty,n}(\chi).
    \]
    By construction of $\Phi_{\infty,n}$, the $v(\iota)$ satisfy the estimates \eqref{chi mode estimate}, implying that they have vanishing asymptotic data of orders 1 to $n-1$. We verify what this says about $u$. The first estimate from Theorem~\ref{pde asymptotic lemma} for $u$ is
    \[
    \bigg\| 
    \begin{pmatrix}
        u(t)\\
        u_t(t)
    \end{pmatrix} - e^{A_{\infty}t} V_{\infty,1} \bigg\|_{(s)} \leq C \langle t \rangle^N e^{(\kappa_1 - \beta_{\text{rem}})t},
    \]
    where $C$ can depend on the solution. For the Fourier coefficients, this implies
    \[
    |v(t) - e^{A_{\infty}t} v_{\infty,1}| \leq C \langle t \rangle^N e^{(\kappa_1 - \beta_{\text{rem}})t}
    \]
    where $v_{\infty,1}(\iota) = \langle V_{\infty,1}, \varphi_{\iota} \rangle_B$. But we already know that $|v(t)| \leq C \langle t \rangle^N e^{(\kappa_1 - \beta_{\text{rem}})t}$ holds. Hence
    \[
    |e^{A_{\infty}t}v_{\infty,1}| \leq C \langle t \rangle^N e^{(\kappa_1 - \beta_{\text{rem}})t};
    \]
    but since $v_{\infty,1} \in E^1$, this can only be true if $v_{\infty,1} = 0$. Since this is true for all $\iota$, we conclude $V_{\infty,1} = 0$ and the second estimate for $u$ becomes
    \[
    \bigg\| 
    \begin{pmatrix}
        u(t)\\
        u_t(t)
    \end{pmatrix} - e^{A_{\infty}t} V_{\infty,2} \bigg\|_{(s)} \leq C \langle t \rangle^N e^{(\kappa_1 - 2\beta_{\text{rem}})t},
    \]
    for some $V_{\infty,2} \in C^{\infty}(\bar{M},E^2)$. Here we can apply the same argument to show that $V_{\infty,2} = 0$. By continuing this process, we eventually get $V_{\infty,j} = 0$ for $j = 1, \ldots, n-1$, so that the $n$-th estimate for $u$ becomes
    \[
    \bigg\| 
    \begin{pmatrix}
        u(t)\\
        u_t(t)
    \end{pmatrix} - e^{A_{\infty}t} V_{\infty,n} \bigg\|_{(s)} \leq C \langle t \rangle^N e^{(\kappa_1 - n\beta_{\text{rem}})t}
    \]
    for some $V_{\infty,n} \in C^{\infty}(\bar{M},E^n)$. This estimate implies
    \[
    |v(t) - e^{A_{\infty}t} v_{\infty,n}| \leq C \langle t \rangle^N e^{(\kappa_1 - n\beta_{\text{rem}})t},
    \]
    where $v_{\infty,n}(\iota) = \langle V_{\infty,n}, \varphi_{\iota} \rangle_B \in E^n$. But we already know that \eqref{chi mode estimate} holds, thus
    \[
    |e^{A_{\infty}t}(v_{\infty,n} - \hat{\chi})| \leq C \langle t \rangle^N e^{(\kappa_1 - n \beta_{\text{rem}})t};
    \]
    and since $v_{\infty,n} - \hat{\chi} \in E^n$, for this estimate to hold, we must have $v_{\infty,n} = \hat{\chi}$. Therefore $V_{\infty,n} = \chi$.
\end{proof}

\begin{remark} \label{silent remark}
    Regarding the condition \eqref{low silent}, there is a criterion similar to Lemma \ref{silent lemma} which ensures that \eqref{low silent} holds, and is more easily verifiable in practice. Namely, if there is a ${0 < \mu_+ \in \mathbb{R}}$ and a continuous non-negative function $\mathfrak{e}_+ \in L^1([0,\infty))$ such that the associated metric $g$ satisfies
    \[
    \bar{k} \leq (\mu_+ + \mathfrak{e}_+) \bar{g},
    \]
    then \eqref{low silent} holds. For details, see \cite[Remark 25.14]{hans}.
\end{remark}

\subsection{Homeomorphism between initial data and asymptotic data}

As we did in Section \ref{ode analysis}, we collect asymptotic data of all orders in a single function.

\begin{definition} \label{pde asymptotic data}
    Let \eqref{pde} be weakly silent, balanced and convergent, and assume that \eqref{f condition} holds. Let $u$ be a solution to \eqref{pde}. Define $\mathcal{V}_{\infty} \in C^{\infty}(\bar{M}, \mathbb{C}^{2m})$ by
    \[
    \mathcal{V}_{\infty} := V_{\infty,1} + \pi_2(V_{\infty,2}) + \cdots + \pi_{\mathcal{N}}(V_{\infty,\,\mathcal{N}}).
    \]
    Then $\mathcal{V}_{\infty}$ is called \emph{asymptotic data} for $u$.
\end{definition}

We show that under the conditions of Lemma \ref{specify data pde}, $\mathcal{V}_{\infty}$ uniquely determines $u$.

\begin{theorem} \label{homeomorphism}
    Let \eqref{pde} be weakly silent, balanced and convergent. Suppose that $f = 0$ and that there is a constant $\mathfrak{b}_{\text{\emph{low}}} > 0$ and a non-negative continuous function $\mathfrak{e}_{\text{\emph{low}}} \in L^1([0,\infty))$ such that \eqref{low silent} holds for $t \geq 0$ and $\iota \neq 0$. Then there is a linear isomorphism
    \[
    \Phi_{\infty}:C^{\infty}(\bar{M},\mathbb{C}^{2m}) \to C^{\infty}(\bar{M},\mathbb{C}^{2m}) 
    \]
    such that, for $\mathcal{V}_{\infty} \in C^{\infty}(\bar{M},\mathbb{C}^{2m})$, 
    \[
    \|\Phi_{\infty}(\mathcal{V}_{\infty})\|_{(s)} \leq C\|\mathcal{V}_{\infty}\|_{(s + \xi_{\infty})},
    \]
    where $C$ and $\xi_{\infty}$ only depend on the coefficients of the equation, $\mathfrak{b}_{\text{\emph{low}}}$ and $\|\mathfrak{e}_{\text{\emph{low}}}\|_1$. Furthermore, if $u$ is the solution of \eqref{pde} such that
    \[
    \begin{pmatrix}
        u(0)\\
        u_t(0)
    \end{pmatrix} = \Phi_{\infty}(\mathcal{V}_{\infty}),
    \]
    then $u$ has asymptotic data given by $\mathcal{V}_{\infty}$.
\end{theorem}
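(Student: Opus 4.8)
The plan is to carry out, at the level of Fourier modes, the construction of Proposition~\ref{ode iso}, and then to verify that the resulting mode-by-mode maps assemble into bounded maps between the spaces $H^s(\bar M,\mathbb{C}^{2m})$, hence between copies of $C^\infty(\bar M,\mathbb{C}^{2m})$. The two building blocks are already available: Lemma~\ref{specify data pde} gives, for each $1\le n\le\mathcal{N}$, the injective linear map $\Phi_{\infty,n}:C^\infty(\bar M,E^n)\to C^\infty(\bar M,\mathbb{C}^{2m})$ producing initial data whose solution has vanishing asymptotic data of orders $1,\dots,n-1$ and $n$-th order asymptotic datum equal to the prescribed one; and Theorem~\ref{pde asymptotic lemma} lets me define the \emph{extraction maps} $\mathcal{L}_n:C^\infty(\bar M,\mathbb{C}^{2m})\to C^\infty(\bar M,E^n)$, $1\le n\le\mathcal{N}$, sending initial data $(u(0),u_t(0))$ to $\pi_n(V_{\infty,n})$, the $n$-th order asymptotic datum of the corresponding solution, with $\|\mathcal{L}_n(u(0),u_t(0))\|_{(s)}\le C(\|u_t(0)\|_{(s+s_{\text{hom},n})}+\|u(0)\|_{(s+s_{\text{hom},n}+1)})$.

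First I would record the additivity of asymptotic data when $f=0$: if $u,\tilde u$ solve \eqref{pde} then $\mathcal{V}_\infty[u+\tilde u]=\mathcal{V}_\infty[u]+\mathcal{V}_\infty[\tilde u]$. This follows mode by mode from Lemma~\ref{add data}, since the Fourier coefficients of $\mathcal{V}_\infty[u]$ are exactly the ODE asymptotic data of the modes of $u$; in particular the $\mathcal{L}_n$ are linear. Then, as in the proof of Proposition~\ref{ode iso}, I construct $\Phi_\infty$ by a telescoping recursion: writing $\mathcal{V}_\infty=\sum_{n=1}^{\mathcal{N}}\pi_n(\mathcal{V}_\infty)$, set $D_1:=\pi_1(\mathcal{V}_\infty)$ and $D_n:=\pi_n(\mathcal{V}_\infty)-\mathcal{L}_n\big(\Phi_{\infty,1}(D_1)+\cdots+\Phi_{\infty,n-1}(D_{n-1})\big)$ for $2\le n\le\mathcal{N}$, and put $\Phi_\infty(\mathcal{V}_\infty):=\sum_{n=1}^{\mathcal{N}}\Phi_{\infty,n}(D_n)$. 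This is linear, being a composition of linear maps; iterating the bounds for the $\mathcal{L}_n$ and the bounds for the $\Phi_{\infty,n}$ from Lemma~\ref{specify data pde} (and using that $\mathcal{N}$ is finite) gives $\|\Phi_\infty(\mathcal{V}_\infty)\|_{(s)}\le C\|\mathcal{V}_\infty\|_{(s+\xi_\infty)}$ for a finite derivative loss $\xi_\infty$ depending only on the coefficients of the equation, $\mathfrak{b}_{\text{low}}$ and $\|\mathfrak{e}_{\text{low}}\|_1$; in particular $\Phi_\infty(\mathcal{V}_\infty)\in C^\infty(\bar M,\mathbb{C}^{2m})$. That the solution with data $\Phi_\infty(\mathcal{V}_\infty)$ has asymptotic data $\mathcal{V}_\infty$ follows by additivity: if $u_n$ is the solution with data $\Phi_{\infty,n}(D_n)$, then an easy induction on $k$, using the defining properties of $\Phi_{\infty,n}$ and the definition of $D_n$, shows $\sum_{n\le k}u_n$ has $j$-th order asymptotic datum $\pi_j(\mathcal{V}_\infty)$ for all $j\le k$; taking $k=\mathcal{N}$ gives the claim.

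It remains to show $\Phi_\infty$ is bijective. Injectivity is immediate, since $\Phi_\infty(\mathcal{V}_\infty)=0$ forces the zero solution, whose asymptotic data is $0=\mathcal{V}_\infty$. For surjectivity, given $(u_0,u_1)\in C^\infty(\bar M,\mathbb{C}^{2m})$, let $u$ be the solution with that data, $\mathcal{V}_\infty$ its asymptotic data, and $u'$ the solution with data $\Phi_\infty(\mathcal{V}_\infty)$; then $w:=u-u'$ has vanishing asymptotic data of orders $1,\dots,\mathcal{N}$, and it suffices to prove $w\equiv0$. Passing to Fourier modes, each mode $v_w(\iota)$ solves an equation of the form \eqref{ode} with $F=0$ and has vanishing ODE asymptotic data of all orders $1,\dots,\mathcal{N}$; but Proposition~\ref{ode iso} identifies the asymptotic-data map of \eqref{ode} with $\Psi_\infty^{-1}$, which is injective, so $v_w(\iota)\equiv0$ for every $\iota$, hence $w\equiv0$ and $\Phi_\infty(\mathcal{V}_\infty)=(u_0,u_1)$. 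Thus $\Phi_\infty$ is a bijection, and together with the bound above it is the claimed linear isomorphism; the bound $\|V_{\infty,n}\|_{(s)}\le C(\cdots)$ from Theorem~\ref{pde asymptotic lemma} shows moreover that $\Phi_\infty^{-1}$ is continuous.

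I expect the main obstacle to be the uniformity in $\iota$, which is exactly what is packaged in Lemma~\ref{specify data pde}: its estimate \eqref{energy} controls the energy at $t=0$ of the mode built by $\Phi_{\infty,n}$ by a \emph{fixed} power $\langle\nu(\iota)\rangle^{s+s_\infty}$ of the frequency, which is what makes the Fourier series defining $\Phi_\infty(\mathcal{V}_\infty)$ converge in every Sobolev norm. This is precisely where the extra hypothesis enters: by Remark~\ref{silent remark} the upper bound $\bar k\le(\mu_+ +\mathfrak{e}_+)\bar g$ yields \eqref{low silent}, i.e.\ a lower bound on $\dot\ell$, and this keeps $T_{\text{ode}}$ logarithmic in $\langle\nu(\iota)\rangle$, so the backward-in-time growth factor $e^{(\kappa_1+1)T_{\text{ode}}}$ incurred when transporting asymptotic data back to $t=0$ stays polynomial in the frequency. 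The only other point requiring care is to make sure the telescoping identity for the asymptotic data of $\sum_n u_n$ holds for the global asymptotic-data functions and not merely mode by mode, which is handled by the additivity step together with the identification of the Fourier coefficients of $\mathcal{V}_\infty$ with the mode-wise asymptotic data.
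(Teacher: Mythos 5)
Your proposal is correct and follows essentially the same route as the paper's proof: the same telescoping recursion $D_n=\pi_n(\mathcal{V}_\infty)-L_n(\Phi_{\infty,1}(D_1)+\cdots+\Phi_{\infty,n-1}(D_{n-1}))$ built from Lemma~\ref{specify data pde} and the extraction maps of Theorem~\ref{pde asymptotic lemma}, additivity of asymptotic data as in Lemma~\ref{add data}, and surjectivity by showing the difference of solutions has vanishing asymptotic data mode by mode and invoking Proposition~\ref{ode iso}. The points you flag (uniformity in $\iota$ via \eqref{energy} and the role of \eqref{low silent}) are exactly where the paper places the weight as well.
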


\begin{proof}
    First note that, by an argument similar to the one used for Lemma \ref{add data}, asymptotic data can be added. Let $L_n: C^{\infty}(\bar{M},\mathbb{C}^{2m}) \to C^{\infty}(\bar{M},E^n)$ be the map that takes initial data for \eqref{pde} at $t = 0$ to asymptotic data of order $n$. By Theorem \ref{pde asymptotic lemma}, it satisfies the estimate
    \[
    \begin{split}
        \|L_n(\eta_0,\eta_1)\|_{(s)} &\leq C(\|\eta_1\|_{(s + s_{\text{hom}, n})} + \|\eta_0\|_{(s + s_{\text{hom}, n} + 1)})\\
        &\leq C\|(\eta_0,\eta_1)\|_{(s + s_{\text{hom}, n} + 1)}
    \end{split}
    \]
    where $u(0) = \eta_0$ and $u_t(0) = \eta_1$. Suppose we want to specify $\mathcal{V}_{\infty} \in C^{\infty}(\bar{M},\mathbb{C}^{2m})$. Similarly to the proof of Proposition \ref{ode iso}, define $D_n$ by
    \[
    D_n := \pi_n(\mathcal{V}_{\infty}) - L_n(\Phi_{\infty,1}(D_1) + \cdots + \Phi_{\infty,n-1}(D_{n-1}))
    \]
    for $n \geq 2$ and let $D_1 := \pi_1(\mathcal{V}_{\infty})$, where the maps $\Phi_{\infty,n}$ are the ones from Lemma \ref{specify data pde}. Now define the map $\Phi_{\infty}$ by
    \[
    \Phi_{\infty}(\mathcal{V}_{\infty}) := \Phi_{\infty,1}(D_1) + \cdots + \Phi_{\infty,\,\mathcal{N}}(D_{\mathcal{N}}).
    \]
    Since we can add asymptotic data, the solution $u$ such that
    \[
    \begin{pmatrix}
        u(0)\\
        u_t(0)
    \end{pmatrix} = \Phi_{\infty}(\mathcal{V}_{\infty})
    \]
    has asymptotic data 
 given by $\mathcal{V}_{\infty}$. The map $\Phi_{\infty}$ is linear and injective, similarly as in the proof of Proposition \ref{ode iso}. Furthermore, by using the estimates for $L_n$ and $\Phi_{\infty,n}$, we have
    \[
    \|\Phi_{\infty}(\mathcal{V}_{\infty})\|_{(s)} \leq C \|\mathcal{V}_{\infty}\|_{(s + \xi_{\infty})}
    \]
    where $\xi_{\infty} = a_0 s_{\infty} + a_1 s_{\text{hom},1} + \cdots + a_{\mathcal{N}} s_{\text{hom},\,\mathcal{N}} + b$, for $a_i$ and $b$ some positive integers depending only on $\mathcal{N}$. It remains to show that $\Phi_{\infty}$ is surjective. Let $\psi \in C^{\infty}(\bar{M},\mathbb{C}^{2m})$ and let $u$ be the solution of \eqref{pde} such that
    \[
    \begin{pmatrix}
        u(0)\\
        u_t(0)
    \end{pmatrix} = \psi.
    \]
    Denote by $\mathcal{V}_{\infty} \in C^{\infty}(\bar{M},\mathbb{C}^{2m})$ the asymptotic data for $u$. Let $\bar{u}$ be the solution of \eqref{pde} such that 
    \[
    \begin{pmatrix}
        \bar{u}(0)\\
        \bar{u}_t(0)
    \end{pmatrix} = \Phi_{\infty}(\mathcal{V}_{\infty}),
    \]
    then $u - \bar{u}$ has vanishing asymptotic data, which for the Fourier coefficients, implies that $v - \bar{v}$ has vanishing asymptotic data for all $\iota$. By Proposition \ref{ode iso}, we conclude that $v = \bar{v}$ for all $\iota$. In particular $v(0,\iota) = \bar{v}(0,\iota)$ for all $\iota$, hence $u(0) = \bar{u}(0)$ and $u_t(0) = \bar{u}_t(0)$, implying $\Phi_{\infty}(\mathcal{V}_{\infty}) = \psi$.
\end{proof}

\begin{remark}
    Similarly to Remark~\ref{inhomogeneous}, Theorem~\ref{homeomorphism} can be used to obtain conclusions for \eqref{pde} when it is inhomogeneous.
\end{remark}

To finish this section we illustrate the results obtained by applying them to a simple example.

\begin{example} \label{the example}
Consider the equation
\begin{equation} \label{example}
    u_{tt} - e^{-2t}u_{\theta \theta} + u_t + e^{-t} u_{\theta} = 0
\end{equation}
on $\mathbb{R} \times S^1$. The metric associated with \eqref{example} is
\[
g = -dt \otimes dt + e^{2t} d\theta \otimes d\theta,
\]
thus $(\mathbb{R} \times S^1, g)$ is a canonical separable cosmological model manifold; cf. Definition~\ref{canonical}. The second fundamental form of the constant $t$ hypersurfaces is given by
\[
\bar{k}(\partial_{\theta},\partial_{\theta}) = g(\nabla_{\partial_{\theta}} \partial_t, \partial_{\theta}) = e^{2t}, 
\]
that is 
\[
\bar{k} = e^{2t} d\theta \otimes d\theta = \bar{g}.
\]
We also have $\mathcal{X} = e^{-t} \partial_{\theta}$. Thus $|\mathcal{X}|_{\bar{g}} = \sqrt{2}$ (note that in this case $\bar{h} = \bar{g}$). Note that $\alpha$ and $\zeta$ are constant, hence \eqref{example} is weakly convergent with $\eta_{\text{mn}}$ any positive real number. Since $\chi = 0$ in this case, we can apply Lemma~\ref{silent lemma} to conclude that \eqref{example} is weakly silent with $\mathfrak{b}_{\text{s}} = 1$, and weakly balanced. Moreover, $\bar{k} = \bar{g}$ means that we can also apply Remark \ref{silent remark}, so that \eqref{low silent} is satisfied. We see that $\beta_{\text{rem}} = 1$ and the hypotheses of Theorems \ref{pde asymptotic lemma} and \ref{homeomorphism} are satisfied. In this case,
\[
A_{\infty} = 
\begin{pmatrix}
    0 & \phantom{-}1\\
    0 & -1
\end{pmatrix},
\]
which has eigenvalues $0$ and $-1$, and the generalized eigenspaces are
\[
E^1 = \generated \bigg\{
\begin{pmatrix}
    1\\
    0
\end{pmatrix}
\bigg\}, \qquad E^2 = \generated \bigg\{ 
\begin{pmatrix}
    1\\
    -1
\end{pmatrix} \bigg\}.
\]
For $n = 1$, Theorem \ref{pde asymptotic lemma} yields 
\[
\bigg\| 
\begin{pmatrix}
    u\\
    u_t
\end{pmatrix} - 
\begin{pmatrix}
    u_{\infty}\\
    0
\end{pmatrix}
\bigg\|_{(s)} \leq C \langle t \rangle^N e^{-t},
\]
where $u_{\infty}$ is a function in $S^1$ and we allow $C$ to depend on the solution. Note that this is the estimate that would be obtained by applying \cite[Lemma 10.17]{hans}. Now we compute the $n = 2$ estimate.
\[
F_{\infty, 2}(t) = V_{\infty, 2} + \int_0^t e^{-A_{\infty}\tau} D 
\begin{pmatrix}
    u_{\infty}\\
    0
\end{pmatrix} d\tau,
\]
where the operator $D$ is given by
\[
D = \begin{pmatrix}
    0 & 0\\
    e^{-2t} \partial_{\theta}^2 - e^{-t} \partial_{\theta} & 0
\end{pmatrix}.
\]
Hence
\[
\begin{split}
    \int_0^t e^{-A_{\infty}\tau} D 
    \begin{pmatrix}
        u_{\infty}\\
        0
    \end{pmatrix} d\tau &= \int_0^t e^{-A_{\infty}\tau} 
    \begin{pmatrix}
        0\\
        e^{-2\tau} \partial_{\theta}^2 u_{\infty} - e^{-\tau} \partial_{\theta} u_{\infty}
    \end{pmatrix} d\tau\\
    &= \int_0^t (e^{-2\tau} \partial_{\theta}^2 u_{\infty} - e^{-\tau} \partial_{\theta} u_{\infty}) \bigg(
    \begin{pmatrix}
        1\\
        0
    \end{pmatrix} + e^{\tau}
    \begin{pmatrix}
        -1\\
        1
    \end{pmatrix} \bigg) d\tau\\
    &= -\frac{1}{2} (e^{-2t} - 1) \partial_{\theta}^2 u_{\infty}
    \begin{pmatrix}
        1\\
        0
    \end{pmatrix} - (e^{-t} - 1) \partial_{\theta}^2 u_{\infty}
    \begin{pmatrix}
        -1\\
        1
    \end{pmatrix}\\
    &\phantom{=} \; + (e^{-t} - 1) \partial_{\theta} u_{\infty}
    \begin{pmatrix}
        1\\
        0
    \end{pmatrix} + \partial_{\theta} u_{\infty} t 
    \begin{pmatrix}
        1\\
        -1
    \end{pmatrix}.
    \end{split}
\]
We obtain
\[\begin{split}
    \bigg\| 
\begin{pmatrix}
    u\\
    u_t
\end{pmatrix} - e^{A_{\infty}t} V_{\infty,2} &- \bigg(\frac{1}{2} \partial_{\theta}^2 u_{\infty} - \partial_{\theta} u_{\infty}\bigg) 
\begin{pmatrix}
    1\\
    0
\end{pmatrix} + e^{-2t} \partial_{\theta}^2 u_{\infty} 
\begin{pmatrix}
    -1/2\\
    1
\end{pmatrix}\\
&- e^{-t} \partial_{\theta}^2 u_{\infty} 
\begin{pmatrix}
    -1\\
    1
\end{pmatrix} - e^{-t} \partial_{\theta} u_{\infty}
\begin{pmatrix}
    1\\
    0
\end{pmatrix} - te^{-t} \partial_{\theta} u_{\infty} 
\begin{pmatrix}
    1\\
    -1
\end{pmatrix}
\bigg\|_{(s)} \leq C \langle t \rangle^N e^{-2t},
\end{split}
\]
where the term involving $e^{-2t}$ is an error term, so we throw it away. By splitting $V_{\infty,2}$ into its components relative to the eigenspaces of $A_{\infty}$, we can write
\[
V_{\infty,2} = \tilde{u}_{\infty} 
\begin{pmatrix}
    1\\
    0
\end{pmatrix} + \tilde{v}_{\infty}
\begin{pmatrix}
    1\\
    -1
\end{pmatrix},
\]
where $\tilde{u}_{\infty}$ and $\tilde{v}_{\infty}$ are some functions on $S^1$. The estimate becomes
\[
\begin{split}
    \bigg\| 
\begin{pmatrix}
    u\\
    u_t
\end{pmatrix} - \bigg(\tilde{u}_{\infty} + \frac{1}{2} \partial_{\theta}^2 &u_{\infty} - \partial_{\theta} u_{\infty}\bigg) 
\begin{pmatrix}
    1\\
    0
\end{pmatrix} - e^{-t} \tilde{v}_{\infty}  
\begin{pmatrix}
    1\\
    -1
\end{pmatrix}\\
&- e^{-t} \partial_{\theta}^2 u_{\infty} 
\begin{pmatrix}
    -1\\
    1
\end{pmatrix} - e^{-t} \partial_{\theta} u_{\infty}
\begin{pmatrix}
    1\\
    0
\end{pmatrix} - te^{-t} \partial_{\theta} u_{\infty} 
\begin{pmatrix}
    1\\
    -1
\end{pmatrix}
\bigg\|_{(s)} \leq C \langle t \rangle^N e^{-2t}.
\end{split}
\]
But we have uniqueness of $u_{\infty}$ from the $n = 1$ estimate. Therefore
\begin{equation} \label{data clarification}
    u_{\infty} = \tilde{u}_{\infty} + \frac{1}{2} \partial_{\theta}^2 u_{\infty} - \partial_{\theta} u_{\infty}.
\end{equation}
Hence we can write the estimate as,
\[
\bigg\|
\begin{pmatrix}
    u\\
    u_t
\end{pmatrix} - u_{\infty} 
\begin{pmatrix}
    1\\
    0
\end{pmatrix} - \tilde{v}_{\infty} e^{-t}
\begin{pmatrix}
    1\\
    -1
\end{pmatrix} - \partial_{\theta} u_{\infty} e^{-t}
\begin{pmatrix}
    t + 1\\
    -t
\end{pmatrix} + \partial_{\theta}^2 u_{\infty} e^{-t}
\begin{pmatrix}
    1\\
    -1
\end{pmatrix}
\bigg\|_{(s)} \leq C \langle t \rangle^N e^{-2t}.
\]
At this point it would seem like the term involving $\partial_{\theta}^2 u_{\infty}$ is redundant, since it has the same form as the term involving $\tilde{v}_{\infty}$. To make this precise, consider the map $L: C^{\infty}(S^1,\mathbb{R}^2) \to C^{\infty}(S^1,\mathbb{R}^2)$ defined by
\[
L \begin{pmatrix}
    u\\
    v
\end{pmatrix} :=
\begin{pmatrix}
    u\\
    v + \partial_{\theta}^2 u
\end{pmatrix}.
\]
Clearly it is linear and bijective. Also,
\[
\bigg\| L
\begin{pmatrix}
    u\\
    v
\end{pmatrix}
\bigg\|_{(s)} \leq \|u\|_{(s)} + \|v\|_{(s)} + \|\partial_{\theta}^2 u\|_{(s)} \leq \bigg\| 
\begin{pmatrix}
    u\\
    v
\end{pmatrix}
\bigg\|_{(s + 2)},
\]
so $L$ is continuous and similarly $L^{-1}$ is continuous. Note that the asymptotic data for $u$ is given by 
\[
\mathcal{V}_{\infty} = V_{\infty, 1} + \pi_2(V_{\infty, 2}) = u_{\infty} 
\begin{pmatrix}
    1\\
    0
\end{pmatrix} + \tilde{v}_{\infty} 
\begin{pmatrix}
    1\\
    -1
\end{pmatrix},
\]
and Theorem \ref{homeomorphism} implies that the solution $u$ is uniquely determined by the functions $u_{\infty}$ and $\tilde{v}_{\infty}$. Define 
\[
v_{\infty} := \tilde{v}_{\infty} - \partial_{\theta}^2 u_{\infty}.
\]
Then $L$ ensures that $u$ is also uniquely determined by $u_{\infty}$ and $v_{\infty}$. Since $L$ is a homeomorphism, $v_{\infty}$ can serve as second order asymptotic data instead of $\tilde{v}_{\infty}$. We finish by writing the estimate as
\[
\bigg\|
\begin{pmatrix}
    u\\
    u_t
\end{pmatrix} - u_{\infty} 
\begin{pmatrix}
    1\\
    0
\end{pmatrix} - v_{\infty} e^{-t}
\begin{pmatrix}
    1\\
    -1
\end{pmatrix} - \partial_{\theta} u_{\infty} e^{-t}
\begin{pmatrix}
    t + 1\\
    -t
\end{pmatrix} \bigg\|_{(s)} \leq C \langle t \rangle^N e^{-2t}.
\]
\end{example}

\begin{remark}
    This example serves to illustrate why our definition of asymptotic data makes sense, since \eqref{data clarification} shows explicitly how $\pi_1(V_{\infty,2})$ is completely determined by $\pi_1(V_{\infty,1}) = u_{\infty}$. If we consider the estimates for $n \geq 3$, similar expressions will arise for $\pi_1(V_{\infty,n})$ and $\pi_2(V_{\infty,n})$ in terms of $u_{\infty}$ and $v_{\infty}$; hence, as functions on $S^1$, the functions $F_{\infty,n}$ only depend on $u_{\infty}$, $v_{\infty}$ and their derivatives. 
\end{remark}

\section{Maxwell's equations in Kasner spacetimes} \label{maxwells equations}

In what follows, we apply the results obtained in Section \ref{pde analysis} to analyze solutions to the source free Maxwell's equations near the initial singularity of Kasner spacetimes. In particular, we study how inertial observers measure the energy density of such solutions near the big bang. After obtaining the dominant behavior by using Theorem \ref{pde asymptotic theorem}, we will be able to use Theorem~\ref{homeomorphism theorem} to ensure that this behavior is actually generic.  

Let $(M,g)$ be a spacetime. In relativistic electrodynamics, the electromagnetic field is given by a 2-form $F$, the Faraday tensor, which satisfies the following equations,
\begin{align*}
    \diver F & = -4 \pi J, \\
    dF & = 0. 
\end{align*}
Where $J$ is the charge density one form (see \cite{wald}). Since $dF = 0$, locally there is a one form $\omega$, the potential, such that $d\omega = F$. The Weizenböck formula for one forms, \cite[Corollary~7.21]{petersen}, reads
\begin{equation} \label{divergence of the differential}
    \diver d\omega = \Box_g \omega - \ric(\,\cdot\,,\omega^{\sharp}) - d\diver \omega
\end{equation}
Thus we obtain the following expression for the first equation in terms of $\omega$,
\[
\square_g \omega - \ric(\, \cdot \,, \omega^{\sharp}) - d\diver \omega = -4\pi J.
\]
Here
\[
\square_g \omega = \tr_g(\nabla^2) \omega = g^{ij}(\nabla_{\partial_i} \nabla_{\partial_j} \omega - \nabla_{\nabla_{\partial_i} \partial_j} \omega).
\]
Since with regards to $\omega$, we only care about the value of $d\omega$, we have the freedom to replace it by $\tilde{\omega} = \omega + du$ for any function $u \in C^{\infty}(M)$. We introduce the Lorenz gauge condition, which consists of choosing $u$ such that it satisfies
\[
\square_g u = -\diver \omega,
\]
so that $\diver \tilde{\omega} = 0$. This means that we can assume $\diver \omega = 0$ from the beginning, and the equation for $\omega$ becomes
\begin{equation} \label{lorenz gauge equation}
    \square_g \omega - \ric(\, \cdot \,, \omega^{\sharp}) = -4\pi J,
\end{equation}
which is a wave equation. In order to study the initial value problem for Maxwell's equations by using the wave equation above, we need to translate the gauge condition $\diver \omega = 0$ into constraint equations for the initial data.

\begin{lemma} \label{propagation of constraints}
    Let $(M,g)$ be a spacetime and $\omega \in \Omega^1(M)$ be a solution to \eqref{lorenz gauge equation}. If $\diver J = 0$, then $\Box_g \diver \omega = 0$.
\end{lemma}

\begin{proof}
    We compute using the Weizenböck formula for one forms, \cite[Corollary 7.21]{petersen},
    \[
    \diver( \Box_g \omega - \ric(\,\cdot\,,\omega^{\sharp}) ) = \delta(d \delta + \delta d)\omega = \delta d \delta \omega = \Box_g \diver \omega,
    \]
    where $\delta$ is the codifferenial. The lemma follows.
\end{proof}

Since charge is a conserved quantity, the charge density $J$ must satisfy $\diver J = 0$. Assume $\Sigma$ is a Cauchy surface in $(M,g)$ with unit normal $U$. By Lemma~\ref{propagation of constraints}, in order to ensure that a solution $\omega$ to \eqref{lorenz gauge equation} satisfies $\diver \omega = 0$, it is enough to ensure that $\diver \omega |_{\Sigma} = U \diver \omega |_{\Sigma} = 0$. But to translate these into constraints for initial data, we need an expression for $U \diver \omega$, which does not involve second $U$ derivatives of $\omega$.

\begin{lemma} \label{derivative of the divergence}
    Let $(M,g)$ be a spacetime, $\Sigma \subset M$ a spacelike hypersurface with unit normal $U$, and $\omega \in \Omega^1(M)$ a solution to \eqref{lorenz gauge equation}. Then
    \[
    U \diver \omega = -\diver(d\omega)(U)  - 4\pi J(U).
    \]
    In particular, if $\{e_1,\ldots,e_n\}$ is an orthonormal frame for $\Sigma$ with the induced metric,
    \[
    \diver(d\omega)(U) = \sum_i \Big( \nabla^2_{e_i,U} \omega(e_i) - \nabla^2_{e_i,e_i} \omega(U) \Big), 
    \]
    which does not contain second $U$ derivatives of $\omega$.
\end{lemma}

\begin{proof}
    This follows immediately from \eqref{divergence of the differential} and \eqref{lorenz gauge equation}.
\end{proof}

\begin{definition}
    Let $(M,g)$ be a spacetime and $\Sigma \subset M$ a Cauchy surface with unit normal $U$. Let $\omega \in \Omega^1(M)$ be a solution to \eqref{lorenz gauge equation}. Then the induced initial data on $\Sigma$, given by $\omega|_{\Sigma}$ and $\nabla_U \omega|_{\Sigma}$, satisfy the \emph{constraint equations} for Maxwell's equations in the Lorenz gauge if
    \begin{subequations} \label{constraint equations}
        \begin{align}
            \diver \omega|_{\Sigma} &= 0,\\
            \diver(d\omega)|_{\Sigma}(U) &= -4\pi J|_{\Sigma}(U).
        \end{align}
    \end{subequations}
\end{definition}

If the constraint equations \eqref{constraint equations} are satisfied, then Lemmas~\ref{propagation of constraints} and \ref{derivative of the divergence} imply that $\diver \omega = 0$ everywhere. Hence $d\omega$ is a solution to Maxwell's equations.

Below, we study solutions $F$ to Maxwell's equations in Kasner spacetimes, by studying solutions to the wave equation \eqref{lorenz gauge equation} satisfied by a potential $\omega$ in the Lorenz gauge. Of course, there is the question of how relevant such an analysis is for general $F$, given that in principle the potential $\omega$ only exists locally. However, for the Kasner spacetimes, this issue can be dealt with by using cohomology; see Remark~\ref{localisation argument} below for details. For this reason, henceforth we assume $F = d\omega$ globally.

Recall that the Kasner spacetimes are the spacetimes $(M,g)$, where $M = (0, \infty) \times \mathbb{T}^3$,
\[
g = -dt \otimes dt + \sum_i t^{2p_i} dx^i \otimes dx^i
\]
and $p_i$ are constants. We assume that they are vacuum, which in this case is equivalent to the Kasner relations,
\[
\sum_i p_i = \sum_i p_i^2 = 1,
\]
being satisfied. Note that if $p_i = 1$ for some $i$, the above conditions force $p_j = 0$ for $j \neq i$ and the corresponding spacetime is actually flat. Here we focus on the other possibility, that is, we assume $p_i < 1$ for all $i$. 

Since $\ric = 0$, the source free Maxwell's equations for a potential $\omega$ in the Lorenz gauge reduce to $\square_g \omega = 0$. In the natural coordinates of $(M,g)$, the equation becomes the system
\[
\begin{cases}
&\displaystyle -\partial_t^2 \omega_t + \sum_i t^{-2p_i} \partial_i^2 \omega_t - \frac{1}{t} \partial_t \omega_t - \sum_i 2p_i t^{-2p_i - 1} \partial_i \omega_i + \frac{1}{t^2} \omega_t = 0, \\[15pt]
&\displaystyle -\partial_t^2 \omega_j + \sum_i t^{-2p_i} \partial_i^2 \omega_j + \frac{2p_j - 1}{t} \partial_t \omega_j - \frac{2p_j}{t} \partial_j \omega_t = 0,
\end{cases}
\]
for $j = 1, 2, 3$. For our main results to be applicable, we need to make some modifications to the system. We introduce the time coordinate $\tau = -\ln t$; cf. the comments made in connection to Definition~\ref{canonical}. Note that with this new coordinate, the initial singularity happens at $\tau = \infty$. The system becomes
\begin{subnumcases}{\label{bad equation}}
&\label{bad equation 1}$\displaystyle -\partial_{\tau}^2 \omega_{\tau} + \sum_i e^{-2(1 - p_i) \tau} \partial_i^2 \omega_{\tau} - 2\partial_{\tau} \omega_{\tau} + \sum_i 2p_i e^{-2(1 - p_i) \tau} \partial_i \omega_i = 0,$\\ 
&$\displaystyle -\partial_{\tau}^2 \omega_j + \sum_i e^{-2(1 - p_i) \tau} \partial_i^2 \omega_j - 2p_j \partial_{\tau} \omega_j + 2p_j \partial_j \omega_{\tau} = 0,$
\end{subnumcases}
for $j = 1, 2, 3$. In the time coordinate $\tau$, the metric takes the form 
\[
g = -e^{-2\tau} d\tau \otimes d\tau + \sum_i e^{-2p_i \tau} dx^i \otimes dx^i;
\]
and the metric associated with \eqref{bad equation} is 
\begin{equation} \label{associated metric}
    \hat{g} = e^{2\tau}g = -d\tau \otimes d\tau + \sum_i e^{2(1 - p_i)\tau} dx^i \otimes dx^i,
\end{equation}
so that $(M,\hat{g})$ is a canonical separable cosmological model manifold. The system \eqref{bad equation} cannot satisfy the hypotheses of our results, since the coefficients of the spatial derivative terms do not all decay exponentially. In order to fix this, we consider the functions
\[
\widehat{\omega}_j := e^{-(1-p_j)\tau} \omega_j.
\]
For these, we obtain the system
\begin{subnumcases}{\label{good equation}}
&$ \displaystyle \partial^2_{\tau} \omega_{\tau} - \sum_i e^{-2(1-p_i)\tau} \partial^2_i \omega_{\tau} + 2\partial_{\tau} \omega_{\tau} - \sum_i 2p_i e^{-(1-p_i)\tau} \partial_i \widehat{\omega}_i = 0,$ \\
&$\displaystyle \partial^2_{\tau} \widehat{\omega}_j - \sum_i e^{-2(1-p_i)\tau} \partial^2_i \widehat{\omega}_j + 2\partial_{\tau} \widehat{\omega}_j - 2p_j e^{-(1-p_j)\tau} \partial_j \omega_{\tau} + (1-p_j^2) \widehat{\omega}_j = 0,$
\end{subnumcases}
for $j = 1, 2, 3$. It turns out to be the case that this system satisfies the conditions required to apply our results. Note that introducing the functions $\widehat{\omega}_j$, corresponds to considering the components of the one form $\omega$ in terms of an orthonormal dual frame for the metric $\hat{g}$.

\subsection{Asymptotic expansion for the potential of the electromagnetic field}

Our goal is to obtain information about the behavior of $\omega$ near $\tau = \infty$. Regarding the $p_i$, note that there is exactly one of them which is negative. From now on, we arrange them in ascending order; in that case $p_1 \in [-1/3,0)$, $p_2 \in (0,2/3]$ and $p_3 \in [2/3,1)$. We need to verify that \eqref{good equation} satisfies the hypotheses of Theorems \ref{pde asymptotic theorem} and \ref{homeomorphism theorem}. Recall that the metric associated with \eqref{good equation} is \eqref{associated metric}. We have that
\[
\hat{g}(\widehat{\nabla}_{\partial_i}\partial_{\tau}, \partial_i) = (1-p_i)e^{2(1-p_i)\tau},
\]
where $\widehat{\nabla}$ is the Levi-Civita connection of $\hat{g}$. So if $\bar{k}_{\hat{g}}$ is the second fundamental form of the $\mathbb{T}^3_{\tau} = \{\tau\} \times \mathbb{T}^3$ hypersurfaces with respect to $\hat{g}$, then
\[
\begin{split}
    \bar{k}_{\hat{g}} &= \sum_i (1-p_i)e^{2(1-p_i)\tau} dx^i \otimes dx^i,\\
    \bar{k}_{\hat{g}} &\geq (1-p_3) \bar{g}
\end{split}
\]
where $\bar{g}$ is the metric induced on $\mathbb{T}^3_{\tau}$ by $\hat{g}$. We also have 
\[
X^j(\tau) = 
\begin{pmatrix}
0 & \cdots & -2p_je^{-(1-p_j)\tau} & \cdots & 0 \\
\vdots & \phantom{0} & \phantom{0} & \phantom{0} & \vdots\\
-2p_je^{-(1-p_j)\tau} & \phantom{0} & \cdots & \phantom{0} & 0\\
\vdots & \phantom{0} & \phantom{0} & \phantom{0} & \vdots\\
0 & \phantom{0} & \cdots & \phantom{0} & 0
\end{pmatrix},
\]
hence
\[
|\mathcal{X}|_{\bar{g}} = \left( 2^3 \sum_i \hat{g}_{ii} ||X^i||^2 \right)^{1/2} \leq \left( 2^3 C \sum_i p_i^2 \right)^{1/2} = 2^{3/2} C^{1/2}
\]
for some constant $C$ (note that in this case $\bar{h} = \bar{g}$), thus $\mathcal{X}$ is $C^0$-future bounded. Since $\alpha$ and $\zeta$ are constant, \eqref{good equation} is weakly convergent with $\eta_{\text{mn}}$ any positive real number. Since $\chi = 0$, we conclude that \eqref{good equation} is $C^1$-silent with $\mu = 1-p_3$. Furthermore, we also have the bound $\bar{k}_{\hat{g}} \leq (1-p_1) \bar{g}$. We conclude that $\beta_{\text{rem}} = \mu = 1-p_3$ and \eqref{good equation} satisfies all the hypotheses of Theorems \ref{pde asymptotic theorem} and \ref{homeomorphism theorem}. 

We proceed by computing the estimates given by Theorem \ref{pde asymptotic theorem} for solutions of \eqref{good equation}. In the end, this will allow us to obtain an asymptotic expansion for $\omega$ as $\tau \to \infty$. We have
\[
A_{\infty} =
\begin{pmatrix}
\vphantom{\bigg(} 0 & \rvline & I_{4} \\
\hline
    \begin{matrix}
    0 & \phantom{0} & \phantom{0} & \phantom{0}\\
    \phantom{0} & p_1^2 - 1 & \phantom{0} & \phantom{0}\\
    \phantom{0} & \phantom{0} & p_2^2 - 1 & \phantom{0}\\
    \phantom{0} & \phantom{0} & \phantom{0} & p_3^2 - 1
    \end{matrix} &
\rvline & -2I_{4}
\end{pmatrix}.
\]
Note that, after an appropriate permutation of the rows and columns, $A_{\infty}$ can be written as a block diagonal matrix. Then $\det(A_{\infty} - \lambda I) = \lambda (2 + \lambda) \prod_i (\lambda^2 + 2\lambda + 1 - p_i^2)$ and the eigenvalues of $A_{\infty}$ are $0, -2, -1 \pm p_i$; hence $\kappa_1 = 0$. In order to compute the estimates, we need to figure out the terms of the form $e^{A_{\infty}\tau} V_{\infty,n}$. Note that these are solutions to the equation
\[
\frac{d}{d\tau} 
\begin{pmatrix}
    u\\
    \dot{u}
\end{pmatrix} = A_{\infty}
\begin{pmatrix}
    u\\
    \dot{u}
\end{pmatrix},
\]
where $u(\tau) \in \mathbb{R}^4$. But because of the form of $A_{\infty}$, the equation decouples into the following four individual equations,
\[
\begin{split}
    \frac{d}{d\tau}
    \begin{pmatrix}
        u_0\\
        \dot{u}_0
    \end{pmatrix} &= 
    \underbrace{\begin{pmatrix}
    0 & \phantom{-}1\\
    0 & -2
    \end{pmatrix}}_{A_0}
    \begin{pmatrix}
        u_0\\
        \dot{u}_0
    \end{pmatrix},\\
    \frac{d}{d\tau}
    \begin{pmatrix}
        u_i\\
        \dot{u}_i
    \end{pmatrix} &=
    \underbrace{\begin{pmatrix}
    0 & \phantom{-}1\\
    p_i^2 - 1 & -2
    \end{pmatrix}}_{A_i}
    \begin{pmatrix}
        u_i\\
        \dot{u}_i
    \end{pmatrix},
\end{split}
\]
where $u = (u_0,u_1,u_2,u_3)$. Also note that the differential operator $D$ is given by
\begin{equation} \label{D}
D
\begin{pmatrix}
    u\\
    \partial_{\tau} u
\end{pmatrix} = 
\begin{pmatrix}
    0\\
    \sum_i e^{-2(1-p_i)\tau} \partial_i^2 u - X^i(\tau) \partial_i u
\end{pmatrix},
\end{equation}
and the second component corresponds to the spatial derivative terms of the system. That is, if 
\[
\begin{split}
    D_0 u &= \sum_i (e^{-2(1-p_i)\tau} \partial_i^2 u_0 + 2p_i e^{-(1-p_i)\tau} \partial_i u_i),\\
    D_j u &= \sum_i e^{-2(1-p_i)\tau} \partial_i^2 u_j + 2p_j e^{-(1-p_j)\tau} \partial_j u_0,
\end{split}
\]
then $D (u,\partial_{\tau} u) = (0,0,0,0,D_0 u, D_1 u, D_2 u, D_3 u)$. Finally, note that the decomposition of $\mathbb{R}^8$ into eigenspaces of $A_{\infty}$ is compatible with the decomposition of $\mathbb{R}^2$ into eigenspaces of $A_0$ and the $A_i$. The eigenspaces of $A_{\infty}$ are given by
\[
\begin{split}
    E_{0, +} &= \generated \{ v_{0, +} = (1,0,0,0,0,0,0,0) \},\\
    E_{0, -} &= \generated \{ v_{0, -} = (1,0,0,0,-2,0,0,0) \},\\
    E_{1, \pm} &= \generated \{ v_{1, \pm} =(0,1,0,0,0,-1 \pm p_1,0,0)\},\\
    E_{2, \pm} &= \generated \{ v_{2, \pm} =(0,0,1,0,0,0,-1 \pm p_2,0)\},\\
    E_{3, \pm} &= \generated \{ v_{3, \pm} = (0,0,0,1,0,0,0,-1 \pm p_3)\}.
\end{split}
\]
For $v \in \mathbb{R}^8$ and $\mu = 0, \ldots, 3$, let $\mathcal{P}_{\mu}v \in \mathbb{R}^2$ have as first component the $\mu$-th component of $v$ and as second component the $(\mu + 4)$-th component of $v$. Then the eigenvector decomposition of $v$ is given by
\[
v = \sum_{\mu} (a_{\mu, +} v_{\mu,+} + a_{\mu,-} v_{\mu,-})  
\]
if and only if the eigenvector decompositions of $\mathcal{P}_0 v$ and $\mathcal{P}_i v$, with respect to $A_0$ and $A_i$ respectively, are given by
\[
\begin{split}
    \mathcal{P}_0 v &= a_{0,+} \mathcal{P}_0 v_{0,+} + a_{0,-} \mathcal{P}_0 v_{0,-} = 
    a_{0,+}
    \begin{pmatrix}
        1\\
        0
    \end{pmatrix} + a_{0,-}
    \begin{pmatrix}
        1\\
        -2
    \end{pmatrix},\\
    \mathcal{P}_i v &= a_{i,+} \mathcal{P}_i v_{i,+} + a_{i,-} \mathcal{P}_i v_{i,-} = 
    a_{i,+}
    \begin{pmatrix}
        1\\
        -1+p_i
    \end{pmatrix} + a_{i,-}
    \begin{pmatrix}
        1\\
        -1-p_i
    \end{pmatrix}.
\end{split}
\]
This implies that the way in which $e^{A_{\infty}\tau}$ acts on $v$ can be broken down in terms of $A_{\mu}$ as follows,
\[
\begin{split}
    \mathcal{P}_{\mu}(e^{A_{\infty}\tau}v) = e^{A_{\mu}\tau} \mathcal{P}_{\mu} v.
\end{split}
\]
From the previous discussion, we can conclude that the estimates \eqref{pde estimate 2} in this case can be broken down into the following estimates for $\omega_{\tau}$ and $\widehat{\omega}_i$,
\begin{equation} \label{broken estimate}
    \bigg \| 
\begin{pmatrix}
    \widehat{\omega}_{\mu}\\
    \partial_{\tau} \widehat{\omega}_{\mu}
\end{pmatrix} - e^{A_{\mu} \tau} \mathcal{P}_{\mu} V_{\infty, n} - \int_0^{\tau} e^{A_{\mu}(\tau - r)} 
\begin{pmatrix}
    0\\
    D_{\mu} \pi(F_{\infty, n-1}(r))
\end{pmatrix} dr
\bigg \|_{(s)} \leq C \langle \tau \rangle^N e^{-n(1-p_3)\tau},
\end{equation}
where $\widehat{\omega} = (\omega_{\tau},\widehat{\omega}_1, \widehat{\omega}_2, \widehat{\omega}_3)$, $\pi$ denotes projection onto the first four components and $C$ depends on the solution. 

Now we turn our attention to asymptotic data. If $\mathcal{V}_{\infty} \in C^{\infty}(\mathbb{T}^3, \mathbb{R}^8)$ is asymptotic data for the solution then, in terms of the eigenspaces of $A_{\infty}$, we have
\[
\mathcal{V}_{\infty} = \sum_{\mu} (u_{\mu,+} v_{\mu,+} + u_{\mu,-} v_{\mu,-}) 
\]
where $u_{\mu,\pm} \in C^{\infty}(\mathbb{T}^3)$. In particular
\begin{align*}
    \mathcal{P}_0 \mathcal{V}_{\infty} &= u_{0,+}
    \begin{pmatrix}
        1\\
        0
    \end{pmatrix} + u_{0,-}
    \begin{pmatrix}
        1\\
        -2
    \end{pmatrix},\\
    \mathcal{P}_i \mathcal{V}_{\infty} &= u_{i,+}
    \begin{pmatrix}
        1\\
        -1+p_i
    \end{pmatrix} + u_{i,-}
    \begin{pmatrix}
        1\\
        -1-p_i
    \end{pmatrix}.
\end{align*}
Hence all asymptotic data is contained in the functions $u_{\mu,\pm}$. Now we are ready to deduce an asymptotic expansion for the $\widehat{\omega}_{\mu}$. 

\begin{lemma}
    Let $\widehat{\omega} = (\omega_{\tau},\widehat{\omega}_1, \widehat{\omega}_2, \widehat{\omega}_3)$ be a smooth solution to \eqref{good equation}. Then for every positive integer $n$,
    \begin{equation} \label{hat expansion}
    \begin{split}
        &\bigg\|
        \begin{pmatrix}
            \omega_{\tau}\\
            \partial_{\tau} \omega_{\tau}
        \end{pmatrix} - 
        u_{0,+} \begin{pmatrix}
            1\\
            0
        \end{pmatrix} - \sum_{k = 1}^{q_{0n}} P_{0k}(\tau) e^{-a_{0k}\tau}
        \bigg\|_{(s)}\\
        &+ \sum_i \bigg\|
        \begin{pmatrix}
            \widehat{\omega}_i\\
            \partial_{\tau} \widehat{\omega}_i
        \end{pmatrix} - P_{i0}(\tau) e^{-(1-|p_i|)\tau} - \sum_{k = 1}^{q_{in}} P_{ik}(\tau) e^{-a_{ik}\tau}
        \bigg\|_{(s)} \leq C \langle \tau \rangle^N e^{-n(1-p_3)\tau},
    \end{split}
    \end{equation}
    where $q_{\mu n}$ are non-negative integers, non-decreasing in $n$ with $q_{\mu 1} = 0$; the $a_{\mu k}$ are positive real numbers satisfying $a_{jk} > 1 - |p_j|$, that only depend on the $p_i$ and are increasing in $k$; and the $P_{\alpha \beta}$ are polynomials in $\tau$ with coefficients in $C^{\infty}(\mathbb{T}^3, \mathbb{R}^2)$, whose components are functions in $C^{\infty}(\mathbb{T}^3)$ which only depend linearly on the $u_{\mu,\pm}$ and their derivatives. Furthermore, ${a_{01} = 2(1-p_3)}$, $a_{11} = \min\{ 1+p_1 + 2(1-p_3), 1-p_1 \}$ and $a_{i1} = 1-p_i + 2(1-p_3)$ for $i = 2, 3$.
\end{lemma}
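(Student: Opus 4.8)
The plan is to prove the asymptotic expansion \eqref{hat expansion} by induction on $n$, feeding the recursive structure of $F_{\infty,n}$ through the decoupled blocks $A_\mu$. First I would record the Jordan structure of each $A_\mu$ explicitly: $A_0$ has eigenvalues $0$ and $-2$ and is diagonalizable, so $e^{A_0\tau}\mathcal{P}_0 V_{\infty,n}$ contributes a constant term $u_{0,+}(1,0)^T$ plus a term decaying like $e^{-2\tau}$; each $A_i$ has eigenvalues $-1\pm p_i$, which are distinct when $p_i\neq 0$ (our standing assumption $p_i<1$ together with $p_1<0<p_2\le p_3$ ensures no $p_i$ vanishes among $i=2,3$, and $p_1\ne0$), so $e^{A_i\tau}\mathcal{P}_i V_{\infty,n}$ contributes terms decaying like $e^{-(1-p_i)\tau}$ and $e^{-(1+p_i)\tau}$. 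Writing $1-|p_i|$ identifies the slowest of these for each $i$; for $i=2,3$ this is $1-p_i$, while for $i=1$ (recall $p_1<0$) it is $1+p_1=1-|p_1|$. This justifies the leading term $P_{i0}(\tau)e^{-(1-|p_i|)\tau}$ in \eqref{hat expansion}, which at $n=1$ is the only non-constant term, consistent with $q_{\mu1}=0$.

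Next I would run the induction. The base case $n=1$ is exactly \eqref{broken estimate} with $F_{\infty,0}=0$: the right-hand side of \eqref{broken estimate} is the error $C\langle\tau\rangle^N e^{-(1-p_3)\tau}$ and the main term is $e^{A_\mu\tau}\mathcal{P}_\mu V_{\infty,1}$, which by the eigenvector analysis above is $u_{0,+}(1,0)^T$ for $\mu=0$ (the $e^{-2\tau}$ piece being absorbed into the error since $2>1-p_3$) and $P_{i0}(\tau)e^{-(1-|p_i|)\tau}$ plus an $e^{-(1+|p_i|)\tau}$ piece that is also absorbed into the error when $1+|p_i|>1-p_3$; one must check this inequality, which holds because $|p_i|\ge|p_1|\ge$ something, or more directly because the only borderline case is handled by the explicit formula for $a_{11}=\min\{1+p_1+2(1-p_3),\,1-p_1\}$. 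For the inductive step, assume \eqref{hat expansion} at level $n$; then $F_{\infty,n}$ has its first four components of the stated form (a sum of polynomial-times-exponential terms), and I plug this into the Duhamel term $\int_0^\tau e^{A_\mu(\tau-r)}(0,\,D_\mu\pi(F_{\infty,n}(r)))^T\,dr$ appearing in \eqref{broken estimate} at level $n+1$. The operator $D_\mu$ acts on spatial variables (producing spatial derivatives of the coefficient functions, hence the ``depend linearly on the $u_{\mu,\pm}$ and their derivatives'' clause) and multiplies by $e^{-2(1-p_i)\tau}$ or $e^{-(1-p_i)\tau}$; so each term $P(\tau)e^{-a\tau}$ in $F_{\infty,n}$ gives rise to terms $Q(\tau)e^{-(a+2(1-p_i))\tau}$ and $Q(\tau)e^{-(a+1-p_i)\tau}$ for various $i$, and the integral against $e^{A_\mu(\tau-r)}$ against a generalized-eigenvector basis produces a new polynomial-times-exponential with the same exponents (or, in the resonant case where an exponent equals an eigenvalue of $A_\mu$, raises the polynomial degree by one — this is fine, it just enlarges the degree of the $P_{\mu k}$). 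Collecting, the new exponents are obtained from the old by adding $2(1-p_i)$ or $1-p_i$, and one checks that every new exponent strictly exceeds $1-|p_\mu|$ (for $\mu\ge1$), because $1-p_i>0$; terms with exponent $\ge n(1-p_3)$ get absorbed into the new error $C\langle\tau\rangle^N e^{-(n+1)(1-p_3)\tau}$. This gives the expansion at level $n+1$ with $q_{\mu,n+1}\ge q_{\mu n}$.

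The last thing to pin down is the explicit values of the first correction exponents $a_{\mu1}$. For $\mu=0$: the smallest exponent arising from the Duhamel term applied to the $n=1$ data is obtained by applying $D_0$ to the slowest piece of $\pi(F_{\infty,0})$ — but $F_{\infty,0}=0$, so actually the first correction to $\omega_\tau$ beyond the constant $u_{0,+}$ comes from $F_{\infty,1}$, whose relevant slow terms are the constant $u_{0,+}$ in the $\omega_\tau$ slot (killed by $\partial_i$ only through its spatial dependence) and the $e^{-(1-|p_i|)\tau}$ terms in the $\widehat\omega_i$ slots; $D_0$ multiplies $\partial_i^2 u_0$ by $e^{-2(1-p_i)\tau}$ (slowest when $i=3$, giving exponent $2(1-p_3)$) and multiplies $\partial_i u_i$ by $2p_i e^{-(1-p_i)\tau}$, which against $\widehat\omega_i\sim e^{-(1-|p_i|)\tau}$ gives exponent $(1-p_i)+(1-|p_i|)$, smallest for... one checks this is never smaller than $2(1-p_3)$, yielding $a_{01}=2(1-p_3)$. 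For $\mu=i\ge2$: the competitors are $2p_i e^{-(1-p_i)\tau}\partial_i\omega_\tau$ acting on the constant part of $\omega_\tau$ — no, $\partial_i u_{0,+}$ is generically nonzero — hmm, this gives exponent $1-p_i$ directly from the source, but wait that coincides with the leading term; the genuinely new exponent is $(1-p_i)+2(1-p_3)$ from $e^{-2(1-p_3)\tau}\partial_3^2$ acting on $\widehat\omega_i$, and also $(1-p_i)+(1-p_i)$ from the cross term, the minimum being $1-p_i+2(1-p_3)$ since $p_i\le p_3$ forces $2(1-p_3)\le (1-p_i)+(1-p_3)\le \ldots$; careful bookkeeping gives $a_{i1}=1-p_i+2(1-p_3)$ for $i=2,3$. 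For $\mu=1$: here $1-|p_1|=1+p_1$, and there are two competing new exponents — $(1+p_1)+2(1-p_3)$ from the spatial Laplacian term, and $1-p_1$ from the second eigenvalue $-1-p_1$ of $A_1$ itself (the ``fast'' homogeneous mode, which is $e^{-(1+|p_1|)\tau}=e^{-(1-p_1)\tau}$) — whichever is smaller is the true first correction, hence $a_{11}=\min\{1+p_1+2(1-p_3),\,1-p_1\}$.

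I expect the main obstacle to be the careful combinatorial bookkeeping of exponents: one must verify that under the operations ``add $2(1-p_i)$'' and ``add $1-p_i$'' (for $i=1,2,3$, using $p_1<0$), starting from the finite seed set $\{0,-2,-1\pm p_i\}$ shifted appropriately, every exponent that is $<n(1-p_3)$ is strictly larger than $1-|p_\mu|$ in the relevant block, so that the leading term $P_{\mu0}e^{-(1-|p_\mu|)\tau}$ is never contaminated, and that the set of exponents below any fixed threshold is finite (so $q_{\mu n}$ is a well-defined finite integer). The resonance analysis — when a forcing exponent $a$ equals the real part of an eigenvalue of $A_\mu$, Duhamel raises the polynomial degree — needs to be tracked to justify that the $P_{\mu k}$ are genuinely polynomials (of controlled, though possibly growing, degree), but this is standard for constant-coefficient linear ODE forced by quasi-polynomials and does not threaten the stated form. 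Everything else — the mode-by-mode reduction, the passage from the $\langle\nu(\iota)\rangle$-weighted estimates to Sobolev estimates, and the uniqueness of the coefficient functions — follows from Theorem~\ref{pde asymptotic theorem} (equivalently Theorem~\ref{pde asymptotic lemma}) exactly as in Example~\ref{the example}.
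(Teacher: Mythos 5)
Your proposal is correct and follows essentially the same route as the paper: induction on $n$ through the recursion of Theorem~\ref{pde asymptotic theorem} in its block-decomposed form \eqref{broken estimate}, exponent bookkeeping for $D_\mu$ applied to the level-$n$ expansion, Duhamel integration against the eigenbases of $A_0$ and the $A_i$ (with the resonant polynomial-degree increase and the constants of integration handled via uniqueness of the leading data), yielding the stated values of $a_{01}$, $a_{11}$ and $a_{i1}$. Two cosmetic slips do not affect the argument: at $n=1$ one has $E^1=E_{0,+}$, so $\mathcal{P}_i V_{\infty,1}=0$ and the $P_{i0}$ term is simply absent (not an absorbed eigenmode), and for $\mu=1$ the fast homogeneous mode of $A_1$ is the eigenvalue $-1+p_1$ with rate $e^{-(1-p_1)\tau}$, not $-1-p_1$.
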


\begin{proof}
    We prove the statement by induction. First note that the only eigenvalue of $A_{\infty}$ which falls in the interval $(-(1-p_3),0]$ is $0$. Thus $E^1 = E_{0,+}$, which implies
    \[
    V_{\infty, 1} = u_{0,+}(1,0,0,0,0,0,0,0).
    \]
    For $n = 1$, we use \eqref{broken estimate} to obtain
    \begin{equation} \label{n=1 estimate}
        \bigg\|
    \begin{pmatrix}
        \omega_{\tau}\\
        \partial_{\tau} \omega_{\tau}
    \end{pmatrix} - u_{0,+}
    \begin{pmatrix}
        1\\
        0
    \end{pmatrix}
    \bigg\|_{(s)} + \sum_i ( \| \widehat{\omega}_i \|_{(s)} + \|\partial_{\tau} \widehat{\omega}_i\|_{(s)} ) \leq C \langle \tau \rangle^N e^{-(1-p_3)\tau},
    \end{equation}
    which is of the required form for $n = 1$. Now assume that for some $n$, the estimates \eqref{broken estimate} take the form \eqref{hat expansion}. We begin with $\mu = 0$. If $Q_{\alpha \beta}$ denotes the first component of $P_{\alpha \beta}$,
    \[
    \begin{split}
        D_0 \pi(F_{\infty,n}(\tau)) &= \sum_i \left[e^{-2(1-p_i)\tau} \partial_i^2 u_{0,+} + \sum_{k = 1}^{q_{0n}} e^{-(2(1-p_i) + a_{0k})\tau} \partial_i^2 Q_{0k}(\tau)\right]\\
        &\phantom{=} +  2p_1 e^{-2\tau} \partial_1 Q_{10}(\tau) + \sum_{k = 1}^{q_{1n}} 2p_1 e^{-(1-p_1 + a_{1k})\tau} \partial_1 Q_{1k}(\tau) \\
        &\phantom{=} + \sum_{i = 2}^3 \left[ 2p_i e^{-2(1-p_i)\tau} \partial_i Q_{i0}(\tau) + \sum_{k = 1}^{q_{in}} 2p_i e^{-(1-p_i + a_{ik})\tau} \partial_i Q_{ik}(\tau) \right]\\
        &= \sum_{k = 1}^{q} P_k(\tau) e^{-b_k\tau},
    \end{split}
    \]
    where $q$ is an integer, $P_k(\tau)$ are polynomials with coefficients in $C^{\infty}(\mathbb{T}^3)$ and $b_k$ is an increasing sequence of reals with $b_1 = 2(1-p_3)$. To see this note that, since $a_{0 k} > 0$ and $a_{jk} > 1 - |p_j|$ for $j = 1, 2, 3$, the largest exponential on the right hand side of the first equality is $e^{-2(1-p_3) \tau}$. Now we compute the integral inside \eqref{broken estimate},
    \[
    \begin{split}
    \int_0^{\tau} e^{A_0(\tau - r)} 
    &\begin{pmatrix}
        0\\
        D_0 \pi(F_{\infty, n}(r))
    \end{pmatrix} dr\\
    &= \int_0^{\tau} D_0 \pi(F_{\infty, n}(r)) \bigg(
    \begin{pmatrix}
    1/2\\
    0
    \end{pmatrix} +
    e^{-2(\tau-r)} 
    \begin{pmatrix}
    -1/2\\
    1
    \end{pmatrix} \bigg) dr\\
    &= \sum_{k = 1}^q \bigg[ \int_0^{\tau} P_k(r)e^{-b_k r} dr 
    \begin{pmatrix}
        1/2\\
        0
    \end{pmatrix} + e^{-2\tau} \int_0^{\tau} P_k(r) e^{(2-b_k)r} dr
    \begin{pmatrix}
        -1/2\\
        1
    \end{pmatrix}
    \bigg].
    \end{split}
    \]
    Here we can use the fact that $\int_0^{\tau} e^{ar} P^d(r)dr = e^{a\tau} Q^d(\tau) + \alpha$, where $\alpha$ is a function of the spatial variables, and $P^d$ and $Q^d$ are polynomials of degree $d$, to write the result of the integration as
    \[
    \alpha 
    \begin{pmatrix}
        1/2\\
        0
    \end{pmatrix} + \beta e^{-2\tau}
    \begin{pmatrix}
        -1/2\\
        1
    \end{pmatrix}
    + \sum_{k = 1}^q \widetilde{P}_k(\tau) e^{-b_k \tau},
    \]
    for some $\alpha, \beta \in C^{\infty}(\mathbb{T}^3)$ and some polynomials $\widetilde{P}_k(\tau)$ with coefficients in $C^{\infty}(\mathbb{T}^3)$. As functions on $\mathbb{T}^3$, these only depend linearly on the $u_{\mu,\pm}$ and their derivatives. Also note that 
    \[
    e^{A_0\tau} \mathcal{P}_0 V_{\infty, n+1} = 
    \gamma 
    \begin{pmatrix}
        1\\
        0
    \end{pmatrix} + \delta e^{-2\tau}
    \begin{pmatrix}
        \phantom{-}1\\
        -2
    \end{pmatrix}
    \]
    for some $\gamma, \delta \in C^{\infty}(\mathbb{T}^3)$. By putting this information in \eqref{broken estimate} for $\mu = 0$ and using uniqueness of $u_{0,+}$, we obtain that the $(n+1)$-th estimate for $\omega_{\tau}$ has the desired form. The claim about $a_{01}$ comes from $b_1 = 2(1-p_3)$. The results for $\widehat{\omega}_i$ are obtained in a similar way.
\end{proof}

Now that we know the general form of the asymptotic expansion, we continue our analysis by computing the leading order terms for each $\widehat{\omega}_{\mu}$. The leading order behavior for $\omega_{\tau}$ is already given by \eqref{n=1 estimate}. Since $p_1 < 0$, the situation for $\widehat{\omega}_i$ is different for $i = 1$ and $i = 2, 3$. We proceed with $i = 2, 3$. Let $n$ be the first integer such that $n(1-p_3) > 1 - p_i$, then the relevant terms (for the computation) in the $(n-1)$-th estimate are
\[
\begin{split}
    \mathcal{P}_0 F_{\infty, n-1}(\tau) &= \left(u_{0,+} + \sum_{k = 1}^{q_{0(n-1)}} Q_{0k}(\tau) e^{-a_{0k}\tau}, \,*\, \right),\\
    \mathcal{P}_i F_{\infty, n-1}(\tau) &= (0,0),
\end{split}
\]
where the $*$ stands for a quantity which is not relevant for the computation. Now we compute the $n$-th estimate for $\widehat{\omega}_i$,
\[
D_i \pi(F_{\infty, n-1}(\tau)) = 2p_i e^{-(1-p_i)\tau} \partial_i u_{0,+} + \sum_{k = 1}^{q_{0(n-1)}} 2p_i \partial_i Q_{0k}(\tau) e^{-(1 - p_i +a_{0k})\tau}.
\]
Hence
\begin{align*}
&\int_0^{\tau} e^{A_i(\tau - r)} 
\begin{pmatrix}
    0\\
    D_i \pi(F_{\infty, n-1}(r))
\end{pmatrix} dr\\
&= \int_0^{\tau} D_i \pi(F_{\infty, n-1}(r)) \bigg( \frac{1}{2p_i} e^{-(1 + p_i)(\tau - r)}
\begin{pmatrix}
    -1\\
    1+p_i
\end{pmatrix} + \frac{1}{2p_i} e^{-(1-p_i)(\tau-r)} 
\begin{pmatrix}
    1\\
    -1+p_i
\end{pmatrix}
\bigg) dr\\
&= \partial_i u_{0,+} e^{-(1+p_i)\tau} \int_0^{\tau} e^{2p_ir} dr
\begin{pmatrix}
    -1\\
    1+p_i
\end{pmatrix} + \partial_i u_{0,+} e^{-(1-p_i)\tau} \int_0^{\tau} dr 
\begin{pmatrix}
    1\\
    -1+p_i
\end{pmatrix}\\
&\phantom{=} + \sum_{k = 1}^{q_{0(n-1)}} \bigg[ e^{-(1 + p_i)\tau} \int_0^{\tau} \partial_i Q_{0k}(r) e^{(2p_i - a_{0k})r} dr
\begin{pmatrix}
    -1\\
    1 + p_i
\end{pmatrix}\\
&\phantom{=} + e^{-(1-p_i)\tau} \int_0^{\tau} \partial_i Q_{0k}(r) e^{-a_{0k}r} dr
\begin{pmatrix}
    1\\
    -1+p_i
\end{pmatrix}
\bigg]\\
&= \bigg((\partial_i u_{0,+} \tau + \alpha)
\begin{pmatrix}
    1\\
    -1+p_i
\end{pmatrix} + \frac{1}{2p_i} \partial_i u_{0,+}
\begin{pmatrix}
    -1\\
    1+p_i
\end{pmatrix}
\bigg) 
 e^{-(1 - p_i)\tau} + \cdots,
\end{align*}
for some function $\alpha \in C^{\infty}(\mathbb{T}^3)$ which depends linearly on the $u_{\mu,\pm}$ and their derivatives. Here the dots stand for lower order terms. The leading order contribution from $V_{\infty, n}$ is 
\[
e^{A_i \tau} \mathcal{P}_i V_{\infty, n} = u_{i,+} e^{-(1-p_i)\tau}
\begin{pmatrix}
    1\\
    -1+p_i
\end{pmatrix} + \cdots,
\]
(here we know that the function appearing on the right hand side is $u_{i,+}$ since, by choice of $n$, $E_{i,+}$ is a subspace of $E^n$) so we conclude that
\[
P_{i0}(\tau) = \bigg( u_{i,+} + \partial_i u_{0,+} \tau + \alpha \bigg)
\begin{pmatrix}
    1\\
    -1+p_i
\end{pmatrix} +\frac{1}{2p_i} \partial_i u_{0,+}
\begin{pmatrix}
    -1\\
    1+p_i
\end{pmatrix}.
\]
If we let $u_i := u_{i,+} - (1/2p_i) \partial_i u_{0,+} + \alpha$, then
\[
P_{i0}(\tau) = u_i
\begin{pmatrix}
    1\\
    -1+p_i
\end{pmatrix} + \partial_i u_{0,+} 
\begin{pmatrix}
    \tau\\
    (-1+p_i)\tau + 1
\end{pmatrix}.
\]
Note that, by a translation in the space of asymptotic data (as in Example \ref{the example}), $u_i$ can replace $u_{i,+}$ as part of the asymptotic data for the solution. Similarly, we conclude that
\[
P_{10}(\tau) = \bigg( u_{1,-} + \frac{1}{2p_1} \partial_1 u_{0,+} + \alpha \bigg)
\begin{pmatrix}
    1\\
    -1-p_1
\end{pmatrix},
\]
and by letting $u_1 :=  u_{1,-} + (1/2p_1) \partial_1 u_{0,+} + \alpha$,
\[
P_{10}(\tau) = u_1
\begin{pmatrix}
    1\\
    -1-p_1
\end{pmatrix}.
\]
Here $u_1$ also can replace $u_{1,-}$ as part of the asymptotic data. To summarize, we can write the asymptotic expansion \eqref{hat expansion} as
\begin{equation} \label{hat expansion 2}
    \begin{split}
        &\bigg\|
        \begin{pmatrix}
            \omega_{\tau}\\
            \partial_{\tau} \omega_{\tau}
        \end{pmatrix} - 
        u_{0,+} \begin{pmatrix}
            1\\
            0
        \end{pmatrix} - \sum_{k = 1}^{q_{0n}} P_{0k}(\tau) e^{-a_{0k}\tau}
        \bigg\|_{(s)}\\
        &+ \bigg\|
        \begin{pmatrix}
            \widehat{\omega}_1\\
            \partial_{\tau} \widehat{\omega}_1
        \end{pmatrix} - u_1 e^{-(1+p_1)\tau}
        \begin{pmatrix}
            1\\
            -1-p_1
        \end{pmatrix} - \sum_{k = 1}^{q_{in}} P_{ik}(\tau) e^{-a_{ik}\tau}
        \bigg\|_{(s)}\\
        &+ \sum_{i=1}^2 \bigg\|
        \begin{pmatrix}
            \widehat{\omega}_i\\
            \partial_{\tau} \widehat{\omega}_i
        \end{pmatrix} - \bigg[u_i
        \begin{pmatrix}
            1\\
            -1+p_i
        \end{pmatrix} + \partial_i u_{0,+} 
        \begin{pmatrix}
            \tau\\
            (-1+p_i)\tau + 1
        \end{pmatrix}\bigg] e^{-(1-p_i)\tau} - \sum_{k = 1}^{q_{in}} P_{ik}(\tau) e^{-a_{ik}\tau}
        \bigg\|_{(s)}\\
        &\leq C \langle \tau \rangle^N e^{-n(1-p_3)\tau}.
    \end{split}
\end{equation}
Finally, we can go back to the original functions $\omega_i = e^{(1-p_i)\tau} \widehat{\omega}_i$. To do this, first note that the expansion for $\partial_{\tau} \widehat{\omega}_i$ is the $\tau$ derivative of the expansion for $\widehat{\omega}_i$ (see Remark~\ref{2nd component derivative of first}). If we let $(G,\partial_{\tau} G)$ denote the $n$-th approximation for $(\widehat{\omega}_i, \partial_{\tau} \widehat{\omega}_i)$, then
\[
\bigg\|
\begin{pmatrix}
    \widehat{\omega}_i\\
    \partial_{\tau} \widehat{\omega}_i
\end{pmatrix} - 
\begin{pmatrix}
    G\\
    \partial_{\tau} G
\end{pmatrix}
\bigg\|_{(s)} \leq C \langle \tau \rangle^N e^{-n(1-p_3)\tau}.
\]
In the estimate for the second component, we can expand $\partial_{\tau} ( e^{-(1-p_i)\tau} \omega_i )$ to obtain
\[
    \| -(1-p_i) \widehat{\omega}_i + e^{-(1-p_i)\tau} \partial_{\tau} \omega_i + (1-p_i) G - e^{-(1-p_i)\tau} \partial_{\tau}(e^{(1-p_i)\tau} G)\|_{(s)} \leq C \langle \tau \rangle^N e^{-n(1-p_3)\tau}.
\]
Therefore
\[    
\begin{split}
    e^{-(1-p_i)\tau} \| \partial_{\tau} \omega_i - \partial_{\tau} (e^{(1-p_i)\tau} G) \|_{(s)} &\leq C \langle \tau \rangle^N e^{-n(1-p_3)\tau} + (1-p_i) \| \widehat{\omega}_i - G \|_{(s)},\\
    \| \partial_{\tau} \omega_i - \partial_{\tau} (e^{(1-p_i)\tau} G) \|_{(s)} &\leq C \langle \tau \rangle^N e^{[1-p_i-n(1-p_3)]\tau}.
\end{split}
\]
Using this information and \eqref{hat expansion 2}, we can obtain the following result. In the statement below, $u = O(f(\tau))$ means that for every $s \in \mathbb{R}$, there is a constant $C_s$ such that
\[
\|u(\tau)\|_{(s)} \leq C_s |f(\tau)|.
\]

\begin{theorem} \label{omega expansion theorem}
Let $\omega = \omega_{\tau}d\tau + \omega_j dx^j$ be a solution of $\Box_g \omega = 0$, then
\begin{subequations} \label{omega expansion}
    \begin{align}
        \begin{pmatrix}
            \omega_{\tau}\\
            \partial_{\tau} \omega_{\tau}
        \end{pmatrix} &= u_{0,+}
        \begin{pmatrix}
            1\\
            0
        \end{pmatrix} + \sum_{k = 1}^{q_{0n}} P_{0k}(\tau) e^{-a_{0k}\tau} + O(\langle \tau \rangle^N e^{-n(1-p_3)\tau}),\\
        \begin{pmatrix}
            \omega_1\\
            \partial_{\tau} \omega_1
        \end{pmatrix} &= u_1 e^{-2p_1\tau}
        \begin{pmatrix}
            1\\
            -2p_1
        \end{pmatrix} + \sum_{k = 1}^{q_{1n}} P_{1k}(\tau) e^{-a_{1k}\tau} + O(\langle \tau \rangle^N e^{[1-p_1-n(1-p_3)]\tau}), \label{omega 1 expansion}\\
        \begin{pmatrix}
            \omega_i\\
            \partial_{\tau} \omega_i
        \end{pmatrix} &= 
        \begin{pmatrix}
            u_i + \partial_i u_{0,+} \tau\\
            \partial_i u_{0,+}
        \end{pmatrix} + \sum_{k = 1}^{q_{in}} P_{ik}(\tau) e^{-a_{ik}\tau} + O(\langle \tau \rangle^N e^{[1-p_i-n(1-p_3)]\tau})
    \end{align}
\end{subequations}
where $q_{\mu n}$ are non-negative integers, non-decreasing in $n$ with $q_{\mu 1} = 0$; and the $a_{\mu k}$ are positive real numbers that only depend on the $p_i$ and are increasing in $k$. Furthermore, the $P_{\alpha \beta}$ are polynomials in $\tau$ with coefficients in $C^{\infty}(\mathbb{T}^3, \mathbb{R}^2)$, whose components are functions in $C^{\infty}(\mathbb{T}^3)$ which only depend linearly on the $u_i$ for $i = 1,2,3$; on $u_{0,\pm}, u_{1,+}, u_{2,-}$ and $u_{3,-}$; and on their derivatives. Additionally, $a_{11} = \min\{ 2p_1 + 2(1-p_3), 0 \}$ and $ a_{01} = a_{i1} = 2(1-p_3)$ for $i = 2, 3$.
\end{theorem}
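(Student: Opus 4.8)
The plan is to read off \eqref{omega expansion} from the expansion \eqref{hat expansion 2} for $\widehat{\omega} = (\omega_{\tau}, \widehat{\omega}_1, \widehat{\omega}_2, \widehat{\omega}_3)$ by undoing the substitution $\widehat{\omega}_j = e^{-(1-p_j)\tau}\omega_j$. For the $\tau$-component there is nothing to prove, since $\omega_{\tau} = \widehat{\omega}_{\tau}$: the first line of \eqref{omega expansion}, with $a_{01} = 2(1-p_3)$, is precisely the $\mu = 0$ part of \eqref{hat expansion 2}. So all the work lies in the spatial components, which satisfy $\omega_i = e^{(1-p_i)\tau}\widehat{\omega}_i$ for $i = 1, 2, 3$.

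First I would fix $i$ and let $G_i(\tau)$ be the first component of the approximating expression for $\widehat{\omega}_i$ on the left-hand side of \eqref{hat expansion 2}. Multiplying \eqref{hat expansion 2} through by $e^{(1-p_i)\tau}$ gives immediately $\|\omega_i - e^{(1-p_i)\tau}G_i\|_{(s)} \le C\langle\tau\rangle^N e^{[1-p_i-n(1-p_3)]\tau}$. For the velocity component I would use the identity $\partial_{\tau}\omega_i = e^{(1-p_i)\tau}\bigl((1-p_i)\widehat{\omega}_i + \partial_{\tau}\widehat{\omega}_i\bigr)$ together with Remark~\ref{2nd component derivative of first}, which guarantees that the second component of the approximation in \eqref{hat expansion 2} is the $\tau$-derivative of the first; this is exactly the computation already carried out in the paragraph preceding the theorem, and it produces $\|\partial_{\tau}\omega_i - \partial_{\tau}(e^{(1-p_i)\tau}G_i)\|_{(s)} \le C\langle\tau\rangle^N e^{[1-p_i-n(1-p_3)]\tau}$. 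Thus $(\omega_i, \partial_{\tau}\omega_i)$ is approximated, to the claimed order, by the pair $\bigl(e^{(1-p_i)\tau}G_i,\ \partial_{\tau}(e^{(1-p_i)\tau}G_i)\bigr)$.

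The remaining step is to put $e^{(1-p_i)\tau}G_i$ into the shape displayed in \eqref{omega expansion}. For $i = 2, 3$ the factor $e^{(1-p_i)\tau}$ cancels the prefactor $e^{-(1-p_i)\tau}$ of the leading block, turning $u_i\binom{1}{-1+p_i} + \partial_i u_{0,+}\binom{\tau}{(-1+p_i)\tau+1}$ into $\binom{u_i + \partial_i u_{0,+}\tau}{\partial_i u_{0,+}}$ (the velocity entry being obtained by differentiating the scalar $u_i + \partial_i u_{0,+}\tau$), while for $i = 1$ it turns $u_1 e^{-(1+p_1)\tau}\binom{1}{-1-p_1}$ into $u_1 e^{-2p_1\tau}\binom{1}{-2p_1}$. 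Every correction term $P(\tau)e^{-a\tau}$ of \eqref{hat expansion 2} becomes $P(\tau)e^{-(a-(1-p_i))\tau}$, so the new exponents are the old ones shifted down by $1-p_i$; combined with the values given for \eqref{hat expansion} this yields $a_{i1} = 2(1-p_3)$ for $i = 2, 3$ and $a_{11} = \min\{2p_1 + 2(1-p_3),\, 0\}$. Monotonicity of the exponents in $k$, their dependence only on the $p_i$, the polynomial structure of the $P_{\alpha\beta}$ and their linear dependence on the asymptotic data are all inherited directly from \eqref{hat expansion 2}; after the translations introduced before the theorem (absorbing the $\partial_i u_{0,+}$-contributions into $u_i$ for $i = 2, 3$ and into $u_1$ for $i = 1$) the relevant data are the $u_i$ for $i = 1, 2, 3$ together with $u_{0,\pm}$, $u_{1,+}$, $u_{2,-}$ and $u_{3,-}$.

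The argument is essentially bookkeeping, so there is no single hard step; the point that demands the most care is tracking the exponentials through the multiplication by $e^{(1-p_i)\tau}$, and in particular confirming that for $i = 1$ the product $e^{(1-p_1)\tau}e^{-(1+p_1)\tau} = e^{-2p_1\tau}$ is genuinely the dominant term. This holds because every correction exponent appearing in \eqref{hat expansion 2} for $\widehat{\omega}_1$ exceeds $1 - |p_1| = 1 + p_1$, hence after the shift exceeds $2p_1$, so $e^{-a\tau}$ with $a > 2p_1$ is dominated by $e^{-2p_1\tau}$ up to polynomial factors; one checks in the same way that the shifted remainder $e^{[1-p_i-n(1-p_3)]\tau}$ is a genuine error term once $n$ is taken large enough.
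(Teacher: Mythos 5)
Your proposal is correct and follows essentially the same route as the paper: the theorem is obtained exactly by taking the expansion \eqref{hat expansion 2} for $(\widehat{\omega}_\mu,\partial_\tau\widehat{\omega}_\mu)$, undoing the substitution $\omega_i = e^{(1-p_i)\tau}\widehat{\omega}_i$, using the fact that the second component of the approximation is the $\tau$-derivative of the first to control $\partial_\tau\omega_i - \partial_\tau(e^{(1-p_i)\tau}G)$, and then shifting all exponents by $1-p_i$ (which is precisely how the paper arrives at $a_{11}=\min\{2p_1+2(1-p_3),0\}$ and $a_{i1}=2(1-p_3)$). The bookkeeping of the leading blocks, the translated asymptotic data $u_1,u_2,u_3$, and the dominance check for the $i=1$ component all match the paper's argument.
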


Note that the polynomials $P_{\alpha \beta}$ in the expansion above are not exactly the same as in the expansion \eqref{hat expansion 2}, they only coincide in the first component. Also, the numbers $a_{j k}$ for $j = 1,2,3$ are not the same as in \eqref{hat expansion 2}, they are the result of adding $-1+p_j$ to the previous ones.

\subsection{Generic blow up of the energy density} \label{proof of blow up}

Our objective now is to use the expansion \eqref{omega expansion} to obtain a blow up statement concerning the energy density of the electromagnetic field. The constraint equations \eqref{constraint equations} for the initial data at the hypersurface $\mathbb{T}_{\tau_0}^3$ are 
\begin{subequations} \label{constraint eqs initial data}
    \begin{align}
        -e^{2\tau_0} \partial_{\tau} \omega_{\tau}(\tau_0) + \sum_i e^{2p_i \tau_0} \partial_i \omega_i(\tau_0) &= 0,\\
        \sum_i e^{2p_i \tau_0} \partial_i ( \partial_{\tau} \omega_i(\tau_0) - \partial_i \omega_{\tau}(\tau_0) ) &= 0.
    \end{align}
\end{subequations}
Thus a solution $\omega$ of $\Box_g \omega = 0$ has $\diver \omega = 0$, if and only if its initial data satisfies \eqref{constraint eqs initial data}. To obtain constraint equations for the asymptotic data, recall that by Lemma~\ref{propagation of constraints}, $\diver \omega$ is a solution to the scalar wave equation, thus we can apply Theorem~\ref{pde asymptotic theorem} to obtain that there are functions $\alpha_{\infty}, \beta_{\infty} \in C^{\infty}(\mathbb{T}^3)$ such that
\[
\diver \omega = \alpha_{\infty} \tau + \beta_{\infty} + O(\langle \tau \rangle^N e^{-2(1-p_3)\tau}).
\]
Moreover, $\diver \omega$ is uniquely determined by $\alpha_{\infty}$ and $\beta_{\infty}$. In particular, $\diver \omega = 0$ if and only if 
\begin{equation} \label{constraint eqs asymptotic data}
    \alpha_{\infty} = \beta_{\infty} = 0,
\end{equation}
which are the asymptotic constraint equations. In principle, these two equations can be written as equations for the asymptotic data for $\omega$ by expanding $\diver \omega$ using Theorem~\ref{omega expansion theorem}, finding the terms corresponding to $\alpha_{\infty}$ and $\beta_{\infty}$, and setting those to zero. It should be possible to do this explicitly for any particular Kasner spacetime; but it is not possible to do it in general in this way, since it requires detailed knowledge of the lower order terms in Theorem~\ref{omega expansion theorem}, something which is not possible for arbitrary $p_i$. Nonetheless, this will not be an issue for our purposes. Note that the asymptotic constraint equations \eqref{constraint eqs asymptotic data} are linear differential equations in the asymptotic data for $\omega$. We make the following observation regarding the structure of the equations and the function $u_1$. Note that the differential operator $D$ only contains first and second derivatives (see \eqref{D}), so the only place where $u_1$ itself appears in the expansion \eqref{omega expansion} is in the leading order term of \eqref{omega 1 expansion}. In terms of $u_1$, the rest of the expansion can only depend on its derivatives. From this we conclude that the constraint equations \eqref{constraint eqs asymptotic data} involve derivatives of $u_1$, but not $u_1$ itself. This observation will be important below.

We now turn our attention to the energy. The electromagnetic stress-energy tensor is given by
\[
T_{\alpha \beta} = \frac{1}{4\pi} \bigg( F_{\alpha \gamma} F_{\beta}^{\phantom{a} \gamma} - \frac{1}{4} g_{\alpha \beta} F_{\mu \nu} F^{\mu \nu} \bigg),
\]
where $F = d\omega$ is the Faraday tensor. We want to study the behavior of the energy density along past directed timelike geodesics approaching the initial singularity. We prove the following result. Note that we go back to the $t$ time coordinate.

\begin{theorem} \label{generic blow up}
    Let $\mathcal{S} \subset C^{\infty}(\mathbb{T}^3, \mathbb{R}^8)$ be the space of solutions to the constraint equations \eqref{constraint eqs initial data} at $t_0 = 1$, equipped with the subspace topology. Then there is a subset $\mathcal{A} \subset \mathcal{S}$, open and dense in $\mathcal{S}$, such that, if $\omega = \omega_t dt + \omega_j dx^j$ solves $\Box_g \omega = 0$ with initial data 
    \[(\omega_t(1), \ldots,  \omega_3(1), \partial_t \omega_t(1), \ldots, \partial_t \omega_3(1)) \in \mathcal{A},
    \]
    the following holds. Let $T$ be the electromagnetic stress-energy tensor associated to $d\omega$, then for almost every timelike geodesic $\gamma$ (see Remark~\ref{almost every}), the energy density $T(\dot{\gamma}, \dot{\gamma})$ along $\gamma$ blows up like $t^{-(2p_2 + 4p_3)}$ as $t \to 0$.
\end{theorem}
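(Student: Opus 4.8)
The plan is to read off the leading-order behaviour of the energy density from the expansion in Theorem~\ref{omega expansion theorem}, to identify the coefficient of the dominant power of $t$, and then to argue that this coefficient is generically non-zero in a way compatible with the constraint equations \eqref{constraint eqs initial data}. For a past directed unit timelike $v=\dot\gamma$ one has, using $g(v,v)=-1$, the identity
\[
T(v,v)=\frac{1}{4\pi}\Big(g^{\gamma\delta}(\iota_vF)_\gamma(\iota_vF)_\delta+\frac14 F_{\mu\nu}F^{\mu\nu}\Big),\qquad (\iota_vF)_\nu:=v^\mu F_{\mu\nu}.
\]
On a Kasner spacetime the quantities $c_i:=t^{2p_i}\dot x^i$ are constant along $\gamma$ and $\dot t^2=1+\sum_i t^{-2p_i}c_i^2$; since $p_3=\max_i p_i>0$, for $c_3\neq0$ one gets $\dot t=|c_3|t^{-p_3}(1+o(1))$ and $dx^i/dt=(c_i/|c_3|)t^{p_3-2p_i}(1+o(1))$ as $t\to0$, which is integrable near $t=0$ because $p_i\le p_3<(1+p_3)/2$. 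Hence $\gamma$ converges to a point $x_*\in\mathbb T^3$, and on $\{c_3\neq0\}\subset P$ the displacement $x_*-y$ from the base point $y$ depends only on $(c_1,c_2,c_3)$, so $v\mapsto x_*(v)$ is a submersion and in particular pulls measure-zero sets back to measure-zero sets; also $\{c_3=0\}$ has measure zero in $P$.

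Substituting \eqref{omega expansion} into $F=d\omega$, the most singular components as $t\to0$ are $F_{t1}=2p_1u_1\,t^{2p_1-1}(1+o(1))$ and $F_{1i}=O(t^{2p_1}\langle\tau\rangle^{N})$ for $i=2,3$, while $F_{23}\to\partial_2u_3-\partial_3u_2$ and $F_{t2},F_{t3}=O(t^{1-2p_3}\langle\tau\rangle^{N})$ because their leading terms cancel. Contracting with $v$ and using $\dot t\sim|c_3|t^{-p_3}$, the dominant entries of $\iota_vF$ turn out to be $(\iota_vF)_1\sim 2p_1u_1(x_*)\,|c_3|\,t^{2p_1-1-p_3}$ and $(\iota_vF)_2\sim -(\partial_2u_3-\partial_3u_2)(x_*)\,c_3\,t^{-2p_3}$; raising indices with $g^{11}=t^{-2p_1}$, $g^{22}=t^{-2p_2}$ and using $2p_1-2-2p_3=-(2p_2+4p_3)$ (a consequence of $\sum_i p_i=1$) gives
\[
T(\dot\gamma,\dot\gamma)=\frac{c_3^{\,2}}{4\pi}\,\Phi(x_*)\,t^{-(2p_2+4p_3)}\big(1+o(1)\big),\qquad \Phi:=(2p_1)^2u_1^{\,2}+\big(\partial_2u_3-\partial_3u_2\big)^2\ \ge 0 .
\]
Since $g$ is diagonal, $\Phi$ is a genuine sum of squares, so $\Phi(x_*)=0$ forces $u_1(x_*)=0$, and no cancellation can occur. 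One uses the inequalities $p_1<0<p_2\le p_3<1$, $p_2+p_3>1$ and $\sum_i p_i^2=1$ to check that every remaining contribution — the "electric" term $g^{tt}((\iota_vF)_t)^2$ obtained from $\iota_vF\perp v$, the $(\iota_vF)_3$ term, the $\frac14F_{\mu\nu}F^{\mu\nu}$ term, and the remainder $O(\langle\tau\rangle^{N}e^{-n(1-p_3)\tau})$ of \eqref{omega expansion}, which is made $o(t^{-(2p_2+4p_3)})$ by taking $n$ large — is strictly of lower order.

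It remains to define $\mathcal A$. By Theorem~\ref{homeomorphism theorem} the passage from initial data to asymptotic data is a linear homeomorphism, so $d\mapsto u_1(d)$ is continuous into $C^\infty(\mathbb T^3)$; and, as already observed in the text, the constraint equations \eqref{constraint eqs initial data} involve derivatives of $u_1$ but not $u_1$ itself, so $\mathcal S$ is invariant under adding a constant to $u_1$. Put
\[
\mathcal A:=\{\,d\in\mathcal S:\ 0\ \text{is a regular value of}\ u_1(d)\,\}.
\]
This is open, because transversality of $u_1(d)$ to $\{0\}$ is a $C^1$-open condition and $d\mapsto u_1(d)$ is continuous; it is dense, because for $d_0\in\mathcal S$ the line $d_\lambda:=d_0+\lambda e$, where $e$ is the datum whose asymptotic data has $u_1\equiv1$ and all remaining components zero, lies in $\mathcal S$, and $0$ is a regular value of $u_1(d_\lambda)=u_1(d_0)+\lambda$ for almost every $\lambda$ by Sard's theorem, in particular for $\lambda$ arbitrarily small. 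For $d\in\mathcal A$ the set $\{u_1(d)=0\}$ is a smooth hypersurface (or empty), hence $\{\Phi(d)=0\}\subseteq\{u_1(d)=0\}$ has measure zero in $\mathbb T^3$; pulling back by the submersion $x_*$ and adjoining $\{c_3=0\}$, the set of timelike geodesics with $\Phi(x_*)c_3^{\,2}=0$ has measure zero in $P$, and for all other geodesics the displayed coefficient is strictly positive, so $T(\dot\gamma,\dot\gamma)$ blows up exactly like $t^{-(2p_2+4p_3)}$. The globalisation from the potential $\omega$ to $F$ is dealt with by the cohomological argument of Remark~\ref{localisation argument}.

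The main obstacle is the bookkeeping of the second step: one must verify the several exact cancellations among the components of $F$ and of $\iota_vF$, and then use the precise relations between the Kasner exponents to confirm that none of the subleading terms of the $\omega$-expansion — nor the electric part $(\iota_vF)_t$, nor the $F_{\mu\nu}F^{\mu\nu}$ term — competes with the $t^{-(2p_2+4p_3)}$ contribution, while at the same time confirming that the surviving coefficient $\Phi$ really does retain the essentially free datum $u_1$; that last point is precisely what makes $\mathcal A$ simultaneously open, dense in $\mathcal S$, and consistent with the gauge condition $\diver\omega=0$.
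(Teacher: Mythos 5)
Your overall strategy coincides with the paper's: read off the leading behaviour of $T(\dot\gamma,\dot\gamma)$ from Theorem~\ref{omega expansion theorem}, identify the coefficient $\bigl(4p_1^2u_1^2+(\partial_2u_3-\partial_3u_2)^2\bigr)t^{-(2p_2+4p_3)}$, make $0$ a regular value of $u_1$ by a Sard-type perturbation that respects the constraints (since they involve only derivatives of $u_1$), transport this to initial data via Theorem~\ref{homeomorphism theorem}, and show that the geodesics whose limit point lies in $u_1^{-1}(0)$ form a null set. However, there is a genuine gap in your order-counting. You assert that the ``electric'' contribution $g^{tt}\bigl((\iota_vF)_t\bigr)^2$ and the $(\iota_vF)_3$ contribution are strictly of lower order than $t^{-(2p_2+4p_3)}$. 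This is false for a large set of Kasner exponents: the dominant piece of $(\iota_vF)_t$ is $v^3F_{3t}=O(t^{1-4p_3})$ and the dominant piece of $(\iota_vF)_3$ is $v^tF_{t3}=O(t^{1-3p_3})$, so after raising indices each of these contributes a term of order $t^{2-8p_3}$ (and the bound $F_{t3}=O(t^{1-2p_3})$ is generically sharp, since it comes from the nonvanishing subleading terms with $a_{01}=a_{31}=2(1-p_3)$). Since $2-8p_3<-(2p_2+4p_3)$ exactly when $2p_3-p_2>1$ (e.g.\ for all exponents with $p_3$ close to $1$), these terms are then \emph{more} singular than your claimed leading term, and their coefficients involve $F_{t3}$, which is only bounded above, not controlled. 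The proof in the paper hinges on the observation that these two dangerous contributions, i.e.\ the terms involving $F_{t3}$ in $T_{tt}$ and in $T_{33}$, have opposite signs and cancel identically, leaving $\frac{\delta c_2^2+c_3^2}{4\pi}\bigl(4p_1^2u_1^2+(\partial_2u_3-\partial_3u_2)^2\bigr)t^{-(2p_2+4p_3)}$ as the true leading term; without exhibiting this cancellation your argument does not yield any rate in that regime.

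Two smaller points. First, your coefficient $c_3^2\Phi/4\pi$ omits the $\delta c_2^2$ present in the paper in the degenerate case $p_2=p_3=2/3$, where $\dot t\sim\sqrt{c_2^2+c_3^2}\,t^{-p_3}$ and the $(\iota_vF)_3$ component also contributes at leading order; this does not affect the blow-up statement, but it is a sign that the bookkeeping you defer was not actually carried out. Second, the assertion that $v\mapsto x_*(v)$ is a submersion on $\{c_3\neq0\}$ does not follow merely from the displacement depending only on $(c_1,c_2,c_3)$: one needs differentiability of the displacement in the $c_i$, which is precisely what the paper establishes (away from $c_i=0$) by dominated-convergence bounds before applying the implicit function theorem. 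Your conclusion (preimages of null sets are null) can be salvaged more cheaply by a Fubini argument, using that for fixed $c$ the map is a translation in the base point and that the displacement is continuous in $c$, but as written the step is unjustified.
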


\begin{proof}
We start by obtaining some information about the geodesics. Let $\gamma$ be a geodesic. Then
\[
\frac{d}{ds} g(\dot{\gamma}, \partial_i) = g(\dot{\gamma}, \nabla_{\dot{\gamma}} \partial_i) = \frac{1}{2} \mathcal{L}_{\partial_i} g(\dot{\gamma}, \dot{\gamma}) = 0,
\]
since $\partial_i$ is a Killing vector field. Hence
\[
g(\dot{\gamma}, \partial_i) = \dot{x}^i t^{2p_i} = c_i,
\]
where $c_i$ is some constant. If  $\gamma$ is past directed unit timelike, we can use this fact to write $\dot{\gamma}$ as
\begin{equation} \label{tangent vector}
    \dot{\gamma} = -\bigg(1 + \sum_i c_i^2 t^{-2p_i}\bigg)^{1/2} \partial_t + \sum_i c_i t^{-2p_i} \partial_i.
\end{equation}
Hence the energy density measured along $\gamma$ is given by
\[
T(\dot{\gamma}, \dot{\gamma}) = \bigg( 1 + \sum_i c_i^2 t^{-2p_i} \bigg) T_{tt} - \sum_i 2 c_i \bigg( 1 + \sum_j c_j^2 t^{-2p_j} \bigg)^{1/2}t^{-2p_i} T_{ti} + \sum_{ij} c_i c_j t^{-2(p_i + p_j)} T_{ij}.
\]
To obtain the leading order behavior of the energy density, we begin by obtaining the leading order behavior of the components of the Faraday tensor $F = d\omega$. From \eqref{omega expansion}, we get (note the use of $\tau$ time)
\begin{align*}
    F_{\tau 1} &= \partial_{\tau} \omega_1 - \partial_1 \omega_{\tau} = -2p_1 u_1 e^{-2p_1\tau} + O( \langle \tau \rangle^N e^{-\min\{ 2p_1 + 2(1-p_3), 0 \} \tau}),\\
    F_{\tau i} &= \partial_{\tau} \omega_i - \partial_i \omega_{\tau} = O(\langle \tau \rangle^N e^{-2(1-p_3)\tau}),\\
    F_{1 i} &= \partial_1 \omega_i - \partial_i \omega_1 = - \partial_i u_1 e^{-2p_1\tau} + O( \langle \tau \rangle^N e^{-\min\{ 2p_1 + 2(1-p_3), 0 \} \tau}),\\
    F_{2 3} &= \partial_2 \omega_3 - \partial_3 \omega_2 = \partial_2 u_3 - \partial_3 u_2 + O( \langle \tau \rangle^N e^{-2(1-p_3)\tau}).
\end{align*}
Now we look at the stress-energy tensor,
\[
\begin{split}
    T_{\tau \tau} &= \frac{1}{8\pi} \Bigg( \sum_i  e^{2p_i\tau} F_{\tau i}^2 + \sum_{i<j} F_{ij}^2 e^{2(p_i + p_j - 1)} \Bigg),\\
    T_{ii} &= \frac{1}{8\pi}( -e^{2\tau} F_{i \tau}^2 + e^{2p_j\tau} F_{ij}^2 + e^{2p_k\tau}F_{ik}^2\\
    &\phantom{=} + e^{2(1+p_j - p_i)\tau} F_{\tau j}^2 + e^{2(1+p_k - p_i)\tau} F_{\tau k}^2 - e^{2(p_j + p_k - p_i)\tau} F_{jk}^2 ),\\
    T_{\tau i} &= \frac{1}{4\pi} ( e^{2p_j \tau} F_{\tau j} F_{ij} + e^{2p_k\tau} F_{\tau k} F_{ik} ),\\
    T_{ij} &= \frac{1}{4\pi} ( -e^{2\tau} F_{i \tau} F_{j \tau} + e^{2p_k\tau} F_{ik} F_{jk} );
\end{split}
\]
where $i$, $j$ and $k$ are distinct. By noting that $T_{ti} = -e^{\tau}T_{\tau i}$ and $T_{tt} = e^{2\tau}T_{\tau \tau}$, we can now expand $T(\dot{\gamma},\dot{\gamma})$. We find that the leading order terms of $T(\dot{\gamma}, \dot{\gamma})$ have to be of order $e^{(2p_2 + 4p_3)\tau}$ or $e^{(8p_3-2)\tau}$. But the terms of order $e^{(8p_3-2)\tau}$ come from the terms involving $F_{\tau 3}$ in $T_{33}$ and $T_{\tau \tau}$, which happen to have opposite signs. So these terms cancel out identically, and we are left with
\[
T(\dot{\gamma}, \dot{\gamma}) = \frac{\delta c_2^2 + c_3^2}{4\pi} (4p_1^2 u_1^2 + (\partial_2 u_3 - \partial_3 u_2)^2) e^{(2p_2 + 4p_3)\tau} + \cdots
\]
as leading order behavior, where $\delta = 1$ in the case when $p_1 = -1/3$ and ${p_2 = p_3 = 2/3}$, and $\delta = 0$ otherwise.

In order to prove the genericity statement, we will exploit the fact that $u_1^2$ appears in the expression above. Let $\Phi_{\infty}$ be the homeomorphism between asymptotic data and initial data as given in Theorem \ref{homeomorphism theorem}. Remember that asymptotic data is codified in the functions $u_i$ for $i = 1, 2, 3$ and $u_{0,\pm}, u_{1,+}, u_{2,-}, u_{3,-}$. Since we are interested exclusively in $u_1$, we think of the space of asymptotic data as 
\[
C^{\infty}(\mathbb{T}^3, \mathbb{R}^8) = C^{\infty}(\mathbb{T}^3) \times C^{\infty}(\mathbb{T}^3, \mathbb{R}^7),
\]
where the first component corresponds to the function $u_1$. Define the set
\[
\mathcal{C} := \{ \varphi \in C^1(\mathbb{T}^3) : 0 \; \text{is a regular value of} \; \varphi \}.
\]
Then $\mathcal{C}$ is an open subset of $C^1(\mathbb{T}^3)$. Note that the inclusion map
\[
i: C^{\infty}(\mathbb{T}^3) \to C^1(\mathbb{T}^3)
\]
is continuous, hence $i^{-1}(\mathcal{C})$ is open in $C^{\infty}(\mathbb{T}^3)$. Let $\mathcal{S}_{\infty} \subset C^{\infty}(\mathbb{T}^3, \mathbb{R}^8)$ be the space of solutions to the asymptotic constraint equations \eqref{constraint eqs asymptotic data}. Clearly $\mathcal{S}_{\infty} \cap (i^{-1}(\mathcal{C}) \times C^{\infty}(\mathbb{T}^3, \mathbb{R}^7))$ is open in $\mathcal{S}_{\infty}$. We will show that it is also dense in $\mathcal{S}_{\infty}$. Let $\chi = (\chi_1, \chi_2) \in \mathcal{S}_{\infty}$ with $\chi_1 \in C^{\infty}(\mathbb{T}^3)$ and $\chi_2 \in C^{\infty}(\mathbb{T}^3, \mathbb{R}^7)$. By Sard's theorem, the set of $a \in \mathbb{R}$ such that $-a$ is a critical value of $\chi_1$ has measure zero. Hence $\chi_1 + a \in i^{-1}(\mathcal{C})$ for almost every $a$. Moreover, since the constraint equations \eqref{constraint eqs asymptotic data} do not involve $u_1$ itself, but only its derivatives, then $\chi + (a,0) \in \mathcal{S}_{\infty}$ for all $a \in \mathbb{R}$. This proves that $\chi$ is in the closure of  $\mathcal{S}_{\infty} \cap (i^{-1}(\mathcal{C}) \times C^{\infty}(\mathbb{T}^3, \mathbb{R}^7))$ in $\mathcal{S}_{\infty}$. Now we can define
\[
\mathcal{A} := \Phi_{\infty}(\mathcal{S}_{\infty} \cap (i^{-1}(\mathcal{C}) \times C^{\infty}(\mathbb{T}^3, \mathbb{R}^7))).
\]
Note that, since the conditions \eqref{constraint eqs asymptotic data} and \eqref{constraint eqs initial data} are equivalent, we must have $\Phi_{\infty}(\mathcal{S}_{\infty}) = \mathcal{S}$, so that $\mathcal{A} \subset \mathcal{S}$. Moreover, since $\Phi_{\infty}$ is a homeomorphism, $\mathcal{A}$ is open and dense in $\mathcal{S}$. Here we have argued as in \cite[Section 17.3]{hans2019} for obtaining $\mathcal{A}$. If $\omega$ arises from initial data in $\mathcal{A}$, we have that $u_1^{-1}(0)$ is a surface in $\mathbb{T}^3$. 

To complete the blow up statement, we now need to ensure that the set of timelike geodesics approaching this surface as $t \to 0$ has measure zero in the sense of Remark~\ref{almost every}.  

Take a curve $\gamma(\tau) = (e^{-\tau}, x(\tau))$. If $\gamma$ is causal, then
\[
g(\dot{\gamma}, \dot{\gamma}) = -e^{-2\tau} + \sum_i e^{-2p_i \tau} (\dot{x}^i)^2 \leq 0,
\]
implying
\[
|\dot{x}^i| \leq e^{-(1-p_i)\tau}.
\]
Let $\tau_j \leq \tau_k$, then
\begin{equation} \label{causal estimate}
    |x^i(\tau_k) - x^i(\tau_j)| \leq \int_{\tau_j}^{\tau_k} e^{-(1-p_i)s}ds \leq \frac{1}{1-p_i} e^{-(1-p_i)\tau_{j}}.
\end{equation}
This means that if we take a sequence $\tau_k \to \infty$, then $x^i(\tau_k)$ is Cauchy, so it converges to some $q_i$. Since a causal curve can always be parametrized by $\tau$, we have thus shown that $x(\tau)$ converges to some $q \in \mathbb{T}^3$ as $\tau \to \infty$ for all causal curves $\gamma$.

Fix a $t_0$ and consider the hypersurface $\mathbb{T}^3_{t_0}$. We want to consider the map which takes past directed timelike geodesics starting at $\mathbb{T}^3_{t_0}$ to the point in $\mathbb{T}^3$ to which their spatial components converge as $t \to 0$. For that purpose, let $P$ be the set of past directed unit timelike vectors with base point in $\mathbb{T}^3_{t_0}$, then $P$ is a 6 dimensional submanifold of $TM$. Define the map $F: P \to \mathbb{T}^3$ by
\[
F(v) = \lim_{t \to 0} x(\gamma_v(s(t))),
\]
where $\gamma_v$ is the geodesic with initial velocity $v$ and $s$ is proper time. Our objective is to show that $F^{-1}(u_1^{-1}(0))$ has measure zero in $P$.

If $v \in P$, then \eqref{tangent vector} for $\gamma_v$ at $t = t_0$ gives us a parametrization of $v$ by the $c_i$ and the coordinates $x^i$, so that we can use $(c_i, x^i)$ as a coordinate system for $P$. Furthermore, if $\alpha$ is the reparametrization of $\gamma_v$ by the $t$ coordinate, then
\[
\dot{\alpha} = \partial_t - \sum_i \frac{c_i t^{-2p_i}}{\big( 1 + \sum_j c_j^2 t^{-2p_j} \big)^{1/2}} \partial_i.
\]
We can now integrate the spatial components of $\dot{\alpha}$ to obtain an expression for the map $F$, namely
\[
x^i(F(v)) = x_0^i + \int_0^{t_0} \frac{c_i t^{-2p_i}}{\big( 1 + \sum_j c_j^2 t^{-2p_j} \big)^{1/2}} dt,
\]
where $v \in P$ has coordinates $(c_i,x_0^i)$. We shall use this expression to show that $F$ is almost everywhere differentiable. First note that the inequality
\begin{equation} \label{nice inequality}
    \frac{|c_i| t^{-2p_i}}{\big( 1 + \sum_j c_j^2 t^{-2p_j} \big)^{1/2}} \leq t^{-p_i}
\end{equation}
holds for $t > 0$. Since $p_i < 1$, then $t^{-p_i}$ is integrable on $[0,t_0]$ and by Lebesgue's dominated convergence theorem, $F$ is continuous as a function of the $c_i$ variables, hence $F$ is continuous. We now proceed to prove existence of the partial derivatives. We have
\[
\frac{\partial}{\partial c_i} \left( 
\frac{c_i t^{-2p_i}}{\big( 1 + \sum_j c_j^2 t^{-2p_j} \big)^{1/2}} \right) = \frac{t^{-2p_i} (1 + \sum_{j \neq i} c_j^2 t^{-2p_j})}{(1 + \sum_j c_j^2 t^{-2p_j})^{3/2}}.
\]
If $c_i$ happens to be zero, then this expression will not be integrable on $[0,t_0]$ in general. Take an interval $[a,b]$ not containing zero. Then if $a$ and $b$ are positive, we can use \eqref{nice inequality} to obtain
\[
\left| \frac{\partial}{\partial c_i} \left( 
\frac{c_i t^{-2p_i}}{\big( 1 + \sum_j c_j^2 t^{-2p_j} \big)^{1/2}} \right) \right| \leq \frac{1}{a} t^{-p_i},
\]
for all $c_i \in (a,b)$, which is integrable on $[0,t_0]$. There is a similar bound for $a$ and $b$ negative. Thus by differentiating under the integral sign, we obtain that $\partial_{c_i} F_i$ exists for $c_i \in (a,b)$, that is for $c_i \neq 0$. Moreover, the bound implies that $\partial_{c_i} F_i$ is also continuous by the dominated convergence theorem. Now let $k \neq i$. Then
\[
\frac{\partial}{\partial c_k} \left( 
\frac{c_i t^{-2p_i}}{\big( 1 + \sum_j c_j^2 t^{-2p_j} \big)^{1/2}} \right) = -\frac{c_i c_k t^{-2(p_i + p_k)}}{ \big( 1 + \sum_j c_j^2 t^{-2p_j} \big)^{3/2} }.
\]
Again, if $[a,b]$ does not contain zero and $a$ and $b$ are positive, then
\[
\left| \frac{\partial}{\partial c_k} \left( 
\frac{c_i t^{-2p_i}}{\big( 1 + \sum_j c_j^2 t^{-2p_j} \big)^{1/2}} \right) \right| \leq \frac{1}{a} t^{-p_k}
\]
for all $c_i \in (a,b)$. There is again a similar bound for $a$ and $b$ negative. We conclude that $\partial_{c_k} F_i$ exists and is continuous for $c_i \neq 0$. Since 
\[
\frac{\partial F_i}{\partial x^k} = \delta_k^i,
\]
we have shown that $F$ has continuous partial derivatives at every point where none of the $c_i$ are zero, and $dF$ is surjective at each of these points. Denote by $Q$ the subset of $P$ where none of the $c_i$ coordinates are zero. Then $Q$ is open, and $(u_1 \circ F)|_Q$ is a $C^1$ map for which $0$ is a regular value. By the implicit function theorem, we conclude that $(u_1 \circ F)|_Q^{-1}(0)$ is a $C^1$ hypersurface in $P$, in particular it has measure zero. Finally, note that if $v \in P$ and $F(v) \in u_1^{-1}(0)$, then
\[
v \in (u_1 \circ F)|_Q^{-1}(0) \cup (P \setminus Q),
\]
which has measure zero.
\end{proof}

\begin{remark} \label{localisation argument}
    In the discussion above, we studied solutions to Maxwell's equations through the corresponding equations for a potential in the Lorenz gauge. But in general, since the topology of $\mathbb{T}^3$ is non-trivial, we can only find $\omega$ such that $F = d\omega$ locally. We need to justify why our results are still meaningful when $F$ is not globally exact. Since $M = (0,\infty) \times \mathbb{T}^3$ is homotopic to $\mathbb{T}^3$, then the de Rham cohomology groups of $M$ are isomorphic to those of $\mathbb{T}^3$. In particular,
    \[
    H^2(M) \cong \mathbb{R}^3.
    \]
    Moreover, $H^2(M)$ is generated by the classes $[dx^1 \wedge \, dx^2]$, $[dx^1 \wedge \, dx^3]$ and $[dx^2 \wedge \, dx^3]$. Therefore, if $dF = 0$, there are real constants $a$, $b$ and $c$, and a one form $\omega$ such that
    \[
    F = a\, dx^1 \wedge dx^2 + b\, dx^1 \wedge dx^3 + c\, dx^2 \wedge dx^3 + d \omega.
    \]
    Note that for $i \neq j$,
    \[
    \diver (dx^i \wedge dx^j) = 0.
    \]
    Hence $F$ is a solution to Maxwell's equations if and only if $d\omega$ is also a solution. Furthermore, similarly as for the Lorenz gauge, we may assume $\diver \omega = 0$, so that both Theorem~\ref{omega expansion theorem} and Theorem~\ref{generic blow up} apply to $\omega$. We can then use the asymptotics for $\omega$ to find the asymptotics for $F$. In particular, if $T$ is the stress-energy tensor associated with $F$ and $\gamma$ is a past directed unit timelike geodesic, the leading order behavior of the energy density is
    \[
    T(\dot{\gamma}, \dot{\gamma}) = \frac{\delta c_2^2 + c_3^2}{4\pi} (4p_1^2 u_1^2 + (\partial_2 u_3 - \partial_3 u_2 + c)^2) t^{-(2p_2 + 4p_3)} + \cdots.
    \]
    Thus the only difference with the situation in Theorem~\ref{generic blow up} is the appearance of the constant $c$, which does not affect the result. 
\end{remark}

It is also of interest to look into the components $T_{\mu}^{\mu}$ (no summation). We have
\[
\begin{split}
    -T_t^t &\sim T_2^2 = \frac{1}{8\pi}(4p_1^2 u_1^2 + (\partial_2 u_3 - \partial_3 u_2)^2) t^{-2(p_2 + p_3)} + \alpha^2 t^{2-6p_3} + \cdots,\\
    -T_1^1 &\sim T_3^3 = \frac{1}{8\pi}(4p_1^2 u_1^2 + (\partial_2 u_3 - \partial_3 u_2)^2) t^{-2(p_2 + p_3)} - \alpha^2 t^{2-6p_3} + \cdots;
\end{split}
\]
where $\alpha$ is some function which is polynomial in the asymptotic data. Depending on the particular values of the $p_i$, one of the two powers of $t$ above will be dominant. Since we have no information about $\alpha$, using our results about $u_1$, we can only conclude that for $p_2 + p_3 > 3p_3 - 1$, the $T_{\mu}^{\mu}$ blow up like $t^{-2(p_2 + p_3)}$ as $t \to 0$ at almost every $x \in \mathbb{T}^3$. In \cite{goorjian}, the author obtains blow up of $T_{\mu}^{\mu}$ at the same rate for some particular plane wave solutions of Maxwell's equations. Even though our results only confirm that the blow up observed in \cite{goorjian} is generic for $p_2 + p_3 > 3p_3 - 1$, they definitely suggest that blow up at a rate of at least $t^{-2(p_2 + p_3)}$ as $t \to 0$ is generic for every non-flat Kasner spacetime.

\section*{Acknowledgements}

The author would like to thank Hans Ringström for the helpful discussions and the comments on the manuscript, and Oliver Petersen for the helpful discussions. This research was funded by the Swedish Research Council, dnr. 2017-03863 and 2022-03053.

\printbibliography

\end{document}